\newcommand*\Z{\mathbb{Z}}
\newcommand*\N{\mathbb{N}}
\newcommand*\Q{\mathbb{Q}}
\newcommand*\C{\mathbb{C}}
\newcommand*\A{\mathcal{A}}
\newcommand*\B{\mathcal{B}}
\newcommand*\G{\mathcal{G}}
\renewcommand*\S{\mathscr{S}}
\newtheorem{teo}{Theorem}[section]
\newtheorem{prop}[teo]{Proposition}
\newtheorem{cor}[teo]{Corollary}
\newtheorem{lemma}[teo]{Lemma}
\theoremstyle{definition}
\newtheorem{de}[teo]{Definition}
\newtheorem{example}[teo]{Example}
\theoremstyle{remark}
\newtheorem{remark}[teo]{Remark}
\DeclareMathOperator{\Aut}{Aut}
\DeclareMathOperator{\Ext}{Ext}
\DeclareMathOperator{\gr}{gr}
\DeclareMathOperator{\gl}{GL}
\DeclareMathOperator{\Hom}{Hom}
\DeclareMathOperator{\id}{Id}
\DeclareMathOperator{\im}{Im}
\DeclareMathOperator{\M}{M}
\DeclareMathOperator{\pr}{pr}
\DeclareMathOperator{\rad}{Rad}
\DeclareMathOperator{\rk}{rk}
\DeclareMathOperator{\spec}{Spec}
\DeclareMathOperator{\sgn}{sgn}
\DeclareMathOperator{\Tor}{Tor}
\DeclareMathOperator{\T}{T}
\DeclareMathOperator{\Rad}{Rad}
\newcommand{\ie}{i.e.\ }
\newcommand{\eg}{e.g.\ }
\newcommand{\defeq}{\stackrel{\textnormal{def}}{=}}
\begin{document}
\title[Combinatorics of Toric Arrangements]{Combinatorics of Toric Arrangements}
\author[R. Pagaria]{Roberto Pagaria}
\address{Scuola Normale Superiore\\ Piazza dei Cavalieri 7, 56126 Pisa\\ Italia}\email{roberto.pagaria@sns.it}

\begin{abstract}
In this paper we build an Orlik-Solomon model for the canonical gradation of the cohomology algebra with integer coefficients of the complement of a toric arrangement.
We give some results on the uniqueness of the representation of arithmetic matroids, in order to discuss 
how the Orlik-Solomon model depends on the poset of layers.
The analysis of discriminantal toric arrangements permits us to isolate certain conditions under which two toric arrangements have diffeomorphic complements.
We also give combinatorial conditions determining whether the cohomology algebra is generated in degree one.
\end{abstract}
\maketitle

\section*{Introduction}
The aim of this work is to study the combinatorics and the ring structure of the cohomology algebra of toric arrangements.
A toric arrangement is a collection of a finite number of $1$-codimensional subtori in an algebraic torus $(\C^*)^r$. 
Toric arrangements are a natural generalization of hyperplane arrangements. 
Hyperplane and toric arrangements are particular cases of abelian arrangements, but very little is known in this generality: the principal results are about models of the cohomology of the complements (in \cite{MPP91,Bibby,Dupa}).

Let us briefly illustrate the state of the art of toric arrangements.
In 1993 Looijenga, in his article \cite{Looijenga}, used sheaf theory to compute the Betti numbers of the complement of toric arrangements.
Later in \cite{DeConcini} De Concini and Procesi gave a presentation of the cohomology module with complex coefficients using algebraic techniques and pointed out the relation between toric arrangements, partition functions and box splines.
The relation just mentioned is deeply examined in their book \cite{Procesi}, in which they emphasize the connection between hyperplanes and toric arrangements.
The combinatorial aspect and the relation with arithmetic matroids were investigated in \cite{MociTutte2012,BM14,DADDERIO, Lenz}.
The study of homotopy type of toric arrangements appeared for the first time in \cite{Settepanella} and was analyzed in \cite{dAntonio,dAD2012} where the toric Salvetti's complex is studied and the minimality and torsion-freeness of integer cohomology is shown.
A deeper inspection of the Salvetti's complex was made in \cite{Callegaro} (see also the erratum); this leads to a presentation of the cohomology module with integer coefficients.
The wonderful model was described in \cite{Moci12,wonderfulmodel,DCGII} and the formality was proven in \cite{Dupb}.

A purely combinatorial presentation of the ring structure of the cohomology has been given only in few particular cases in \cite{DeConcini} (for totally unimodular arrangements) and in \cite{Lenz} (for weakly multiplicative centred arrangements).
In this article we give a presentation of a gradation of the cohomology ring with rational and integer coefficients.
In the case of rational coefficients the presentation (\cref{thm:main_on_rational}) depends only on the poset of layers (connected components of intersections of subtori).
Dealing with integers is more subtle; we exhibit two toric arrangements with the same poset of layers and different cohomology algebra (\cref{controesempio}).
In fact the presentation that we give (\cref{main_on_integers}) depends on the poset of layers and on another algebraic invariant; these two data permit to reconstruct the characters associated to the hypertori of the arrangements.
The case of centred arrangements is very strictly correlated to arithmetic matroids.
We obtain a uniqueness result about representations of arithmetic matroids (see \cref{teo_uniq_repr}).

The outline of this paper is the following:
after an introduction to fix the notations, we study the Leray spectral sequence of the inclusion of the toric arrangement in the torus.
The spectral sequence for the constant sheaf with integer values collapses at the second page and converges to a gradation of the cohomology ring of the toric arrangement.
In the end of \cref{sect_Leray_ss} we construct a presentation of a bigraded algebra isomorphic to the gradation of the cohomology with integer coefficients.

In \cref{sect_On uniqueness of representability}, we analyze the information codified in the poset of layers and in the arithmetic matroid.
The information codified in the multiplicity function permits to determine the linear relations between the characters.
In the case of surjective arrangements, this implies that the arithmetic matroid has a unique representation or, in other words, the poset of layers determines the arrangement up to torus automorphism.

Combining the results from \cref{sect_Leray_ss} and \ref{sect_On uniqueness of representability}, in \cref{sect_OS_algebra} we can give a completely combinatorial presentation of the gradation of the cohomology ring with rational coefficients.

\Cref{sect_coverings of tori} is devoted to analyze coverings of tori, in order to give a presentation with integer coefficients.
The main result of this section is the definition of coherent element with respect to a poset of layers; the set of coherent elements is in bijection with the set of centred toric arrangements with a fixed poset, up to automorphism of the torus.

In \cref{sect_cohom_int_coeff} we develop the analogous results of \cref{sect_OS_algebra} in the case of integer coefficients.
The information needed to describe the graded ring are codified in the poset of layers together with the coherent element studied in the previous section.

In \cref{sect_example}, we give an example of two toric arrangements with the same poset of connected components of intersections but with different cohomology ring on integers.
This shows that the coherent element associated to the arrangement is necessary to describe the cohomology ring on integers.

This study continues in \cref{sect_discr} with the introduction of discriminantal toric arrangements in a, possibly disconnected, torus.
We give the definition of poset isotopy (roughly speaking it consists in translating the hypertori without changing the poset of layers) and we prove that poset isotopy equivalent arrangements are diffeomorphic.
Moreover we show that not all arrangements with the same characters and poset of layers are poset isotopy equivalent: this property depends on which connected component of the discriminantal arrangement they belong to.

Finally in \cref{sect_gen_deg_one}, we give a purely combinatorial criteria to determine whether the cohomology ring (with rational or integer coefficients) is generated in degree one.

\tableofcontents

\section{Notations}
\label{sect_notation}
Let $T\simeq(\C^*)^r$ be a complex torus of dimension $r$ and $\Lambda$ its character group.
We call \textit{affine hypertorus} the zero locus of $1-a\chi$ where $a\in \C^*$ is a complex number and $\chi \in \Lambda$ is a character.
A \textit{toric arrangement} $\Delta$ is a finite collection of hypertori, corresponding to the data $\Delta=\{(a_e, \chi_e)\}_{e \in E}$, where $E$ is a finite set.
Sometimes, with an abuse of notation, we will write $\chi_e \in \Delta$.
We want to study the complement of the arrangement, defined as $M(\Delta) =T\setminus \bigcup_{e \in E} \mathcal{V}_T (1-a_e \chi_e)$ and determine the ring structure of the cohomology of $M$ (we sometimes omit the dependence on $\Delta$).
A character is \textit{primitive} if it is not a nontrivial multiple of an element in $\Lambda$; a toric arrangement $\Delta$ is \textit{primitive} if all its characters are primitive.
It is always possible to describe the open subset $M(\Delta)$ as the complement of a primitive arrangement $\Delta'$ (\ie $M(\Delta)=M(\Delta')$).
A toric arrangement is \textit{centred} if $a_e=1$ for all $e \in E$.

The main combinatorial object used to study $M$ is the poset $\mathscr{S}$ of connected components of intersections.
Elements in $\mathscr{S}$ are called \textit{layers}, so  the poset $\S$ is often called poset of layers.
Layers are ordered by reverse inclusion, so that $\mathscr{S}$ is a ranked poset with rank function given by the codimension in the torus $T$; we call $\mathscr{S}_k$ the subset of $\mathscr{S}$ of codimension $k$ layers.

We use the Greek letter $\Gamma$ to denote the sublattice of $\Lambda$ generated by the characters $\chi_e$ in the arrangement $\Delta$.
If $\Gamma$ has maximal rank in $\Lambda$ then we say that the arrangement is \textit{essential}.
The study of toric arrangements can be easily reduced to the study of essential arrangements.
For an essential arrangement, the quotient $\Lambda / \Gamma$ is denoted $G$.

A nice property of toric arrangements is that at every point of the torus $T$, the complement $M(\Delta)$ locally looks like the complement of a hyperplane arrangement; for this reason we will use the well-known theory of hyperplane arrangements.
For a general and complete reference, see \cite{orlik-terao}.
In fact, for every point $x$ of the torus $T$, we can define a hyperplane arrangement, associated to $\Delta$, in the tangent space at $x$:
every hypertorus $H$ passing through $x$ has as tangent space a hyperplane in $\T _x T$, that we denote by $H^\upharpoonright$.
We define the hyperplane arrangement
\[ \Delta[x] \defeq \{ H^\upharpoonright \mid H \in \Delta, \, x\in H \} \]

Let $W$ be a layer, we choose a point $x \in W$ and we define the hyperplane arrangement in $\T _x T$ as
\[ \Delta[W] \defeq \{ H^\upharpoonright \mid H \in \Delta, \, H \supseteq W\} \]
The choice of another point in $W$ gives a canonically isomorphic hyperplane arrangement, so the formula above is well defined.
Notice that $\Delta[x]=\Delta[W]$ only for generic $x \in W$, \ie for $x$ in a open dense subset of $W$.

In the setting of hyperplane arrangements there is a natural map called \textit{Brieskorn inclusion}, defined as follows.
Fix a layer $L$ in a hyperplane arrangement $\A$ with poset of intersection $\mathscr{L}$ and let $\A_L$ be the arrangement given by hyperplanes containing $L$.
The complement of the hyperplane arrangement $\A$ will be denoted by $M(\A)$.
The Brieskorn inclusion is the composition
\[ H^k(M(\A_L);\Z) \hookrightarrow \bigoplus_{L\in \mathscr{L}_k } H^k(M(\A_L); \Z) \xrightarrow{\sim} H^k(M(\A);\Z) \]
where the second map is the \textit{Brieskorn isomorphism} (see \cite[Theorem 3.26, p. 65]{orlik-terao} or \cite[Lemma 3, p.27]{Brieskorn}).

\section{The Leray spectral sequence}
\label{sect_Leray_ss}
In this section we state some general results on the Leray spectral sequence, see \cite{Bredon} for a reference.
The case of cohomology with rational coefficients has been studied by Bibby in \cite{Bibby}.
We make use a result appeared for the first time in \cite{Looijenga} to compute the cohomology with integer coefficients of a toric arrangement.
Using the Leray spectral sequence we obtain a nice presentation of a canonical bi-gradation of cohomology algebra of toric arrangements.

Let $j \colon M \hookrightarrow T$ be the natural inclusion, which is a continuous map between topological spaces.
Let $\Z_M$ be the sheaf on $M$ of locally constant functions with values in $\Z$.

We recall the definition of \textit{higher direct image sheaves} for the map $j$ and the sheaf $\Z_M$.
Let us consider the presheaf defined by 
$ U \mapsto \check H^q(j^{-1}(U);\Z_M)$.
The associated sheaf is the $q$-direct image sheaf $R^qj_*\Z_M$.

Since $\Z$ is a ring, the cup product $\check H^q(j^{-1}(U);\Z_M) \otimes \check H^{q'}(j^{-1}(U);\Z_M) \rightarrow \check H^{q+q'}(j^{-1}(U);\Z_M)$ is defined in \v Cech cohomology, for details see \cite[Section II.7]{Bredon}.
The cup product induces the map of sheaves $f_{q,q'}:R^qj_*\Z_M \otimes R^{q'}j_*\Z_M \rightarrow R^{q+q'}j_*\Z_M$.
In the same way we can define $R^qj_*\Q_M$.
We define the maps 
\begin{equation} \label{eq:def_smile}
\smile:\check H^p(T; R^qj_*\Z_M) \otimes \check H^{p'}(T; R^{q'}j_*\Z_M) \rightarrow \check H^{p+p'}(T; R^{q+q'}j_*\Z_M).
\end{equation}

as $(-1)^{p'q}$ times the composition of the cup product in the \v Cech cohomology and $f_{q,q'}$.

The inclusion $j$ defines a natural map in cohomology $H^\bullet(T) \to H^\bullet(M)$ which is injective, so we identify $H^\bullet(T)$ with its image.
We define a increasing filtration $F_\bullet=\{F_i\}_{i \in Z}$ for the cohomology ring $H^\bullet(M)$ by
\[ F_i = H^{\leq i}(M;\Z) \otimes H^\bullet(T;\Z)\]
for $i\geq 0$ and by $F_{-1}=0$.
The graded ring $\gr_{F_\bullet} H^\bullet(M;\Z)$ associated with the filtration $F_\bullet$  is the ring $\bigoplus_{i\geq 0} F_i/F_{i-1}$.

\begin{lemma}[\cite{Bredon}] \label{teo_LeraySS}
There exists a spectral sequence of $\Z$-algebras $E_n^{p,q}(M)$ which converges, as a bi-graded algebra, to $\gr_{F_\bullet}  H^\bullet(M;\Z)$.
The second page of the spectral sequence is
\[
E^{p,q}_2(M)=\check H^p(T; R^qj_*\Z_M)
\]
and the product coincides with the map defined in \eqref{eq:def_smile}.
\end{lemma}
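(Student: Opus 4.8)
The spectral sequence is the Leray spectral sequence of the continuous map $j\colon M\hookrightarrow T$ with coefficients in $\Z_M$; the plan is to run the general construction of \cite{Bredon} (which over $\Q$ is the one used in \cite{Bibby}) while keeping careful track of the multiplicative structure and of the filtration it induces on the abutment. First I would replace $\Z_M$ by a Godement resolution; since $\Z_M$ is a sheaf of rings this resolution carries a cup product compatible with the differential — concretely the sheaf pairings $f_{q,q'}$ of \cref{sect_Leray_ss} — so that $Rj_*\Z_M$ is represented by a complex of $\Z_T$-modules with an associative multiplication. Endowing $Rj_*\Z_M$ with its canonical filtration $\tau_{\leq\bullet}$ yields a filtered complex whose hypercohomology spectral sequence is a spectral sequence of $\Z$-algebras with $E_2^{p,q}=\check H^p(T;\mathcal{H}^q(Rj_*\Z_M))=\check H^p(T;R^qj_*\Z_M)$; as $R^qj_*\Z_M=0$ for $q>r$ the filtration is bounded, so the spectral sequence converges to $\check H^\bullet(T;Rj_*\Z_M)\cong H^\bullet(M;\Z)$ equipped with the decreasing Leray filtration $L^\bullet$ by base degree. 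The product on $E_2$ is the cup product on $\check H^\bullet(T;-)$ induced by the $f_{q,q'}$, twisted by the Koszul sign $(-1)^{p'q}$ that appears when the fibre factor of degree $q$ is commuted past a base cochain of degree $p'$ — that is, exactly the map \eqref{eq:def_smile}.

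What remains — and where the real work lies — is to identify the Leray filtration with $F_\bullet$. Reindexing $L^\bullet$ as the increasing filtration $\tilde F_i:=\bigoplus_n L^{n-i}H^n(M;\Z)$, one has $\gr_i\tilde F=\bigoplus_p E_\infty^{p,i}$, so that $E_\infty$ is already the associated bigraded algebra of $(H^\bullet(M;\Z),\tilde F_\bullet)$; thus the lemma amounts to the equality $\tilde F_\bullet=F_\bullet$. The inclusion $F_i\subseteq\tilde F_i$ I would deduce from three facts: (i) $R^0j_*\Z_M=\Z_T$ — because every point of $T$ has a neighbourhood $U$ with $j^{-1}(U)$ homotopy equivalent to a connected hyperplane-arrangement complement — together with the injectivity of $j^*$ from \cref{sect_notation}, which forces $E_\infty^{\bullet,0}=H^\bullet(T;\Z)$ and hence $\tilde F_0=H^\bullet(T;\Z)$; (ii) $H^k(M;\Z)\subseteq\tilde F_k$, because $\gr^p_LH^k(M;\Z)=E_\infty^{p,k-p}=0$ for $p<0$; and (iii) the multiplicativity of the spectral sequence, which gives $\tilde F_a\cdot\tilde F_b\subseteq\tilde F_{a+b}$. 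Together these yield $F_i=H^{\leq i}(M;\Z)\cdot H^\bullet(T;\Z)\subseteq\tilde F_i$.

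The reverse inclusion $\tilde F_i\subseteq F_i$ is the step I expect to be the main obstacle. I would argue by induction on $i$: assuming $\tilde F_{i-1}=F_{i-1}$, it suffices to show that $F_i$ surjects onto $\gr_i\tilde F=\bigoplus_p E_\infty^{p,i}$, and a short chase — using (ii) and (iii) to see that the summands of $F_i$ coming from $H^{<i}(M;\Z)$ land in $\tilde F_{i-1}$ — identifies the image of $F_i$ with the image of the products $E_\infty^{0,i}\otimes E_\infty^{p,0}\to E_\infty^{p,i}$. So the point is that $E_\infty^{p,q}$ is generated by $E_\infty^{0,q}$ as a module over $E_\infty^{\bullet,0}=H^\bullet(T;\Z)$. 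On the second page this is true: by the computation of the higher direct images — essentially \cite{Looijenga}, and obtainable from the stalkwise identification $(R^qj_*\Z_M)_x\cong H^q(M(\Delta[x]);\Z)$ together with a Brieskorn-type decomposition — one has $R^qj_*\Z_M\cong\bigoplus_{W\in\S_q}(\iota_W)_*\underline{H^q(M(\Delta[W]);\Z)}$, a sum of constant sheaves on the layers, whence $E_2^{p,q}=\bigoplus_{W\in\S_q}H^p(W;\Z)\otimes H^q(M(\Delta[W]);\Z)$ with module structure induced by the restrictions $H^\bullet(T;\Z)\to H^\bullet(W;\Z)$, and these are surjective because each layer $W$ is a coset of a subtorus of $T$. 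Propagating this surjectivity from $E_2$ to $E_\infty$ is the delicate point; it becomes immediate once one knows that the spectral sequence degenerates at $E_2$ — equivalently, that $H^\bullet(M;\Z)$ is torsion-free with Betti numbers matching the $E_2$-page, cf. \cite{Looijenga,dAD2012} — so I would establish (or cite) this degeneration before closing the argument, the alternative being a direct check that the permanent cocycles of $E_2^{0,q}$ already generate, using that the higher differentials are $H^\bullet(T;\Z)$-linear and vanish on $E_2^{\bullet,0}$.
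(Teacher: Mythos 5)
Your construction of the spectral sequence itself (Godement resolution, canonical filtration, multiplicativity, identification of $E_2$ and of the sign in \eqref{eq:def_smile}) matches what the paper does by citing \cite{Bredon}, and your argument is correct; the genuine difference is in how the abutment filtration is identified with $F_\bullet$. The paper disposes of this in one line: it identifies the Leray spectral sequence with the first spectral sequence of a \v Cech--Godement double complex and asserts, citing Bredon's appendix, that the column filtration of that double complex coincides with $F_\bullet$ --- no use is made of the structure of the $E_2$-page or of degeneration. You instead prove the equality of filtrations directly: the easy inclusion $F_i\subseteq\tilde F_i$ from $R^0j_*\Z_M=\Z_T$, injectivity of $j^*$ and multiplicativity, and the hard inclusion by induction, reduced to the statement that $E_\infty^{p,q}$ is generated by $E_\infty^{0,q}$ over $E_\infty^{\bullet,0}=H^\bullet(T;\Z)$, which you verify on $E_2$ (using \cref{lemma_Bibby} and surjectivity of $H^\bullet(T;\Z)\to H^\bullet(W;\Z)$ for a coset $W$ of a subtorus) and transport to $E_\infty$ via degeneration. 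What your route buys is an actual proof of the filtration identification, which the paper's citation of Bredon does not really supply in this explicit form; what it costs is a forward dependence on degeneration at $E_2$, which in the paper only appears later (\cref{lemma_SSdegenerate}, quoted from \cite{Callegaro}). This is not circular --- Callegaro's theorem (or $\Q$-degeneration plus torsion-freeness and a rank count, since $E_2$ is a free $\Z$-module) is independent of the present lemma --- but it does restructure the logic: as you set it up, \cref{teo_LeraySS} is no longer a purely formal consequence of the Leray machinery but already uses the main topological input, whereas the paper keeps that input confined to \cref{lemma_SSdegenerate} and \cref{main_on_cohomology}. If you want to stay closer to the paper's division of labour, your suggested alternative (checking directly that permanent cocycles of $E_2^{0,q}$ generate, using $H^\bullet(T;\Z)$-linearity of the differentials) would be the thing to carry out.
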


\begin{proof}
The existence and the convergence of the spectral sequence are proven in \cite[IV, Theorem 6.1]{Bredon}.
The cup product in Leray spectral sequence is described in \cite[IV, section 6.8]{Bredon}.

The limit of the spectral sequence is a graded ring associated with a filtration of $H^\bullet(M;\Z)$ that can be determine as follows.
The Leray spectral sequence can be identified to the first (or horizontal) spectral sequence of an appropriate double complex.
The filtration in the double complex is described in \cite[A, section 2]{Bredon} and coincides with $F_\bullet$.
\end{proof}


From now on, let $j:M \rightarrow T$ be the open inclusion of complement of a toric arrangement in the corresponding torus, so the equality $j_* \Z_M= \Z_T$ holds.
The higher direct image sheaves $R^qj_* \Z_M$ and $R^qj_* \Q_M$ has been partially described in \cite{Looijenga} and in \cite{Bibby}, respectively.
The analogous of the following lemma for the sheaf $R^qj_*\Q_M$ has been proven in \cite[Lemma 3.1]{Bibby}.
We adapt the proof of \cite{Bibby} in order to study the cup product structure in the case of integer coefficients.

\begin{lemma}\label{lemma_Bibby}
Let $i_W$ be the inclusion $W \hookrightarrow T$ for $W\in \S$.
For all natural numbers $q$ there exists an isomorphism of sheaves:
\[\varphi_q: \bigoplus_{\rk W=q} (i_W)_* \Z_W \otimes_\Z H^q(M(\Delta[W]);\Z) \stackrel{\sim}{\longrightarrow} R^q j_* \Z_M \]
\end{lemma}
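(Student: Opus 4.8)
The plan is to follow the proof of the rational analogue \cite[Lemma 3.1]{Bibby}, the single additional ingredient needed to work over $\Z$ being that Brieskorn's isomorphism
\[ \bigoplus_{L\in\mathscr{L}_q} H^q(M(\A_L);\Z)\xrightarrow{\ \sim\ } H^q(M(\A);\Z) \]
already holds with integer coefficients \cite[Theorem 3.26, p. 65]{orlik-terao}. Both the source and the target of $\varphi_q$ are constructible sheaves on $T$, locally constant along the locally closed strata $W^\circ\defeq W\setminus\bigcup_{W'\in\S,\,W'\subsetneq W}W'$, so it will be enough to produce a morphism $\varphi_q$ and to check that it induces an isomorphism on every stalk.

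First I would compute the stalks of $R^qj_*\Z_M$. Since this sheaf is the sheafification of $U\mapsto\check H^q(U\cap M;\Z)$, its stalk at $x\in T$ is $\varinjlim_{U\ni x}\check H^q(U\cap M;\Z)$; taking $U$ a small ball around $x$ and using the local structure recalled in \cref{sect_notation} (namely that $U\cap M$ is then homotopy equivalent to the complement $M(\Delta[x])$ of the central hyperplane arrangement in $\T_xT$), this stalk equals $H^q(M(\Delta[x]);\Z)$. Next I would check that, for $x\in W$, one has $\Delta[W]\subseteq\Delta[x]$, that $L_W\defeq\bigcap_{H\supseteq W}H^\upharpoonright$ is a flat of $\Delta[x]$ of rank $q$ when $\operatorname{rk}W=q$ (using that the tangent hyperplane of the hypertorus $\mathcal V_T(1-a_e\chi_e)$ is $\ker d\chi_e$, so that the rank of $L_W$ is the codimension of $W$), that $\Delta[x]_{L_W}=\Delta[W]$, and that $L\mapsto W$ is a bijection between the rank-$q$ flats of $\Delta[x]$ and the layers $W\in\S_q$ through $x$ (with $\Delta[x]=\Delta[W]$ precisely when $x\in W^\circ$). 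Feeding this into Brieskorn's decomposition for $\A=\Delta[x]$ produces a canonical isomorphism
\[ (R^qj_*\Z_M)_x\ \cong\ \bigoplus_{W\in\S_q,\ x\in W}H^q\bigl(M(\Delta[W]);\Z\bigr), \]
whose right-hand side is exactly the stalk at $x$ of $\bigoplus_{\operatorname{rk}W=q}(i_W)_*\Z_W\otimes_\Z H^q(M(\Delta[W]);\Z)$, because $(i_W)_*\Z_W$ has stalk $\Z$ on $W$ and $0$ elsewhere.

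It remains to assemble these identifications into an honest morphism of sheaves. For each $W\in\S_q$ I would construct a map $(i_W)_*\Z_W\otimes_\Z H^q(M(\Delta[W]);\Z)\to R^qj_*\Z_M$ by sending, over a small ball $U$ meeting $W$ in a connected piece of $W^\circ$, a class $\alpha\in H^q(M(\Delta[W]);\Z)$ to its image under the Brieskorn inclusion $H^q(M(\Delta[W]);\Z)=H^q(M(\Delta[x]_{L_W});\Z)\hookrightarrow H^q(M(\Delta[x]);\Z)=\check H^q(U\cap M;\Z)$ for $x\in U$ — equivalently, to the pullback of $\alpha$ along the inclusion of complements $M(\Delta[x])\hookrightarrow M(\Delta[W])$. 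Because the Brieskorn inclusion is natural and $\Delta[W]$ (hence $H^q(M(\Delta[W]))$) is canonically independent of the chosen point of $W$, these maps are compatible under shrinking $U$ and under moving the base point, so they glue to a morphism of sheaves; summing over $W\in\S_q$ yields $\varphi_q$, which is an isomorphism by the stalk computation above.

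I expect the delicate part to be precisely this globalization: one must verify that the local Brieskorn inclusions really do glue, i.e. that no monodromy appears along the strata $W^\circ$ (here the canonical nature of $\Delta[W]$ is what is used) and that the identifications stay consistent at the non-generic points, where $\Delta[x]$ properly contains $\Delta[W]$ and several strata meet. This is the heart of the argument of \cite[Lemma 3.1]{Bibby} over $\Q$, and since the Brieskorn isomorphism is already defined over $\Z$, the same reasoning should transfer to integer coefficients with only the evident, routine changes.
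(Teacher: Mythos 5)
Your proposal is correct and follows essentially the same route as the paper: compute the stalks of $R^qj_*\Z_M$ via the local identification of $U\cap M$ with $M(\Delta[x])$ and the integral Brieskorn decomposition, build the morphism from each summand $(i_W)_*\Z_W\otimes H^q(M(\Delta[W]);\Z)$ by pulling back along the inclusions of complements (i.e.\ the Brieskorn inclusions) over small neighbourhoods of generic points of $W$, glue, and conclude by the stalkwise check. The only difference is presentational: you dwell a bit more on the gluing/monodromy issue, which the paper dispatches by noting that $\Delta[W]$ is canonically defined independently of the chosen point.
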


\begin{proof}
Recall that the sheaf $R^qj_* \Z_M$ is the sheafification of the presheaf $P$, defined by:
\[ P(U) = \check H^q(j^{-1}(U); \Z_M)\]
Let $x\in T$ be a point of the torus and denote with $W_x$ the smallest (with respect to inclusions) layer containing the point $x$.
There exists a neighborhood $V_x \subset T$ of $x$ which is diffeomorphic to the tangent space $\T _x T$ and the restriction from $V_x \cap M$ into $M(\Delta[x])$ is an isomorphism.
By construction we have $M(\Delta[x])=M(\Delta[W_x])$, so the stalk of  $R^qj_* \Z_W$ at $x$ is 
\[ (R^qj_* \Z_M)_x = H^q(M(\Delta[W_x]);\Z) \simeq \bigoplus_{\substack{\rk W=q \\ x \in W}} H^q(M(\Delta[W]);\Z)\]
where the last isomorphism is the Brieskorn isomorphism, see \cite[Theorem 3.26, p. 65]{orlik-terao} or \cite[Lemma 3, p.27]{Brieskorn}.
We define a sheaf on $T$, $\epsilon_W= (i_W)_* \Z_W \otimes_\Z H^{\rk W} (M(\Delta[W]);\Z)$,  for all $W\in \S$.
Let $n(U,W)$ be the set of connected components of $U\cap W$, for every open set $U\subseteq T$ we have:
\[ \epsilon_W(U)= \Z^{\oplus n(U,W)} \otimes_\Z H^{\rk W}(M(\Delta[W]);\Z) \]
For every $x \in T$ generic in $W$ and for every open subset $U \subseteq V_x$, we can define a map 
\[ H^{\rk W}(M(\Delta[W]);\Z) \rightarrow H^{\rk W}(U \cap M; \Z) \]
as the pullback of the natural inclusion $U \cap M \hookrightarrow M(\Delta[W_x])$.
These maps glue to a global map of sheaves $\varphi_W: \epsilon_W \rightarrow R^{\rk W}j_* \Z_M$.
Let $\varphi_q$ be the direct sum map from $\epsilon_q=\oplus_{\rk W=q} \epsilon_W$ into $R^qj_*\Z_M$.

We show that $\varphi_q$ is the desired isomorphism by checking the maps on the stalks.
In fact, for $x \in T$:
\[(\varphi_q)_x: (\epsilon_q)_x = \bigoplus_{\substack{\rk W=q \\ x \in W}}H^q(M(\Delta[W]);\Z) \rightarrow H^q(M(\Delta[W_x]);\Z)=(R^qj_* \Z_M)_x \]
is the Brieskorn isomorphism.
\end{proof}

In order to study the product map $f_{q,q'}$, we introduce the map 
\[b_{W,W',L}\colon \epsilon_W \otimes \epsilon_{W'} \to \epsilon_{L}\]
which is defined as follows:
if $L$ is not a connected component of $W\cap W'$
then is the zero map, otherwise we set:
\[b_{W,W',L}((\alpha \otimes a)\otimes (\gamma \otimes c))= (\alpha_{|L} \gamma_{|L}) \otimes (b_{W,L}(a) \cup b_{W',L}(c) )\]
where $b_{W,L}$ is the Brieskorn inclusion:
\[H^{\rk W}(M(\Delta[W]);\Z)\simeq H^{\rk W}(M(\Delta[L]_W);\Z) \hookrightarrow H^{\rk W}(M(\Delta[L]);\Z).\]
Now we can consider the direct sum map 
\[b_{q,q'}= \bigoplus_{\substack{\rk W=q \\ \rk W'=q' \\ \rk L= q+q'}} b_{W,W',L} \colon \epsilon_q \otimes \epsilon_{q'} \to \epsilon_{q+q'}.\]

\begin{lemma}\label{lemma_prod}
The isomorphism $\varphi$ of \cref{lemma_Bibby} is compatible with $f_{q,q'}$ and $b_{q,q'}$, i.e the diagram below commutes.
\begin{center}
\begin{tikzpicture}[scale=1]
\node (xa1) at (0,0) {$R^q j_* \Z_M \otimes_\Z R^{q'} j_* \Z_M $};
\node (xa2) at (4,0) {$R^{q+q'} j_* \Z_M $};
\node (x1) at (0,2) {$\epsilon_q \otimes \epsilon_{q'}$};
\node (x2) at (4,2) {$\epsilon_{q+q'}$};
\draw[->] (xa1) to node[above]{$f_{q,q'}$} (xa2);
\draw[->] (x1) to node[left]{$\varphi_q \otimes \varphi_q'$} (xa1);
\draw[->] (x2) to node[left]{$\varphi_{q+q'}$} (xa2);
\draw[->] (x1) to node[above]{$b_{q,q'}$} (x2);
\end{tikzpicture}
\end{center}
\end{lemma}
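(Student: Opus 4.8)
The strategy is to verify the square on stalks and then to recognise the resulting statement as the multiplicativity of the Brieskorn decomposition of a hyperplane arrangement. Since $\varphi_q$, $\varphi_{q'}$ and $\varphi_{q+q'}$ are isomorphisms of sheaves by \cref{lemma_Bibby}, while $f_{q,q'}$ and $b_{q,q'}$ are morphisms of sheaves, it suffices to prove that the induced square of stalks commutes at every $x\in T$. In the notation of the proof of \cref{lemma_Bibby}, write $\A\defeq\Delta[x]=\Delta[W_x]$. The stalk of $R^qj_*\Z_M$ at $x$ is $H^q(M(\A);\Z)$, and on these stalks $f_{q,q'}$ induces the cup product of the ring $H^\bullet(M(\A);\Z)$, because the \v{C}ech cup product commutes with the direct limit and the restrictions defining the stalk. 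On the other side $(i_W)_*\Z_W$ has stalk $\Z$ at $x$ when $x\in W$ and $0$ otherwise, and the restriction maps $(i_W)_*\Z_W\to(i_L)_*\Z_L$ induce the identity of $\Z$ on stalks at any $x\in L\subseteq W$; hence $(\epsilon_q)_x=\bigoplus_{\rk W=q,\,x\in W}H^q(M(\Delta[W]);\Z)$, the stalk of $\varphi_q$ at $x$ is the Brieskorn isomorphism, and the stalk of $b_{W,W',L}$ at $x$ is the map $a\otimes c\mapsto b_{W,L}(a)\cup b_{W',L}(c)$ if $x\in L$ and $L$ is a connected component of $W\cap W'$ (so that necessarily $x\in W\cap W'$ and $L$ is the component through $x$), and $0$ otherwise.

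I would then use the standard local dictionary between $\Delta$ near $x$ and the central hyperplane arrangement $\A$: sending a layer $W\ni x$ to its tangent space $W^\upharpoonright$ at $x$ is a rank-preserving isomorphism from $\{W\in\S:x\in W\}$ onto the intersection poset $\mathscr{L}$ of $\A$; it identifies $\Delta[W]$ with the subarrangement $\A_{W^\upharpoonright}$, and, when $L$ is the component of $W\cap W'$ through $x$, it identifies $b_{W,L}$ with the Brieskorn inclusion of $\A_{L^\upharpoonright}$ at its flat $W^\upharpoonright$. After this translation the stalk square becomes the following purely combinatorial statement about $\A$: for $X,Y\in\mathscr{L}$, setting $Z\defeq X\cap Y$ and writing $\beta_X\colon H^{\rk X}(M(\A_X))\hookrightarrow H^\bullet(M(\A))$ for the Brieskorn inclusion, one has $\beta_X(a)\cup\beta_Y(c)=\beta_Z\bigl(b_{X,Z}(a)\cup b_{Y,Z}(c)\bigr)$ if $\rk Z=\rk X+\rk Y$, and $\beta_X(a)\cup\beta_Y(c)=0$ otherwise.

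Finally I would establish this statement through the Orlik--Solomon isomorphism $H^\bullet(M(\A);\Z)\cong A(\A)$ of \cite{orlik-terao}: under it the Brieskorn decomposition is the decomposition $A(\A)=\bigoplus_{X\in\mathscr{L}}A_X$ by flats, $A_X$ being spanned by the monomials $\bar e_S$ with $S$ a set of hyperplanes such that $\bigcap_{H\in S}H=X$ and $|S|=\rk X$; the Brieskorn inclusions $\beta_X$ and $b_{X,Z}$ are, on monomials, the inclusions $\bar e_S\mapsto\bar e_S$, and the cup product sends $\bar e_S\otimes\bar e_{S'}$ to $\pm\,\bar e_{S\cup S'}$. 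Hence the square commutes on monomials, and in the non-transversal case $\rk Z<\rk X+\rk Y$ every monomial $S\cup S'$ that arises is dependent, so $\bar e_{S\cup S'}=0$ by the standard fact that $\bar e_U=0$ whenever $U$ is dependent --- itself immediate from the Orlik--Solomon relation $\sum_{i=0}^{k}(-1)^i\bar e_{C\setminus H_i}=0$ attached to a circuit $C=\{H_0,\dots,H_k\}\subseteq U$. I expect the only genuinely technical points to be the bookkeeping of the signs coming from the \v{C}ech cup product and from reordering the factors in $\bar e_{S\cup S'}$ --- which must agree because they are the graded-commutativity signs of $H^\bullet(M(\A);\Z)$ --- together with the verification that the local dictionary really carries $b_{W,L}$ to a Brieskorn inclusion of $\A_{L^\upharpoonright}$.
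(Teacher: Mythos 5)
Your proof is correct, and its skeleton coincides with the paper's: both arguments reduce the commutativity of the square of sheaf morphisms to a stalkwise check at each $x\in T$, where every map becomes a statement about the local central hyperplane arrangement $\Delta[W_x]$ and its Brieskorn maps. The difference is how that final identity is established. The paper disposes of it in one line by invoking the compatibility $b_{L,W_x}\bigl(b_{W,L}(a)\cup b_{W',L}(c)\bigr)=b_{W,W_x}(a)\cup b_{W',W_x}(c)$, which is pure functoriality: the Brieskorn maps are pullbacks along the open inclusions $M(\Delta[W_x])\subseteq M(\Delta[L])\subseteq M(\Delta[W])$, and pullback is a ring homomorphism compatible with composition. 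You instead transport everything through the Orlik--Solomon presentation and check the identity on monomials using the flat decomposition $A(\A)=\bigoplus_{X}A_X$; this is longer and ultimately relies on the naturality of the Orlik--Solomon isomorphism under subarrangement inclusions (which is exactly the functoriality the paper uses directly), but it buys something the paper leaves implicit: an explicit proof that $\beta_X(a)\cup\beta_Y(c)=0$ when $\rk(X\cap Y)<\rk X+\rk Y$, i.e. that the bottom path of the square vanishes at points $x$ where the component of $W\cap W'$ through $x$ has rank smaller than $q+q'$ (this also follows immediately from $H^{q+q'}(M(\Delta[L]);\Z)=0$ for $\rk L<q+q'$ together with the same functoriality). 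So there is no gap: same overall strategy, with the key Brieskorn identity verified combinatorially in the Orlik--Solomon algebra rather than by functoriality of pullbacks.
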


\begin{proof}
Let $U\subset T$ be an open set;
it is sufficient to show that $f_{q,q'} \circ \varphi_q \otimes \varphi_{q'}$ and $\varphi_{q+q'} \circ b_{q,q'}$ agree on the generators of $\epsilon_q \otimes \epsilon_{q'}(U)$.
Let $(\alpha \otimes a)\otimes (\gamma \otimes c)$ be an element of the ring $\epsilon_W \otimes \epsilon_{W'}(U)$ and $x$ a point of the torus $T$: we check the equality on the stalk at $x$.
If $x$ is not in $W \cap W'$ then $[(\alpha \otimes a)\otimes (\gamma \otimes c)]_x$ is zero and the assertion is obvious.
Otherwise, let $L$ be the connected component of $W \cap W'$ containing $x$.
From the fact that 
\[b_{L,W_x}\circ (b_{W,L} (a) \cup b_{W',L}(c))= b_{W,W_x} (a) \cup b_{W',W_x}(c)\]
we have that both stalks are $\alpha_{x}\gamma_x b_{W,W_x} (a) \cup b_{W',W_x}(c)$.
\end{proof}

In order to provide a description of the product structure in $\gr_{F_\bullet} H^\bullet(M;\Z)$ we introduce the following definition.

\begin{de}\label{prod}
Let $A^{\bullet,\bullet}(\Delta)$ be the bi-graded algebra whose homogeneous components are 
\[A^{p,q} \defeq \bigoplus_{\rk W =q} H^p(W;\Z) \otimes_\Z H^q(M(\Delta[W]);\Z).\]
The product map $\odot$ is defined on the generators $\alpha \otimes a \in H^p(W;\Z) \otimes_\Z H^{\rk W} (M(\Delta[W]);\Z)$ and $\gamma \otimes c  \in H^{p'}(W';\Z) \otimes_\Z H^{\rk W'}(M(\Delta[W']);\Z)$ by the formula:
\[[(\alpha \otimes a) \odot (\gamma \otimes c)]_L= (i_{W,L}^* \alpha \cup i_{W',L}^* \gamma) \otimes (b_{W,L}(a) \cup b_{W',L}(b)) \]
where $L$ is a connected components of $W\cap W'$ and $i_{W,L}$ (respectively $i_{W',L}$) the inclusion of $L$ in $W$ (respectively in $W'$).
\end{de}

\begin{teo}\label{teo_second_page}
The second page of the Leray spectral sequence of \Cref{teo_LeraySS} is isomorphic as a bi-graded algebra to $A^{\bullet,\bullet}$.
\end{teo}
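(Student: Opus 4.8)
The plan is to compute the $E_2$ page using \Cref{lemma_Bibby} together with standard facts about the cohomology of a torus and then match the multiplication. First I would identify $E_2^{p,q}(M) = \check H^p(T; R^q j_* \Z_M)$ with $A^{p,q}$ additively. Using the isomorphism $\varphi_q$ of \cref{lemma_Bibby}, we have $R^q j_* \Z_M \cong \bigoplus_{\rk W = q} (i_W)_* \Z_W \otimes_\Z H^q(M(\Delta[W]);\Z)$, so by additivity of sheaf cohomology,
\[
\check H^p(T; R^q j_* \Z_M) \cong \bigoplus_{\rk W = q} \check H^p\bigl(T; (i_W)_* \Z_W\bigr) \otimes_\Z H^q(M(\Delta[W]);\Z).
\]
Since $i_W\colon W \hookrightarrow T$ is a closed embedding (each layer $W$ is a closed subtorus translate, hence $i_W$ is proper with fibers points), the Leray spectral sequence for $i_W$ degenerates and gives $\check H^p(T; (i_W)_* \Z_W) \cong \check H^p(W; \Z) = H^p(W;\Z)$. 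Combining, $E_2^{p,q}(M) \cong \bigoplus_{\rk W = q} H^p(W;\Z) \otimes_\Z H^q(M(\Delta[W]);\Z) = A^{p,q}$, which is the additive statement.

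Next I would check that this additive isomorphism is multiplicative. The product on $E_2$ is the map $\smile$ of \eqref{eq:def_smile}, which is (up to the Koszul sign $(-1)^{p'q}$) the cup product in $\check H^\bullet(T;-)$ composed with the coefficient pairing $f_{q,q'}$. By \cref{lemma_prod}, under $\varphi$ the map $f_{q,q'}$ corresponds to $b_{q,q'}$, whose $(W,W',L)$-component sends $(\alpha\otimes a)\otimes(\gamma\otimes c)$ to $(\alpha_{|L}\gamma_{|L})\otimes(b_{W,L}(a)\cup b_{W',L}(c))$ when $L$ is a connected component of $W\cap W'$ and to zero otherwise. It remains to see that, after passing through the isomorphisms $\check H^p(T;(i_W)_*\Z_W)\cong H^p(W;\Z)$, the cup product in $\check H^\bullet(T;-)$ paired via $b_{W,W',L}$ becomes exactly the restriction-and-multiply operation $i_{W,L}^*\alpha\cup i_{W',L}^*\gamma$ on the cohomology of the layer $L$. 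This is a compatibility of the cup product with the pushforward functors $(i_W)_*$: concretely, the external cup product $\check H^p(T;(i_W)_*\Z_W)\otimes\check H^{p'}(T;(i_{W'})_*\Z_{W'})\to\check H^{p+p'}(T;(i_W)_*\Z_W\otimes(i_{W'})_*\Z_{W'})$ followed by the canonical map $(i_W)_*\Z_W\otimes(i_{W'})_*\Z_{W'}\to(i_L)_*\Z_L$ (supported on $W\cap W'$, nonzero only on components $L$) is identified with $(i_{W,L}^*\alpha)\cup(i_{W',L}^*\gamma)\in H^{p+p'}(L;\Z)$. This last point I would verify by a local/stalk computation, exactly as in \cref{lemma_prod}: it suffices to check agreement on stalks, and at a point $x\in L$ both sides reduce to the cup product of the restricted classes in the cohomology of a contractible neighborhood intersected with the relevant layers.

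The main obstacle is bookkeeping the signs and the several layers of identifications simultaneously: the Koszul sign $(-1)^{p'q}$ built into $\smile$, the sign conventions in the \v Cech cup product, and the sign that appears in the Brieskorn isomorphism $\varphi_q$ all have to be reconciled so that the product $\odot$ of \cref{prod} comes out with the stated (sign-free) formula. I expect that the $(-1)^{p'q}$ factor in \eqref{eq:def_smile} is precisely what is needed to cancel the sign introduced by commuting the degree-$q$ coefficient part of the first factor past the degree-$p'$ base part of the second, so that the formula for $\odot$ has no sign — but pinning this down carefully is the delicate step. Everything else (additivity of cohomology, degeneration of the Leray sequence for the proper maps $i_W$, reduction to stalks, invoking \cref{lemma_prod}) is routine once that sign is settled.
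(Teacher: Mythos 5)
Your proposal is correct and follows essentially the same route as the paper: identify $E_2^{p,q}$ additively via the sheaf isomorphism of \cref{lemma_Bibby} (using that each $i_W$ is a closed embedding so $\check H^p(T;(i_W)_*\Z_W)\cong H^p(W;\Z)$), then get multiplicativity from \cref{lemma_prod}. The extra steps you spell out --- the cup-product/pushforward compatibility checked on stalks and the reconciliation of the $(-1)^{p'q}$ sign in \eqref{eq:def_smile} with the sign-free formula for $\odot$ --- are exactly what the paper's terse proof leaves implicit, and your expectation about the sign is the intended one.
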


\begin{proof}
The isomorphism $\varphi_q: \epsilon_q \rightarrow R^qj_* \Z_M$ of \cref{lemma_Bibby} induces an isomorphism in cohomology:
\[\tilde{\varphi}_q: \bigoplus_{\rk W =q} H^\bullet(W;\Z) \otimes_\Z H^q(M(\Delta[W]);\Z) \rightarrow E^{\bullet,q}_2(M)\]
Hence we have an isomorphism $\tilde{\varphi}: A^{\bullet, \bullet}(\Delta) \rightarrow E^{\bullet, \bullet}_2(M)$; \cref{lemma_prod} ensures then that $\tilde{\varphi}$ is an isomorphism of algebras.
\end{proof}

The next lemma appeared first in \cite{Callegaro}.
An analogue on the rationals was proven in \cite[Lemma 3.2]{Bibby} in a more general setting using some Hodge theory.

\begin{lemma}[{\cite[Theorem 5.1.3]{Callegaro}}]\label{lemma_SSdegenerate}
The Leray spectral sequence  for the inclusion $M \hookrightarrow T$ degenerates at the second page.
Hence the two algebras $E_2^{\bullet,\bullet}(M)$ and $\gr_{F_\bullet} H^{\bullet}(M;\Z)$ are isomorphic.
\end{lemma}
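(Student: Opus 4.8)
The plan is to bootstrap the integral degeneration from its rational counterpart by a rank count, keeping track of torsion-freeness only where it is needed; alternatively one may simply invoke \cite[Theorem 5.1.3]{Callegaro}, whose proof extracts an explicit integral cochain complex from the toric Salvetti complex, but the argument sketched here only uses the second page identified in \cref{teo_second_page}. Note first that $E_2^{\bullet,\bullet}(M) \cong A^{\bullet,\bullet}(\Delta)$ is a \emph{free} $\Z$-module: in each summand $H^p(W;\Z) \otimes_\Z H^q(M(\Delta[W]);\Z)$ the first factor is an exterior algebra over $\Z$ (as $W$ is a translate of a subtorus) and the second is the Orlik--Solomon algebra, which is $\Z$-free; hence so is every subquotient of $E_2^{\bullet,\bullet}$ sitting inside it, and any differential whose image has rank zero must vanish.

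Next I would compute the total rank of $E_2$. Since $A^{\bullet,\bullet}(\Delta)\otimes_\Z\Q$ is the analogous rational bigraded algebra of \cref{prod}, the number $\sum_{p,q}\rk_\Z E_2^{p,q}(M)$ equals the total $\Q$-dimension of the second page of the rational Leray spectral sequence. That spectral sequence degenerates at $E_2$ by the rational analogue of \cref{lemma_SSdegenerate} (Bibby's weight argument, \cite[Lemma 3.2]{Bibby}, which also recovers Looijenga's Betti numbers \cite{Looijenga}) and converges to $\gr_{F_\bullet} H^\bullet(M;\Q)$, so this total dimension is $\dim_\Q H^\bullet(M;\Q) = \sum_n \rk_\Z H^n(M;\Z)$. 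On the integral side, rank is additive along the exact sequences $0 \to F_{i-1} \to F_i \to F_i/F_{i-1} \to 0$, so the bigraded $\Z$-module $\gr_{F_\bullet} H^\bullet(M;\Z)$, to which the integral Leray spectral sequence converges, also has total rank $\sum_n \rk_\Z H^n(M;\Z)$. Hence $\sum_{p,q}\rk_\Z E_2^{p,q}(M) = \sum_{p,q}\rk_\Z E_\infty^{p,q}(M)$.

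Finally I would run the usual bookkeeping: $\rk_\Z E_{r+1}^{p,q} \le \rk_\Z E_r^{p,q}$ for every $r \ge 2$, and summing over $(p,q)$ the quantity $\rk_\Z E_r^{p,q} - \rk_\Z E_{r+1}^{p,q}$ equals twice the total rank of $d_r$. As the total ranks of $E_2$ and of $E_\infty$ agree, every $d_r$ with $r \ge 2$ has rank zero; and since $E_2$ is torsion-free, inductively each page $E_r$ coincides with $E_2$, so the rank-zero differential $d_r$ maps into a torsion-free module and is therefore identically zero. Thus $E_2^{\bullet,\bullet}(M) = E_\infty^{\bullet,\bullet}(M)$, and because the spectral sequence converges as a bigraded algebra to $\gr_{F_\bullet} H^\bullet(M;\Z)$ (\cref{teo_LeraySS}), this is an isomorphism of bigraded algebras. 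The main obstacle — and the only genuinely external input — is the rational degeneration (equivalently, Looijenga's Betti-number formula): the multiplicative structure by itself does not force the differentials to vanish, since $A^{\bullet,\bullet}(\Delta)$ need not be generated in bidegrees $(1,0)$ and $(0,1)$ (precisely the phenomenon analysed in \cref{sect_gen_deg_one}), so there is no Leibniz-rule shortcut and one must import either the rational computation or Callegaro's integral one.
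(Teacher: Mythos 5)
Your argument is correct, but it is not the route the paper takes: the paper proves nothing here and simply imports the integral degeneration wholesale from \cite[Theorem 5.1.3]{Callegaro}, whose proof goes through the toric Salvetti complex and an explicit integral computation. You instead bootstrap from the rational case: since the second page is identified in \cref{teo_second_page} with $A^{\bullet,\bullet}(\Delta)$, which is $\Z$-free (exterior algebras of the layers tensored with Orlik--Solomon algebras), and since the rational Leray spectral sequence degenerates by \cite[Lemma 3.2]{Bibby} (equivalently, by Looijenga's Betti-number formula \cite{Looijenga}), the total $\Z$-rank of $E_2$ already equals the total rank of $\gr_{F_\bullet}H^\bullet(M;\Z)$; the standard bookkeeping $\sum\rk E_{r+1}=\sum\rk E_r-2\sum\rk d_r$ then forces every $d_r$ to have finite image, and torsion-freeness of $E_2$ (propagated inductively to each $E_r$ once the previous differential vanishes) kills that image, so $E_2=E_\infty$ as bigraded algebras by \cref{teo_LeraySS}. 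Each approach buys something different: the paper's citation of \cite{Callegaro} keeps everything integral and also yields torsion-freeness of $H^\bullet(M;\Z)$ itself, whereas your argument is more self-contained relative to the machinery already set up in \cref{lemma_Bibby} and \cref{teo_second_page}, at the cost of importing the Hodge-theoretic (or Betti-number) input over $\Q$. Your closing remark is also well taken: a Leibniz-rule argument is unavailable since $A^{\bullet,\bullet}(\Delta)$ need not be generated in bidegrees $(1,0)$ and $(0,1)$, so some external computation is genuinely needed. One small point of care: phrase the freeness claim inductively (as you do at the end) rather than as ``every subquotient of $E_2$ is free,'' since quotients of free $\Z$-modules need not be torsion-free in general; the induction $d_r=0\Rightarrow E_{r+1}=E_r$ is what actually preserves freeness page by page.
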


Up to changing the coefficients, the filtration $F_\bullet$ coincides with the one defined in \cite[Remark 4.3]{DeConcini}.
From now on, we denote by $\gr H^\bullet(M;\Z)$ the bi-graded, graded commutative, $\Z$-algebra associated to $ H^\bullet(M;\Z)$ with respect to the filtration $\{F^\bullet_n\}_{n \in \Z}$.

As a consequence of the previous statements we obtain the following result.
\begin{teo}\label{main_on_cohomology}
The Leray spectral sequence gives the following isomorphism of bi-graded $\Z$-algebras:
\[f: A^{\bullet,\bullet}(\Delta) \longrightarrow \gr H^\bullet(M(\Delta);\Z)\]
\end{teo}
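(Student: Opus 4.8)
The plan is to obtain $f$ as the composition of two isomorphisms already proved in this section. By \Cref{teo_second_page} there is an isomorphism of bi-graded $\Z$-algebras $\tilde\varphi\colon A^{\bullet,\bullet}(\Delta)\xrightarrow{\sim} E_2^{\bullet,\bullet}(M)$, induced by the sheaf isomorphism $\varphi$ of \Cref{lemma_Bibby} together with the compatibility with products established in \Cref{lemma_prod}. On the other hand, \Cref{lemma_SSdegenerate} asserts that the Leray spectral sequence degenerates at its second page, and hence that $E_2^{\bullet,\bullet}(M)$ and $\gr_{F_\bullet} H^\bullet(M;\Z)$ are isomorphic as algebras. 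Composing, I would set $f$ to be the degeneration isomorphism precomposed with $\tilde\varphi$.

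To justify the second isomorphism in a self-contained way I would argue as follows. By \Cref{teo_LeraySS} the Leray spectral sequence of $j\colon M\hookrightarrow T$ is a spectral sequence of $\Z$-algebras converging, as a bi-graded algebra, to $\gr_{F_\bullet} H^\bullet(M;\Z)$, and the filtration on the abutment is precisely $F_\bullet$ (this is exactly the identification, carried out in the proof of \Cref{teo_LeraySS}, of the Leray spectral sequence with the horizontal spectral sequence of the relevant double complex). Degeneration at $E_2$ gives $E_2^{p,q}(M)=E_\infty^{p,q}(M)$ for all $p,q$; since the filtration is $F_\bullet$, the term $E_\infty^{p,q}(M)$ is the $(p,q)$-homogeneous component of $\gr_{F_\bullet} H^\bullet(M;\Z)$, and because the convergence is as a bi-graded algebra the multiplication on $E_\infty^{\bullet,\bullet}(M)$ coincides with that on $\gr_{F_\bullet} H^\bullet(M;\Z)$. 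This produces the required algebra isomorphism $E_2^{\bullet,\bullet}(M)\xrightarrow{\sim}\gr H^\bullet(M(\Delta);\Z)$, and $f$ is then a composite of bijective bi-graded algebra maps, hence itself such.

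I do not expect a serious obstacle here: the geometric and homological work — the Brieskorn-type computation of the stalks in \Cref{lemma_Bibby}, the product compatibility in \Cref{lemma_prod}, and the collapse of the spectral sequence in \Cref{lemma_SSdegenerate} — has already been done. The one point that deserves genuine attention, rather than being purely formal, is the bookkeeping of the multiplicative structure: one must use that \Cref{teo_LeraySS} provides a spectral sequence of algebras whose convergence is as algebras (so that $E_2=E_\infty$ transports the ring structure and not merely the additive structure), and one should check that the sign twist $(-1)^{p'q}$ inserted in the definition \eqref{eq:def_smile} of $\smile$ is exactly the convention appearing in the Leray product of \cite[IV, section 6.8]{Bredon}, so that $\tilde\varphi$ is multiplicative on the nose. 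Once these conventions are matched, the theorem is a formal consequence of \Cref{teo_second_page}, \Cref{lemma_SSdegenerate} and \Cref{teo_LeraySS}.
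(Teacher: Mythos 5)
Your proposal is correct and is essentially the paper's own proof: the paper defines $f$ as the composition of the isomorphism $A^{\bullet,\bullet}(\Delta)\simeq E_2^{\bullet,\bullet}(M)$ from \Cref{teo_second_page} with the degeneration isomorphism of \Cref{lemma_SSdegenerate}. Your additional remarks on the multiplicative convergence and the sign convention in \eqref{eq:def_smile} are consistent with how \Cref{teo_LeraySS} is set up and do not change the argument.
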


\begin{proof}
The result follows since the map $f$ is the composition the isomorphism given in \cref{teo_second_page} between $A^{\bullet,\bullet}(\Delta)$ and $E^{\bullet,\bullet}_2(M)$ and the isomorphism of \cref{lemma_SSdegenerate}.
\end{proof}

\section{Uniqueness of representability}
\label{sect_On uniqueness of representability}
In this section we deal with representable arithmetic matroids and with the poset of intersections of a toric arrangement.
The aim is to show, under certain suitable hypotheses, that the representation of an arithmetic matroid is unique up to change of basis and sign reverse.
An analogue result is proven for posets, namely: if a poset is the poset of intersections of a centred toric arrangement then the associated toric arrangement is unique (up to torus automorphism).
The proofs of these assertions follow the ideas in \cite{Lenz} with the only difference that the entries of the matrices are rational numbers, instead of integers.

A \textit{matroid} is a finite \textit{ground set} $E$ together a \textit{rank function} $\rk \colon 2^E \to \N$, that satisfies some properties listed, for example, in \cite[Section 1.1.3]{OxleyBook}.
An \textit{arithmetic matroid} is a matroid together with a \textit{multiplicity function} $m \colon 2^E \to \N$ that satisfies five properties listed in \cite[Section 2.3]{DADDERIO} and in \cite[Section 2]{BM14}.

\begin{de}
An arithmetic matroid $(E,\rk, m)$ is said to be \textit{surjective} if $m(E)=1$ and \textit{torsion-free} if $m(\emptyset)=1$.
\end{de}

A list of elements $(v_e)_{e \in E}$ in a finitely generated abelian group $G$ define an arithmetic matroid with rank function $\rk(S)= \operatorname{rank} G/\langle v_s \rangle_{s \in S}$ and multiplicity function $m(S)$ equals to the cartinality of the torsion subgroup of $G/\langle v_s \rangle_{s \in S}$.

An arithmetic matroid is \textit{representable} if it came from a list of elements in such a way (see the exact definition in \cite{DADDERIO,BM14}).

\begin{de}
A toric arrangement $\Delta$ is said to be \textit{surjective} if the characters $\chi \in \Delta$ generate the lattice $\Lambda$. 
\end{de}

\begin{remark}
The property of toric arrangements to be surjective depends only on the poset of layers.
\end{remark}

Consider a vector space $V$, $\B$ one of its bases and $\{v_i\}_{i=1,\dots, n}$ a list of vectors: we denote by $M_{\B} (\{v_i\}_{i=1,\dots, n})$ the associated matrix of coordinates which is of size $\dim V \times n$.

Let $(E,\rk, m)$ be a representable, torsion-free, arithmetic matroid; choose a basis $\B \subseteq E$ and an essential representation given by the vectors $\{v_e\}_{e\in E}$ in $\Z^r$, where $r=\rk(E)$.
A basis of the vector space $\Q^r \supset \Z^r$ is given by $\{v_b\}_{b\in \B}$, hence we can write the vectors $\{v_e\}_{e \in E}$ as a matrix whose columns are the coordinates of each $v_e$ with respect to the basis $\{v_b\}_{b\in \B}$:
\[M_{\B} (\{v_e\}_{e\in E})= \left( \begin{array}{c|c}
\id_r & A
\end{array} \right) \in M(r,n;\Q)\]
where we have fixed an isomorphism between $E$ and $[n]$ that sends $\B\subset E$ into $[r]\subset [n]$.

Analogously, we can order the characters $\chi \in \Delta$ such that the first $r$ of them are a basis of the vector space $\Lambda_\Q \defeq \Lambda \otimes_\Z \Q$.
Set $E=[n]$ and $\B=[r]$ and write all characters in coordinates to obtain a matrix $\left( \begin{smallmatrix}
\id_r & | & A'
\end{smallmatrix} \right) \in M(r,n;\Q)$.
 
In the following discussion, $A\in M(r,n-r;\Q)$ denotes the matrix associated to representable arithmetic matroids or the one associated to toric arrangements.
Let $C\in M(r,n-r;\Z)$ be the matrix with $c_{i,j}=0$ if the corresponding element $a_{i,j}$ of $A$ is zero and $c_{i,j}=1$ otherwise.
The matrix $C$ is the incidence matrix of a bipartite graph $\G$
with $(r,n-r)$ vertices.
We denote the vertices by $R_1, \cdots, R_r$ and $C_1, \cdots, C_{n-r}$; two vertices $R_i$ and $C_j$ are connected with an edge $\alpha_{i,j}$ if and only if $c_{i,j}=1$.
Now choose a maximal forest $\A$ in $\G$, \ie a maximal subgraph without cycles.

\begin{lemma}\label{uniq_rat_matr}
Let $(E,\rk,m)$ be a representable, torsion-free, arithmetic matroid and consider a basis $\B$, a bijection $E\simeq [n]$, a maximal forest $\A$ in $\G$ and an essential representation $\{v_e\} \subset \Z^r$, as above.
Then:
\begin{enumerate}
\item Replacing some $v_e$ with $-v_e$ we obtain another representation of the arithmetic matroid which is given by a matrix $A'$ such that its element $a'_{i,j}$ is positive whenever $\alpha_{i,j} \in \A$.
\item If all entries $a_{i,j}$ of $A$ such that $\alpha_{i,j} \in \A$ are positive, the matrix $A$ is uniquely determined by the arithmetic matroid.
\end{enumerate}
\end{lemma}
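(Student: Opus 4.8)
The plan is to reduce everything to the matrix $(\,\id_r\mid A\,)$ and to split the information carried by the arithmetic matroid into an ``absolute value'' part (recorded by $m$) and a ``sign'' part (recorded by $\G$ together with $\A$). First I would note how the allowed operations act: replacing some $v_e$ by $-v_e$ changes neither $\rk$ nor $m$, since $\langle\pm v_s\rangle_{s\in S}=\langle v_s\rangle_{s\in S}$, so it produces another representation; and, rewriting $(\,\id_r\mid A\,)$ in the correspondingly modified basis, one sees that it negates the $i$-th row of $A$ if $e$ is the $i$-th element of $\B$, and the $j$-th column if $e=r+j\notin\B$. Hence the sign reversals act on the signs of the entries by the ``switchings'' of $\G$ (choose a set of vertices and flip every incident edge). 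As $\A$ is a forest, its edge-signing carries no cycle and is therefore switchable to the all-positive one; performing the corresponding sign reversals yields the representation required in~(1).

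For (2), assume the $\A$-entries of $A$ are positive and let $(\,\id_r\mid A'\,)$ be the normalised matrix of any representation of the same arithmetic matroid, with the same bijection $E\simeq[n]$, the same $\B$, and positive $\A$-entries — which, by~(1), we may always arrange; I will show $A'=A$. The first step is to recover the $|a_{i,j}|$: for any $S\subseteq E$ with $|S|=r$ one has $\rk(S)=r$ iff the maximal minor of $(\,\id_r\mid A\,)$ on the columns indexed by $S$ is nonzero, and in that case $m(S)=m(\B)\cdot|\mathrm{minor}|$ (likewise for $A'$, using the same $m$). Taking $S=(\B\setminus\{i\})\cup\{r+j\}$ this gives $|a_{i,j}|=m(S)/m(\B)=|a'_{i,j}|$ when $a_{i,j}\neq0$, while $\rk(S)<r$ detects $a_{i,j}=0$. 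Thus $A$ and $A'$ have the same support — so the same $\G$ and the same $\A$ — and $a'_{i,j}=\epsilon_{i,j}a_{i,j}$ for a $\pm1$ signing $\epsilon$ of the edges of $\G$; it remains to show $\epsilon\equiv1$.

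The heart of the proof is the analysis of $\epsilon$ along a \emph{chordless} cycle $\gamma$ of $\G$, say of length $2\ell$, with $R$-vertices $R_{i_1},\dots,R_{i_\ell}$ and $C$-vertices $C_{j_1},\dots,C_{j_\ell}$ met cyclically as $R_{i_1},C_{j_1},R_{i_2},\dots,R_{i_\ell},C_{j_\ell}$. In the $\ell\times\ell$ submatrix $B=(a_{i_a,j_b})_{a,b}$ the only nonzero entries are the $2\ell$ coming from the edges of $\gamma$, namely $a_{i_a,j_a}$ and $a_{i_{a+1},j_a}$ (indices mod $\ell$), because any further nonzero entry in those rows and columns would be a chord of $\gamma$; consequently only the identity permutation and the $\ell$-cycle $a\mapsto a-1$ survive in the expansion of $\det B$, so
\[\det B=P_1+(-1)^{\ell-1}P_2,\qquad P_1=\prod_a a_{i_a,j_a},\quad P_2=\prod_a a_{i_a,j_{a-1}},\]
a difference of two nonzero monomials in the edge-entries of $\gamma$. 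Since $A$ and $A'$ represent the same arithmetic matroid, $|\det B|=|\det B'|$ (both are zero, or both equal $m(\,\cdot\,)/m(\B)$ for the set $(\B\setminus\{i_1,\dots,i_\ell\})\cup\{r+j_1,\dots,r+j_\ell\}$); writing $\det B'=S_1P_1+(-1)^{\ell-1}S_2P_2$ with $S_1=\prod_a\epsilon_{i_a,j_a}$ and $S_2=\prod_a\epsilon_{i_a,j_{a-1}}$, the equality together with $P_1P_2\neq0$ forces $S_1=S_2$, i.e.\ $\prod_{e\in\gamma}\epsilon_e=1$. As chordless cycles span the cycle space of $\G$ over $\Z/2$, this holds for every cycle, so $\epsilon_{i,j}=\delta_i\eta_j$ for suitable vertex signs $\delta,\eta$; the positivity of the $\A$-entries (for $A$ and $A'$) then forces $\delta_i\eta_j=1$ along every $\A$-edge, and since $\A$ is a spanning forest of $\G$, $\delta$ and $\eta$ are constant on each connected component of $\G$. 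Hence $\epsilon\equiv1$ and $A'=A$, proving~(2). I expect the main obstacle to be precisely the determinant computation for chordless cycles — recognising that chordlessness is exactly what leaves only two surviving permutations, so that $\det B$ is a difference of two nonzero terms and $|\det B|$ pins down the cyclic product of signs; the two auxiliary facts used (chordless cycles span the cycle space, and a $\pm1$-valued edge-signing with trivial cycle products is a coboundary) are classical.
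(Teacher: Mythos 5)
Your proof is correct, and it is worth noting how it relates to the paper's: the paper keeps only the common skeleton — the action of $(\Z/2\Z)^r\times(\Z/2\Z)^{n-r}$ by sign reversals of rows and columns, plus the fact that the absolute values of the entries and of all square minors of $A$ are determined by $m$ (\cref{lenz_lemma7} and \cref{lemma_abs_det}, which is exactly your identity $m(S)=m(\B)\,\lvert\det\rvert$ for $|S|=r$) — and then outsources both the switching step (1) and the sign-rigidity step (2) to Lemmas 6 and 9 of \cite{Lenz}, observing that those proofs never use integrality. You instead make the sign-rigidity self-contained: the chordless-cycle computation, where chordlessness leaves only the two permutations $\mathrm{id}$ and the $\ell$-cycle so that $\det B=P_1+(-1)^{\ell-1}P_2$ with $P_1P_2\neq 0$, correctly forces the product of the sign discrepancies $\epsilon$ around every chordless cycle (hence, since induced cycles span the cycle space over $\Z/2$, around every cycle) to be $+1$, and positivity on the maximal forest then kills $\epsilon$; note that at this last stage you could skip the coboundary decomposition $\epsilon_{i,j}=\delta_i\eta_j$ and conclude directly from the fundamental cycle of each non-forest edge, all of whose other edges lie in $\A$. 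So your argument buys a proof that is independent of \cite{Lenz} (at the price of re-proving, in the $1\times1$ and cycle-minor cases, what the paper has already recorded as \cref{lemma_abs_det}), while the paper's version is essentially a two-line reduction to the literature.
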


\begin{lemma}\label{uniq_rat_poset}
Let $\Delta$ be an essential toric arrangement and consider $r$ characters that form a basis of $\Lambda_\Q$, an order of $\Delta$ 
and a maximal forest $\A$ in $\G$, as above.
Then:
\begin{enumerate}
\item Replacing some $\chi_i$ with $-\chi_i$ we obtain another toric arrangement $\Delta'$ such that $M(\Delta)=M(\Delta')$ and $\S(\Delta)=\S(\Delta')$, which associated matrix $A'$ has  element $a'_{i,j}$ positive whenever $\alpha_{i,j} \in \A$.
\item If all entries $a_{i,j}$ of $A$ such that $\alpha_{i,j} \in \A$ are positive, then the matrix $A$ is uniquely determined by the poset $\S$.
\end{enumerate}
\end{lemma}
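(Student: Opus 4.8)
The plan is to follow the proof of \cref{uniq_rat_matr} almost verbatim, the only new ingredient being that the data which there was supplied by the rank and multiplicity functions of the arithmetic matroid must now be recovered from the poset $\S$.

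For (1), the key identity is $\mathcal{V}_T(1-a\chi)=\mathcal{V}_T(1-a^{-1}\chi^{-1})$: replacing simultaneously $\chi_i$ by $-\chi_i$ and $a_i$ by $a_i^{-1}$ leaves every hypertorus of $\Delta$ unchanged, so the resulting arrangement $\Delta'$ satisfies $M(\Delta')=M(\Delta)$ and $\S(\Delta')=\S(\Delta)$. On coordinates, negating $\chi_k$ changes the sign of the $k$-th row of $A$ when $k\le r$ and of the $(k-r)$-th column when $k>r$ (the $\id_r$ block being unaffected once one re-expresses the vectors in the new basis), so $a_{i,j}$ gets multiplied by $\epsilon_i\epsilon_{r+j}$, where $\epsilon_k\in\{\pm1\}$ equals $-1$ exactly on the negated characters. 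As $\A$ is a forest we may prescribe the $\epsilon_k$ on each of its connected components — fix the sign of one vertex and propagate along the edges — so that $\epsilon_i\epsilon_{r+j}=\sgn(a_{i,j})$ for every $\alpha_{i,j}\in\A$; the absence of cycles is precisely what makes this solvable. This is the argument of \cref{uniq_rat_matr}(1).

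For (2), I would verify that $\S$, together with the fixed order of $\Delta$, the basis $\B=[r]$ and the forest $\A$, determines every quantity used in the proof of \cref{uniq_rat_matr}(2). The relevant observation is that, for $S\subseteq\Delta$ with $|S|=r$, the columns of $(\id_r\mid A)$ indexed by $S$ are linearly independent if and only if $\bigcap_{e\in S}\mathcal{V}_T(1-a_e\chi_e)$ is a nonempty finite set (nonemptiness, when the columns are independent, follows since the restriction map $T\to\Hom(\langle\chi_e\rangle_{e\in S},\C^*)$ is then surjective); in that case the intersection consists of $|\Lambda/\langle\chi_e\rangle_{e\in S}|=m(S)$ points, the corresponding maximal minor of $(\id_r\mid A)$ has absolute value $m(S)/m([r])$, and $m(S)$ is read off $\S$ as the number of codimension-$r$ layers $W$ with $W\ge H$ in $\S$ for every $H\in S$. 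Specializing to $S=[r]$ and to $S=([r]\setminus\{i\})\cup\{r+j\}$, one learns from $\S$ whether $a_{i,j}=0$ and, if not, the value $|a_{i,j}|$; more generally every maximal minor of $(\id_r\mid A)$ is determined. With all these magnitudes known and the entries indexed by $\A$ normalized to be positive, the cycle argument in the proof of \cref{uniq_rat_matr}(2) then fixes the remaining signs, hence $A$.

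The step I expect to be the main obstacle is the bookkeeping in (2): reading off the abstract poset $\S$, enriched with the given order of $\Delta$, which rank-one layers are the hypertori and how a subset of $\Delta$ corresponds to a family of rank-one layers. For a primitive arrangement $\S_1$ is exactly the set of hypertori and the matching is immediate, so I would reduce to that case; for a general essential arrangement one must instead keep track of this correspondence explicitly. (Note also that the arithmetic matroid of a toric arrangement is automatically torsion-free, since $\Lambda$ is free, so that \cref{uniq_rat_matr} does apply.)
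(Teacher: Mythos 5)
Your argument is correct and is essentially the paper's: part (1) is the row/column sign-flip plus maximal-forest argument (the paper cites \cite[Lemma 6]{Lenz} for exactly this), and part (2) reduces to the matroid-style uniqueness by showing that the poset determines $|a_{i,j}|$ and the absolute values of all minors of $\left(\begin{smallmatrix}\id_r & | & A\end{smallmatrix}\right)$ via the count of points in intersections — which is precisely the content of \cref{lemma_abs_det} and \cref{lenz_lemma7} — before invoking the sign-determination argument of \cref{uniq_rat_matr}(2) (\cite[Lemma 9]{Lenz}). No further comment needed.
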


\begin{de}
A toric arrangement $\Delta$ is in \textit{normal form} with respect to a basis $\B\subset E$ and to a maximal forest $\A$ if all entries $a_{i,j}$ of the matrix $A$ such that $\alpha_{i,j} \in \A$ are positive. 
\end{de}

Let $\Delta=\{(a_e,\chi_e)\}_{e \in E}$ be a toric arrangement: define a function $m$ from the subsets of $E$ to the natural numbers as follows:
\[ m(I)= \# \left\{\textnormal{connected components of } \bigcap_{i \in I} H_i \right\} \quad \forall I \subseteq E \]
This function is determined by the poset $\S$, in fact it coincides with the number of elements $W$ in $\S_{\rk I}$ such that $W> H_i$ for all $i \in I$.

\begin{example}\label{ex:running}
Let $M=(E=\{1,2,3\},\rk)$ be the matroid of three distinct lines in the real plane.
The function $m$ defined by:
\begin{align*}
& m(\emptyset)=1\\
& m(e)=1  \quad \textnormal{for } e=1,2,3 \\
& m(1,2)= 10 \\
& m(1,3)= 15 \\
& m(2,3)= 25 \\
& m(1,2,3)=5
\end{align*}
makes the matroid $M$ an arithmetic matroid $([3], \rk, m)$.
This arithmetic matroid is representable, indeed a possible representation is given by the matrix $\left( \begin{smallmatrix}
-2 & -32 & -43 \\
1 & 21 & 29
\end{smallmatrix} \right)$.

Choose a basis of the matroid, \eg $\B =\{1,2\} $ consider the matrix $A\in M(2,1; \Q)$ representing the coordinates in the basis $\B$ of the third element of $E$. 
The absolute value of the entries of $A$ is easy to determine:
\begin{align*}
|a_{1,1}| &= \frac{m(2,3)}{m(1,2)} = \frac{5}{2} \\
|a_{2,1}| &= \frac{m(1,3)}{m(1,2)} = \frac{3}{2}
\end{align*}
The associated matrix $C$ 
is then $\left( \begin{smallmatrix}
1 \\ 1
\end{smallmatrix} \right)$, and the associated bipartite graph is:

\begin{center}
\begin{tikzpicture}[shorten >=1pt, auto, ultra thick]
    \tikzstyle{node_style} = [font=\sffamily\Large\bfseries]
    \tikzstyle{edge_style} = [draw=black, line width=1]
    \node[node_style] (v1) at (-1,1) {$r_1$};
    \node[node_style] (v2) at (1,1) {$r_2$};
    \node[node_style] (v3) at (0,-0.5) {$c_1$};
    \draw[edge_style]  (v1) edge (v3);
    \draw[edge_style]  (v2) edge (v3);   
\end{tikzpicture}
\end{center}
This graph has a unique maximal tree, that we call $\A$, hence the normal form of the matrix $A$ (in normal form) is $\left( \begin{smallmatrix}
\frac{5}{2} \\ \frac{3}{2}
\end{smallmatrix} \right)$.
A representation of the arithmetic matroid in normal form is given by $\left( \begin{smallmatrix}
2 & -32 & -43 \\
-1 & 21 & 29
\end{smallmatrix} \right)$
which is obtained from the one we had before by changing the sign of the first column.
\end{example}

\begin{lemma} \label{lemma_abs_det}
The absolute values of the determinants of square submatrices of $A$ are uniquely determined by the underlying arithmetic matroid, if $A$ is defined by an arithmetic matroid, or by the poset of layers, if $A$ is defined by a toric arrangement.
\end{lemma}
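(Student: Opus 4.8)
The plan is to show that every $k\times k$ minor of $A$ is, up to sign, a quotient $m(S)/m(\B)$ of two values of the multiplicity function, where $S$ and $\B$ are suitable rank-$r$ subsets of $E$. Since $m$ is part of the data of the arithmetic matroid and, by the remark preceding this lemma (and the discussion of the function $m$ in \cref{lemma_abs_det}'s setup), is also determined by the poset $\S$ in the toric case, this proves the statement in both settings.

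First I would reduce an arbitrary minor of $A$ to a maximal minor of the full matrix $B\defeq(\id_r\mid A)\in M(r,n;\Q)$. Given subsets $I\subseteq[r]$ and $J\subseteq[n-r]$ with $|I|=|J|=k$, write $A_{I,J}$ for the corresponding $k\times k$ submatrix of $A$ and consider the $r$-element column set $S=([r]\setminus I)\cup\{r+j\mid j\in J\}$ of $B$. Expanding the determinant of the $r\times r$ submatrix $B_S$ along its $r-k$ columns coming from the identity block gives $\det B_S=\pm\det A_{I,J}$, so it is enough to control $|\det B_S|$ for every rank-$r$ column set $S$ of $B$.

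Next I would identify these maximal minors with values of $m$. Fix a $\Z$-basis of the ambient lattice ($\Z^r$ in the matroid case, $\Lambda$ in the toric case) and let $N\in M(r,n;\Z)$ be the matrix whose columns are the coordinates of the $v_e$ (resp.\ the $\chi_e$) in that basis. Since $\{v_b\}_{b\in\B}$ is a $\Q$-basis, the block $N_\B$ is invertible over $\Q$ and $B=N_\B^{-1}N$, so $\det B_S=\det N_S/\det N_\B$ for every $r$-subset $S$. If $S$ indexes a basis of the underlying matroid, then $\langle v_s\rangle_{s\in S}$ has finite index in the ambient lattice and, by definition of the multiplicity function, $m(S)$ equals the order of that (finite) quotient, i.e.\ $m(S)=|\det N_S|$; in the toric case $m(S)$ is the number of connected components of $\bigcap_{s\in S}H_s$, which is non-empty and is a coset of the finite group $\Hom\!\big(\Lambda/\langle\chi_s\rangle_{s\in S},\,\C^*\big)$ (finite because the $\chi_s$ are $\Q$-independent), so $m(S)=[\Lambda:\langle\chi_s\rangle_{s\in S}]=|\det N_S|$ again. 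If $S$ is not a basis, then $\det N_S=0$ and hence $\det B_S=0$, a vanishing forced by the matroid rank function alone. In every case $|\det B_S|$ equals $0$ or $m(S)/m(\B)$, which is a datum of the arithmetic matroid and, since $m$ is determined by $\S$, of the poset of layers; together with the first step this gives $|\det A_{I,J}|\in\{0,\,m(S)/m(\B)\}$.

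The only point that is not pure bookkeeping is the identity $m(S)=|\det N_S|$ for basis sets $S$: in the toric case it requires knowing that $\bigcap_{s\in S}H_s$ is non-empty (surjectivity of the dual of the injection $\Z^{S}\hookrightarrow\Lambda$, $e_s\mapsto\chi_s$) and an exact count of its connected components, and in both cases one must carry along the harmless denominator $\det N_\B=\pm m(\B)$ introduced by working in the rational basis $\{v_b\}_{b\in\B}$ rather than an integral one. Everything else is the cofactor expansion of the first step.
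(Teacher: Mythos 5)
Your proposal is correct and follows essentially the same route as the paper: reduce $\det A_{I,J}$ to the maximal minor of $(\id_r\mid A)$ on the columns $\B\setminus I\cup J$, then use the change of basis $B=N_\B^{-1}N$ (the paper's identity $M_{\mathcal{C}}(\{v_b\}_{b\in\B})M_\B(\{v_e\})=M_{\mathcal{C}}(\{v_e\})$) together with $m(S)=|\det N_S|$ for bases $S$ to get $|\det A_{I,J}|=m(\B\setminus I\cup J)/m(\B)$ or $0$. Your explicit verification that $m(S)$ equals the lattice index in the toric case is just a spelled-out version of the fact the paper uses implicitly.
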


\begin{proof}
Let $A_{I,J}$ be the square submatrix of $A$ with $I\subseteq \B$ and $J \subseteq E\setminus \B$.
We claim that:
\[ |\det (A_{I,J})|= \begin{cases}
\frac{m(\B\setminus I \cup J)}{m(\B)} & \textnormal{if } \B\setminus I \cup J \textnormal{ is a basis} \\
0 & \textnormal{otherwise}
\end{cases} \]
where $m$ is the multiplicity function in the case of arithmetic matroids and the function defined above in the case of toric arrangements.

Let $H$ be the square submatrix of $\left( \begin{smallmatrix}
\id_r & | & A
\end{smallmatrix} \right)$ made by the columns indexed by $\B\setminus I \cup J$.
Clearly $|\det (A_{I,J})|= |\det H|$.
If $\B\setminus I \cup J$ is a dependent set, then $\det H=0$, otherwise we compute $|\det H|$ using the formula
\[ M_{\mathcal{C}}(\{v_b\}_{b \in \B}) M_{\B}(\{v_e\}_{e \in \B\setminus I \cup J})=M_{\mathcal{C}}(\{v_e\}_{e \in \B\setminus I \cup J})\]
where $\mathcal{C}$ is the canonical basis of $\Z^r$ and $M_{\mathcal{C}}(\{v_b\}_{b \in \B})$ is the matrix of coordinates of the vectors $\{v_b\}_{b \in \B}$ in the basis $\mathcal{C}$.
Notice that $|\det M_{\mathcal{C}}(\{v_b\}_{b \in B})|= m(B)$ for all basis $B$ of $E$, thus 
\[m(\B)|\det H|= m(\B \setminus I \cup J)\]
Since $m$ depends only on the poset $\S$, we obtain the statement of the lemma.
\end{proof}

\begin{cor}\label{lenz_lemma7}
The rational numbers $|a_{i,j}|$ are uniquely determined by the arithmetic matroid (or respectively by the poset).
\end{cor}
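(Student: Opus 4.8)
The plan is to observe that \cref{lenz_lemma7} is nothing more than the size‑one case of \cref{lemma_abs_det}. For a fixed pair $(i,j)$ with $i\in\B$ and $j\in E\setminus\B$, I would view the single entry $a_{i,j}$ as the $1\times 1$ submatrix $A_{\{i\},\{j\}}$ of $A$, so that $|a_{i,j}|=|\det A_{\{i\},\{j\}}|$. Applying \cref{lemma_abs_det} with $I=\{i\}$ and $J=\{j\}$ then yields at once that $|a_{i,j}|$ is a function of the underlying arithmetic matroid (in the matroid case) or of the poset of layers $\S$ (in the toric case), since the right-hand side of the formula proved there, $m(\B\setminus\{i\}\cup\{j\})/m(\B)$ when $\B\setminus\{i\}\cup\{j\}$ is a basis and $0$ otherwise, depends only on $m$, hence only on the arithmetic matroid or on $\S$.

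Concretely I would also record the resulting closed formula
\[
|a_{i,j}|=\begin{cases}\dfrac{m(\B\setminus\{i\}\cup\{j\})}{m(\B)} & \text{if }\B\setminus\{i\}\cup\{j\}\text{ is a basis,}\\[2mm] 0 & \text{otherwise,}\end{cases}
\]
and check that its two branches are consistent: the submatrix $H$ of $(\,\id_r\mid A\,)$ on the columns indexed by $\B\setminus\{i\}\cup\{j\}$ is the identity with its $i$-th column replaced by the $j$-th column of $A$, and expanding $\det H$ along that column gives $\det H=\pm a_{i,j}$; thus $a_{i,j}=0$ precisely when $\B\setminus\{i\}\cup\{j\}$ fails to be a basis, which is exactly the case in which \cref{lemma_abs_det} returns $0$. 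So no separate treatment of vanishing entries is required, and \cref{ex:running} already exhibits this computation in a concrete instance.

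I do not expect any real obstacle here: the corollary is an immediate consequence of \cref{lemma_abs_det}. The only points needing a line of care are the identification of a $1\times 1$ submatrix with its entry and the small amount of sign bookkeeping that makes the two cases of the displayed formula agree; both are routine.
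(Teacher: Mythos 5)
Your argument is exactly the paper's: the entry $a_{i,j}$ is the $1\times 1$ submatrix $A_{\{i\},\{j\}}$, and \cref{lemma_abs_det} immediately gives that $|a_{i,j}|$ depends only on the arithmetic matroid (resp.\ the poset). The extra check of the vanishing case and the explicit formula are fine but not needed beyond what the lemma already provides.
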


\begin{proof}
The number $|a_{i,j}|$ is the absolute value of the determinant of the submatrix $A_{i,j}$ of size $1 \times 1$.
The result follows from \cref{lemma_abs_det}.
\end{proof}

\begin{proof}[Proof of \Cref{uniq_rat_matr} and \Cref{uniq_rat_poset}]
In both cases we deal with a matrix $A\in \M(r,n-r;\Q)$ and with the action of $(\Z/2\Z)^r \times (\Z/2\Z)^{n-r}$ on $\M(r,n-r;\Q)$:
this group acts with sign reverse of rows and columns.
The first assertion was proven in \cite[Lemma 6]{Lenz} in the case that $A$ has integer coefficients, but the proof never uses the fact that the coefficients are integers and works also in the case of rational coefficients.

The uniqueness was substantially proven in \cite[Lemma 9]{Lenz}, but since we are dealing with rational coefficients, we need to use \cref{lenz_lemma7} and \cref{lemma_abs_det} instead of \cite[Lemma 7]{Lenz} and \cite[Lemma 8]{Lenz}, respectively.
\end{proof}

\begin{teo}\label{teo_uniq_repr}
Let $(E,\rk, m)$ be a representable, surjective, torsion-free arithmetic matroid.
Then there exists an unique essential representation of $(E,\rk, m)$ up to sign reverse of columns and up to automorphism of $\Z^r$.
\end{teo}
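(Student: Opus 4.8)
The plan is to fix a representation, use the preceding lemmas to reduce matters to the normal-form matrix $A$, and then show that the extra data used to pin down the normal form (the choice of basis, the bijection $E\simeq[n]$, the maximal forest $\A$) do not actually matter once we allow automorphisms of $\Z^r$ and sign reversal of columns. First I would take an arbitrary essential representation $\{v_e\}_{e\in E}\subset \Z^r$. Since the matroid is torsion-free, I may choose a basis $\B\subseteq E$; since it is also surjective, $m(E)=1$ and the full set of vectors generates $\Z^r$, so after a change of basis of $\Z^r$ I may assume $\{v_b\}_{b\in\B}$ is the standard basis, i.e.\ the coordinate matrix has the form $\left(\begin{smallmatrix}\id_r & A\end{smallmatrix}\right)$ with $A\in\M(r,n-r;\Q)$. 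Fixing a maximal forest $\A$ in the bipartite graph $\G$ attached to $A$, \cref{uniq_rat_matr}(1) lets me flip signs of columns so that the representation is in normal form, and \cref{uniq_rat_matr}(2) says the resulting $A$ is then uniquely determined by the arithmetic matroid.

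The core of the argument is therefore to compare two representations $\rho_1,\rho_2$ of $(E,\rk,m)$, each put in normal form as above, and to produce an automorphism of $\Z^r$ and column sign changes carrying one to the other. Both give matrices $\left(\begin{smallmatrix}\id_r & A_k\end{smallmatrix}\right)$, but a priori with respect to \emph{different} choices of basis $\B_k$, bijection, and forest $\A_k$. The key step is a \emph{change-of-basis} reduction: if $\B_1\neq\B_2$, I pass between the two coordinate presentations by the invertible rational matrix expressing one basis in terms of the other, and observe that this is exactly the kind of operation handled inside the proof of \cite[Lemma 9]{Lenz} — once the absolute values $|a_{i,j}|$ and, more generally, the $|\det A_{I,J}|$ are matroid invariants (\cref{lemma_abs_det}, \cref{lenz_lemma7}), the normal-form matrix is independent of the auxiliary choices, so $A_1$ and $A_2$ agree after relabelling. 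Having matched the matrices, the only remaining freedom between $\rho_1$ and $\rho_2$ is that the standard basis of $\Z^r$ could have been identified with $\{v_b\}_{b\in\B_k}$ in two different ways: these differ by an element of $\gl_r(\Z)$, which is by definition an automorphism of $\Z^r$. Combining this with the column sign changes already used to reach normal form gives the statement.

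The main obstacle I anticipate is the passage from a \emph{rational} matrix $A$ back to an honest \emph{integral} representation: the lemmas of \cref{sect_On uniqueness of representability} control the rational normal form, but one must check that two integral representations inducing the same rational $A$ (up to the allowed operations) are actually related by an integral, not merely rational, automorphism. Concretely, if $P\in\gl_r(\Q)$ satisfies $P\cdot\left(\begin{smallmatrix}\id_r & A_1\end{smallmatrix}\right)=\left(\begin{smallmatrix}\id_r & A_2\end{smallmatrix}\right)$ then $P=\id_r$, so the issue is only the initial normalization $\{v_b\}\mapsto$ standard basis; here surjectivity ($m(E)=1$) is exactly what guarantees that the lattice spanned by all the $v_e$ is full, forcing the change-of-basis matrix into $\gl_r(\Z)$. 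I would make this point explicit, since it is precisely the place where the surjective hypothesis (beyond torsion-freeness) is used, and it is what upgrades \cite[Lemma 9]{Lenz} from a statement about a fixed coordinate system to the basis-free uniqueness asserted here.
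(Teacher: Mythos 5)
Your proposal is correct and follows essentially the same route as the paper's proof: put both representations into the rational normal form of \cref{uniq_rat_matr} (so that, after sign reversals of columns, they differ by some $G\in\gl(r;\Q)$), and then use surjectivity ($m(E)=1$), i.e.\ that the full lists $\{v_e\}_{e\in E}$ and $\{v'_e\}_{e\in E}$ each span $\Z^r$, so that $G\Z^r=\Z^r$ and hence $G\in\gl(r;\Z)$ --- exactly the argument in the paper. Two small points of wording: the vectors $\{v_b\}_{b\in\B}$ form only a $\Q$-basis (a $\Z$-basis precisely when $m(\B)=1$), so one cannot ``make them the standard basis by a change of basis of $\Z^r$'' --- one simply takes coordinates with respect to this $\Q$-basis, as your rational matrix $A$ already indicates, and the integrality of the comparison matrix comes only from the full lists, as you say at the end; moreover the worry about $\B_1\neq\B_2$ is unnecessary, since the basis $\B\subseteq E$, the bijection $E\simeq[n]$ and the forest $\A$ are purely matroidal data and may be fixed once and used for both representations.
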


\begin{proof}
Let $\{v_e\}_{e\in E}$ in $\Z^r$ be an essential representation of $(E,\rk,m)$, $X=M_{\mathcal{C}}(\{v_e\}_{e\in E})$ the matrix of coordinates with respect to the canonical basis.
The statement is equivalent to showing that for any other representation $X'=M_\C(\{v'_e\}_{e \in E})$ there exist two matrices $G\in \gl(r;\Z)$ and $D\in \gl(n;\Z)$, such that $D$ is diagonal with all entries $d_{i,i}=\pm 1$ and $X'=GXD$.

\Cref{uniq_rat_matr} proves the existence of the matrices $G \in \gl(r;\Q)$ and $D\in \gl(n;\Z)$ with $d_{i,i}=\pm 1$; we want to show that $G$ has integer coefficients.
The surjectivity assumption is equivalent to $\langle v_e \rangle_{e \in E} =\Z^r=\langle v'_e \rangle_{e \in E}$, while matrix $G$ sends the lattice $\langle v_e \rangle_{e \in E} \subset \Q^r$ isomorphically into $\langle v'_e \rangle_{e \in E} \subset \Q^r$.
Since $G\Z^r= \Z^r$, $G$ has integer coefficients.
\end{proof}

\begin{cor} \label{cor_unic_centred_TA}
Let $\Delta$ be a centred, surjective, essential toric arrangement in $T$ with poset of layers $\S$.
For any other essential toric arrangement $\Delta'$ in $T$ with the same poset of layers $\S$, there exists an automorphism $g$ of the torus $T$ that sends the open set $M(\Delta')$ homeomorphically into $M(\Delta)$.
\end{cor}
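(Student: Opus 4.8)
The plan is to deduce the statement from \cref{teo_uniq_repr}, after translating it into the language of arithmetic matroids. We may assume that $\Delta$ and $\Delta'$ are primitive and without repeated hypertori, so that the atoms of $\S$ form a common ground set $E$ for both arrangements. Attach to $\S$ the arithmetic matroid $(E,\rk,m)$ whose rank function is the codimension in $\S$ and whose multiplicity function is the function $m$ introduced before \cref{lemma_abs_det}, which depends only on $\S$. Since $\Delta$ is centred, its list of characters $(\chi_e)_{e\in E}\subset\Lambda\cong\Z^r$ represents $(E,\rk,m)$, and this representation is essential because $\Delta$ is essential. Moreover $m(\emptyset)=1$ since the torus $T$ is connected, so the matroid is torsion-free; and $m(E)=1$ since $\Delta$ is surjective: for an essential centred arrangement $\bigcap_{e\in E}\mathcal{V}_T(1-\chi_e)=\Hom(\Lambda/\Gamma,\C^*)$ has exactly $[\Lambda:\Gamma]$ connected components, so $m(E)=1$ is equivalent to $\Gamma=\Lambda$. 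Hence $(E,\rk,m)$ is representable, surjective and torsion-free.

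Next I would note that $\Delta'$ provides a second essential representation of the \emph{same} arithmetic matroid: being centred with poset of layers $\S$, it is surjective (surjectivity is a poset invariant) and has the same rank and multiplicity functions as $\Delta$, and writing its characters $(\chi'_e)_{e\in E}$ in coordinates, after fixing bases as in the discussion preceding \cref{uniq_rat_matr}, gives a coordinate matrix $X'$. Applying \cref{teo_uniq_repr} to the two essential representations $X$ and $X'$ of $(E,\rk,m)$ yields $G\in\gl(r;\Z)$ and a diagonal matrix $D$ with entries $\pm1$ such that $X'=GXD$; in other words $\chi'_e=G(\chi_e)$ or $\chi'_e=G(\chi_e^{-1})$ for every $e\in E$.

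It then remains to promote the lattice automorphism $G$ to a torus automorphism. Writing $T=\Hom_\Z(\Lambda,\C^*)$, the automorphism $G$ of $\Lambda$ induces the automorphism $g\defeq\Hom(G,\C^*)$ of $T$, and a direct check from the definitions gives $g\bigl(\mathcal{V}_T(1-\chi)\bigr)=\mathcal{V}_T(1-G^{-1}\chi)$ for every $\chi\in\Lambda$. Since $\mathcal{V}_T(1-\chi)=\mathcal{V}_T(1-\chi^{-1})$ as subsets of $T$, for each $e\in E$ we obtain
\[ g\bigl(\mathcal{V}_T(1-\chi'_e)\bigr)=\mathcal{V}_T\bigl(1-G^{-1}\chi'_e\bigr)=\mathcal{V}_T(1-\chi_e). \]
Therefore $g$ carries $\bigcup_{e\in E}\mathcal{V}_T(1-\chi'_e)$ onto $\bigcup_{e\in E}\mathcal{V}_T(1-\chi_e)$, so $g$ restricts to a homeomorphism (indeed an algebraic isomorphism) of $M(\Delta')$ onto $M(\Delta)$, which is the assertion.

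The real content is already in \cref{teo_uniq_repr}; in this deduction the two points requiring care are (i) checking that $\Delta'$, just by sharing its poset with $\Delta$, inherits every hypothesis needed for $X$ and $X'$ to be essential representations of one and the same surjective, torsion-free, representable arithmetic matroid — which uses that both the multiplicity function and surjectivity are invariants of $\S$ — and (ii) the passage from $\gl(\Lambda)$ to $\Aut(T)$, where the sign ambiguity encoded by $D$ is absorbed by the identity $\mathcal{V}_T(1-\chi)=\mathcal{V}_T(1-\chi^{-1})$.
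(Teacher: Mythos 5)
Your route is essentially the one the paper intends: the paper proves this corollary by rerunning the argument of \cref{teo_uniq_repr} with $X=M_{\mathcal{C}}(\{\chi\}_{\chi\in\Delta})$ and with \cref{uniq_rat_poset} in place of \cref{uniq_rat_matr}, whereas you package the poset data into the arithmetic matroid $(E,\rk,m)$ and invoke \cref{teo_uniq_repr} itself. That substitution is harmless in spirit, but it is exactly where an unstated hypothesis slips in: the corollary does \emph{not} assume $\Delta'$ centred. $\Delta'$ is only an essential arrangement $\{(a_e,\chi'_e)\}_{e\in E}$ with the same poset, yet you justify that its characters represent the same multiplicity function by saying it is ``centred'', and in the final computation you write its hypertori as $\mathcal{V}_T(1-\chi'_e)$, with no translation parameters $a_e$. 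For a genuinely non-centred $\Delta'$ both steps fail as written: the number of components of $\bigcap_{i\in I}H'_i$ equals $|\Tor(\Lambda/\langle\chi'_i\rangle_{i\in I})|$ only when that intersection is nonempty, and a group automorphism $g$ of $T$ sends $\mathcal{V}_T(1-a_e\chi'_e)$ to $\mathcal{V}_T(1-a_eG^{-1}\chi'_e)$, a translate of $\mathcal{V}_T(1-\chi_e)$ rather than $\mathcal{V}_T(1-\chi_e)$ itself (already in $\C^*$ no group automorphism moves the removed point $\{a^{-1}\}$, $a\neq 1$, to $\{1\}$).

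The gap is repairable and should be addressed explicitly. Since $\Delta$ is centred, $m(I)\geq 1$ for every $I\subseteq E$, so the shared poset forces every intersection of hypertori of $\Delta'$ to be nonempty; this both identifies the poset multiplicity of $\Delta'$ with the matroid multiplicity of the list $(\chi'_e)_{e\in E}$ (so \cref{teo_uniq_repr} applies) and produces a point $p\in\bigcap_{e\in E}H'_e$, so that translating by $p^{-1}$ replaces $\Delta'$ by a centred arrangement with the same characters and poset. After composing your $g$ with this translation (so the ``automorphism of the torus'' must be understood to allow translations, as the paper implicitly does) the argument closes. One more small caution of the same kind: ``we may assume $\Delta$ primitive'' is not free, since replacing an imprimitive character by its primitive root turns a centred arrangement into a non-centred one and changes the indexing of atoms; either build primitivity into the normal-form setup or treat the components of $\mathcal{V}_T(1-\chi_e)$ separately. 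Apart from these points, the reduction to \cref{teo_uniq_repr} and the passage from $\gl(r;\Z)$ to a torus automorphism, with the sign ambiguity absorbed by $\mathcal{V}_T(1-\chi)=\mathcal{V}_T(1-\chi^{-1})$, are correct and match the paper's intended argument.
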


The proof of \cref{cor_unic_centred_TA} is analogous to that of \cref{teo_uniq_repr} except for the fact that $X=M_\mathcal{C}(\{ \chi\}_{\chi \in \Delta})$ and that we must apply \cref{uniq_rat_poset} instead of \cref{uniq_rat_matr}.

Given a set of vectors $\{v_i\}_{i=1, \dots, n}$ in $\Z^r$, some linear relation among them may hold; a relation can be written in the form
\begin{equation}\label{eq_relazioni}
\sum_{i \in I}c_i m_i v_i =0
\end{equation}
for some $I \subseteq E$, $c_i\in \{\pm 1\}$ and some $m_i$ positive, relatively prime integers (\ie $\gcd_{i\in I} (m_i)=1$).

\begin{lemma}
Let $(E,\rk,m)$ be a representable, torsion-free, arithmetic matroid, consider, as before, a basis $\B$, a bijection $E\simeq [n]$ and a maximal forest $\A$ in $\G$.
Let $\{v_e\}_{e \in E} \subset \Z^r$ be an essential representation such that the element $a_{i,j}$ of $A$ are positive whenever $\alpha_{i,j} \in \A$.
Then the linear relations among these vectors are uniquely determined by the arithmetic matroid. 
\end{lemma}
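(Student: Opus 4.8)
The plan is to reduce the claim to the matrix $A$ and then invoke \Cref{uniq_rat_matr}. The first step is to observe that a linear relation written in the form \eqref{eq_relazioni}, $\sum_{i\in I} c_i m_i v_i = 0$ with $c_i\in\{\pm1\}$ and $\gcd_{i\in I}(m_i)=1$, is the same datum as a primitive vector of the kernel lattice $K\cap\Z^n$ taken up to a global sign, where $K\defeq\ker\bigl(\Q^n\to\Q^r\bigr)$ is the kernel of the linear map $(t_e)_{e\in E}\mapsto\sum_{e\in E}t_e v_e$. Indeed, given such a relation the integer vector $(c_i m_i)_{i\in I}$ (extended by $0$ outside $I$) lies in $K$ and is primitive precisely because $\gcd_{i\in I}(m_i)=1$; conversely every primitive vector of $K\cap\Z^n$ arises in this way, with $I$ its support, $m_i$ the absolute value of its $i$-th entry, and $c_i$ the corresponding sign, and negating all the $c_i$ records the same relation. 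Hence it suffices to show that the subspace $K\subseteq\Q^n$ is determined by the arithmetic matroid.

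Now pass from $X\defeq M_{\mathcal{C}}(\{v_e\}_{e\in E})$ to the normalised matrix $\bigl(\id_r\mid A\bigr)$. Since $\{v_b\}_{b\in\B}$ is a $\Q$-basis of $\Q^r$, the change-of-basis matrix $G\defeq M_{\mathcal{C}}(\{v_b\}_{b\in\B})$ lies in $\gl(r;\Q)$ and $X=G\cdot\bigl(\id_r\mid A\bigr)$; as $G$ is invertible, $K=\ker X=\ker\bigl(\id_r\mid A\bigr)$, a subspace that depends only on $A$ (it is spanned by the columns of $\left(\begin{smallmatrix}-A\\ \id_{n-r}\end{smallmatrix}\right)$). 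By hypothesis the representation $\{v_e\}$ is in normal form with respect to $\B$ and the maximal forest $\A$, so \Cref{uniq_rat_matr}(2) applies and shows that $A$ is uniquely determined by the arithmetic matroid. Combining this with the previous paragraph, $K$, and therefore the set of linear relations among the $v_e$, is determined by $(E,\rk,m)$.

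The point that requires care — and the reason the normal form hypothesis appears in the statement — is the determination of the \emph{signs} $c_i$, not merely the supports $I$ and the magnitudes $m_i$. The latter are already determined by \Cref{lemma_abs_det} and \Cref{lenz_lemma7}, but for an arbitrary essential representation the kernel $K$ is replaced by its image under the element of $(\Z/2\Z)^n$ relating the two representations (this is exactly \Cref{uniq_rat_matr}(1)), which flips some of the $c_i$; it is the normal form condition that kills this ambiguity by pinning $A$ down on the nose rather than only up to sign reversal of rows and columns. Everything else is the elementary linear algebra of the first two paragraphs, so the main (and only) substantial ingredient is the uniqueness statement \Cref{uniq_rat_matr}(2).
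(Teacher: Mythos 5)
Your proof is correct and follows essentially the same route as the paper: both identify a relation $\sum_{i\in I}c_im_iv_i=0$ with a vector in the kernel of $\left(\begin{smallmatrix}\id_r & | & A\end{smallmatrix}\right)$ and then invoke \Cref{uniq_rat_matr}(2) to pin down $A$ from the arithmetic matroid. Your additional details (the change-of-basis argument showing $\ker X=\ker\bigl(\id_r\mid A\bigr)$, the correspondence with primitive kernel vectors, and the remark on why normal form is needed to fix the signs $c_i$) are correct elaborations of the same argument.
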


\begin{proof}
Equation \eqref{eq_relazioni} holds if and only if
\[\left( \begin{array}{c|c}
\id_r & A
\end{array} \right)\left( \begin{array}{c}
c_1 m_1 \\ \vdots \\ c_n m_n
\end{array} \right)=0\]
or, equivalently, the vector $w=\left( \begin{smallmatrix}
c_1 m_1 \\ \vdots \\ c_n m_n
\end{smallmatrix} \right)$ is in the kernel of the linear application $\left( \begin{smallmatrix}
\id_r & | & A
\end{smallmatrix} \right)$.
Since the matrix $A$ is uniquely determined by the arithmetic matroid (\cref{uniq_rat_matr}), this implies that so are all the relations.
\end{proof}

\begin{teo}\label{teo_rel_chars}
Let $\Delta$ be an essential toric arrangement in normal form with respect to a basis $\B$ and to a maximal forest $\A$.
The linear relations $\sum_{i\in I} c_im_i \chi_i=0$ among the characters, $\chi_i \in \Delta$, are uniquely determined by the poset $\S$. 
\end{teo}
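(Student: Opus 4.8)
The plan is to reduce \Cref{teo_rel_chars} to the already-established \Cref{uniq_rat_poset} in exactly the same way the preceding lemma on arithmetic matroids reduced the analogous statement to \Cref{uniq_rat_matr}. First I would fix the basis $\B$ and the maximal forest $\A$ so that $\Delta$ is in normal form, and write the characters in coordinates with respect to the basis $\{\chi_b\}_{b\in\B}$ of $\Lambda_\Q$ as the matrix $\left(\begin{smallmatrix}\id_r & | & A\end{smallmatrix}\right)\in\M(r,n;\Q)$. The key observation is that a relation of the form $\sum_{i\in I}c_i m_i\chi_i=0$ with $c_i\in\{\pm1\}$ and the $m_i$ positive and globally coprime is, after padding with zeros on the indices outside $I$, exactly the statement that the vector $w=(c_1m_1,\dots,c_nm_n)^t$ (with $c_i m_i=0$ for $i\notin I$) lies in the kernel of the linear map given by $\left(\begin{smallmatrix}\id_r & | & A\end{smallmatrix}\right)$. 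Conversely, every element of this kernel, after clearing denominators and dividing by the gcd and recording signs, produces such a relation. So the set of relations is in canonical bijection with a description of $\ker\left(\begin{smallmatrix}\id_r & | & A\end{smallmatrix}\right)$.

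Next I would invoke \Cref{uniq_rat_poset}(2): since $\Delta$ is in normal form with respect to $\B$ and $\A$, the matrix $A$ is uniquely determined by the poset $\S$. Therefore the linear map $\left(\begin{smallmatrix}\id_r & | & A\end{smallmatrix}\right)$, hence its kernel, hence the complete list of relations $\sum_{i\in I}c_i m_i\chi_i=0$, is determined by $\S$ alone. One small point to spell out is that the passage from a kernel vector to a normalized relation (dividing out the content, fixing the $m_i>0$ and the $c_i\in\{\pm1\}$) is itself purely formal and does not depend on any further data of the arrangement, so the bijection between kernel and normalized relations is canonical; this is where I would be slightly careful, because a priori a kernel vector could have a zero entry whose ``sign'' $c_i$ is then irrelevant, but since $m_i$ is required positive this causes no ambiguity once we agree that indices with coefficient $0$ are simply omitted from $I$.

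The main obstacle is essentially bookkeeping rather than conceptual: one must make sure that ``normal form'' pins down $A$ over $\Q$ (not merely up to sign of rows and columns), which is precisely the content of \Cref{uniq_rat_poset}(2), and that the normalization of relations is compatible across the two representations being compared. Since both of these are already handled — the first by the cited lemma, the second by the definition of the normalized relation \eqref{eq_relazioni} — the proof is short. I would therefore present it as a direct transcription of the arithmetic-matroid lemma's proof: state the kernel reformulation of \eqref{eq_relazioni}, note that $A$ is determined by $\S$ via \Cref{uniq_rat_poset}, and conclude that all relations are determined by $\S$.
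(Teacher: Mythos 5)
Your proposal is correct and follows essentially the same route as the paper: rewrite a relation $\sum_{i\in I}c_im_i\chi_i=0$ as the statement that $(c_1m_1,\dots,c_nm_n)^t$ lies in the kernel of $\left(\begin{smallmatrix}\id_r & | & A\end{smallmatrix}\right)$, then apply \Cref{uniq_rat_poset} to conclude that $A$, hence the kernel, hence all relations, are determined by $\S$. Your extra remarks on normalizing kernel vectors are a harmless elaboration of the same argument.
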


\begin{proof}
The relation $\sum_{i\in I} c_im_i \chi_i=0$ holds if and only if the vector $\left( \begin{smallmatrix}
c_1 m_1 \\ \vdots \\ c_n m_n
\end{smallmatrix} \right)$ is in the kernel of  the linear application $\left( \begin{smallmatrix}
\id_r & | & A
\end{smallmatrix} \right)$.
Since the matrix $A$ is uniquely determined by the poset (\cref{uniq_rat_poset}), this implies that so are all the relations.
\end{proof}

\section{Graded toric Orlik-Solomon algebra}
\label{sect_OS_algebra}
In this section we give a purely combinatorial presentation of the bigraded algebra $\gr H^\bullet(M(\Delta);\Q)= A^{\bullet, \bullet}(M(\Delta)) \otimes_\Z \Q$.
We begin by defining the model $GOS^{\bullet,\bullet}(M(\Delta);\Q)$, then we exhibit a basis of this $\Q$-vector space and finally showing an isomorphism between the objects $\gr H^\bullet(M(\Delta);\Q)$ and $GOS^{\bullet,\bullet}(M(\Delta);\Q)$.

Now on we need to fix a total order on $E$.
A subset $S$ of $E$ is \textit{dependent} (respectively \textit{independent}) if the characters $\{\chi_i\}_{i\in S}$ are linearly dependent (respectively linearly independent).
A minimal dependent set $C$ is said a \textit{circuit} and the set of the form $C \setminus \{c\}$, where $c$ is the maximal element of the circuit $C$, is a \textit{broken circuit}.
A \textit{no broken circuit} is a independent set not containing any broken circuit.

We recall the definitions of \cite{DeConcini}.
\begin{de}
Let $E$ be the ordered set of indexes for the elements of $\Delta$ and let $W$ be a layer.
A $k$-tuple $S=(s_1, \cdots, s_k)$ with $s_i \in E$ is \textit{standard} if the sequence is increasing.
The set $S$ is a \textit{circuit associated to} $W$ if it is a minimal dependent set and $W$ is a connected component of $\bigcap_{i \in S} H_i$.
The set $S$ is an \textit{independent set associated to} $W$ (we will simply say that $S$ is \textit{associated to} $W$) if it is independent and $W$ is a connected component of $\bigcap_{i \in S} H_i$.
The set $S$ is a \textit{no broken circuit associated to} $W$ if it is a standard no broken circuit and $W$ is a connected component of $\bigcap_{i \in S} H_i$.
\end{de}

We recall a well-known theorem of Orlik-Solomon (in \cite{Orlik-Solomon}) on hyperplanes arrangements.
Let $\mathcal{H}$ be a central hyperplane arrangement with hyperplanes indexed by the ordered set $E$.
Let $W^\bullet$ be the exterior graded algebra on generators $\{f_i\}_{i \in E}$ over the ring of integers.
The Orlik-Solomon algebra $OS^\bullet(\mathcal{H};\Z)$ is the quotient of $W^\bullet$ by the ideal generated by the equations:
\[ \sum_{i \in S} (-1)^i f_{S\setminus s_i}=0\]
for all circuits $S$ of $E$, where we adopt the usual notation $f_T= f_{t_1}f_{t_2} \dots f_{t_k}$.

\begin{teo}[Orlik-Solomon \cite{Orlik-Solomon}]
For every central hyperplane arrangement $\mathcal{H}$ there is a an isomorphism:
\[OS^\bullet(\mathcal{H};\Z) \xrightarrow{\sim} H^\bullet(M(\mathcal{H});\Z)\]
Moreover a basis of $OS^\bullet(\mathcal{H};\Z)$ as $\Z$-module is given by the elements $f_S$ where $S$ is a no broken circuit.
\end{teo}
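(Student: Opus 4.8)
The argument is the classical one of Orlik and Solomon, combining a combinatorial spanning statement with a topological rank count; I would organise it in three steps.

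\emph{Step 1: construction of the comparison map.} Each defining linear form $\alpha_i$ is nowhere zero on $M(\mathcal{H})$, hence defines a map $\alpha_i\colon M(\mathcal{H})\to\C^*$; let $e_i\in H^1(M(\mathcal{H});\Z)$ be the pullback of the canonical generator of $H^1(\C^*;\Z)$. Since $W^\bullet$ is free graded-commutative on the $f_i$, the assignment $f_i\mapsto e_i$ extends to an algebra homomorphism $\widetilde\psi\colon W^\bullet\to H^\bullet(M(\mathcal{H});\Z)$. To see that $\widetilde\psi$ kills the Orlik--Solomon ideal, fix a circuit $S$ with dependency $\sum_{i\in S}c_i\alpha_i=0$; the required identity $\sum_{i\in S}(-1)^i e_{S\setminus s_i}=0$ in $H^{|S|-1}(M(\mathcal{H});\Z)$ is the image of the Brieskorn relation among the logarithmic forms $\bigwedge_{j\in S\setminus\{s_i\}} d\log\alpha_j$, a classical identity (see \cite[Ch.~3]{orlik-terao}, \cite{Brieskorn}) that one verifies by restricting the forms to a plane in general position. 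Thus $\widetilde\psi$ descends to $\psi\colon OS^\bullet(\mathcal{H};\Z)\to H^\bullet(M(\mathcal{H});\Z)$.

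\emph{Step 2: no broken circuit monomials span $OS^\bullet$ over $\Z$.} Two reductions suffice. If $S$ is dependent, choose a circuit $C\subseteq S$: writing $r_C=\sum_{i\in C}(-1)^i f_{C\setminus s_i}$ for the corresponding relator, one has $f_c\,r_C=\pm f_C$ for any $c\in C$, so $f_C=0$ in $OS^\bullet$ and hence $f_S=\pm f_{S\setminus C}f_C=0$. If $S$ is independent but contains a broken circuit $C\setminus\{\max C\}$, the relation $r_C=0$ expresses $f_S$ as a $\Z$-linear combination of monomials $f_{S'}$ with $|S'|=|S|$, each now containing $\max C$, while $\max C$ exceeds every element of $C\setminus\{\max C\}$; with respect to a suitable well-ordering on monomials this rewriting strictly decreases a complexity measure, so iterating it terminates with no broken circuit monomials. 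Consequently $\operatorname{rk}_\Z OS^k(\mathcal{H};\Z)\le N_k$, where $N_k$ denotes the number of no broken circuit sets of size $k$.

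\emph{Step 3: the rank count and conclusion.} By Brieskorn's results \cite{Brieskorn}, $H^\bullet(M(\mathcal{H});\Z)$ is torsion-free and generated as a ring by $H^1$; the first fact makes it free, the second shows $\psi$ is surjective since the $e_i$ span $H^1$. Moreover the Poincaré polynomial of $M(\mathcal{H})$ is $\sum_{X}\mu(\widehat 0,X)(-t)^{\operatorname{rk}X}$, whose degree-$k$ coefficient equals $N_k$ by Whitney's theorem relating no broken circuit sets to the characteristic polynomial; hence $H^k(M(\mathcal{H});\Z)\cong\Z^{N_k}$. Now in each degree $\psi$ is a surjection onto a free module of rank $N_k$ from a module spanned by the $N_k$ no broken circuit monomials; since a surjection $\Z^{N_k}\twoheadrightarrow\Z^{N_k}$ is an isomorphism, the images of these monomials form a $\Z$-basis of $H^k$. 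Therefore the monomials are $\Z$-linearly independent in $OS^k$, hence a $\Z$-basis of $OS^k$, and $\psi$ carries a basis to a basis, so it is an isomorphism. The substance lies in Step 2 (termination of the straightening procedure) and in the Brieskorn--Whitney input of Step 3 identifying $\operatorname{rk}_\Z H^k$ with $N_k$; the remaining implications are formal.
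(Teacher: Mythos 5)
The paper does not prove this statement at all: it is quoted as a classical theorem of Orlik and Solomon, with \cite{Orlik-Solomon} and \cite{orlik-terao} standing in for the argument. Your sketch is the standard classical proof and is correct in outline: the map $f_i\mapsto e_i$ killing the circuit relations, the straightening of monomials onto no-broken-circuit monomials (your computation $f_c\,r_C=\pm f_C$ and the termination argument are the right ones), and the rank count forcing the surjection $OS^k\to H^k$ between a module with $N_k$ generators and a free module of rank $N_k$ to be an isomorphism. The only caveat worth flagging is in Step 3: the identity $b_k=N_k$ is obtained by quoting Brieskorn's decomposition (freeness and generation in degree one) together with the Poincar\'e polynomial formula in terms of the M\"obius function plus Whitney's no-broken-circuit count; the Poincar\'e polynomial formula is itself part of what Orlik and Solomon establish (by deletion--restriction, or by induction on rank from Brieskorn's lemma and an Euler characteristic computation), so strictly speaking your argument quotes a result usually proved alongside the theorem rather than before it. This does not create circularity --- the Betti number computation can be, and classically is, carried out independently of the algebra presentation --- but a self-contained write-up would either run that induction explicitly or follow \cite{orlik-terao} in proving the algebraic and topological statements simultaneously by deletion--restriction.
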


For every toric arrangement $\Delta$ in normal form, let us define a bigraded $\Q$-vector space $V$ with basis given by the formal symbols  
\[\{e_T y_{W,S} \mid S,T \subseteq E,\, W \in \S,\, S \textnormal{ is associated to }W\}.\]
Define the grade of $e_Ty_{W,S}$ to be $(|T|, \rk W)$, or, equivalently, $(|T|, |S|)$.
We will write $e_T$ instead of $e_T y_{U, \emptyset}$ (where $U$ is the torus, \ie the unique layer of rank zero) and $y_{W,S}$ instead of $e_\emptyset y_{W,S}$.
We endow $V$ with the exterior product defined on the generators by
\[ e_Ty_{W,S} \wedge e_{T'}y_{W',S'}= 0 \]
if the tuple $SS'$ is dependent and by
\[ e_Ty_{W,S} \wedge e_{T'}y_{W',S'} =  (-1)^{|S||T'|}\sum_{W''} e_{TT'}y_{W'',SS'} \]
otherwise, where $W''$ runs over all connected components of $W \cap W'$ and $SS'$ is the concatenation of $S$ and $S'$.
The vector space $V$ endowed with product is a bigraded algebra over $\Q$.

Let $J$ be the homogeneous ideal generated by the following relations:
\begin{enumerate}
\item $e_{\tau T}y_{W,\sigma S}=(-1)^{\sgn \sigma+ \sgn \tau}e_T y_{W,S}$ for all permutations $\sigma \in \mathfrak{S}_{|S|}$ and $\tau \in \mathfrak{S}_{|T|}$. \label{eq_reorder}
\item $e_iy_{W,S}=0$ for all $i\in S$. \label{eq_ovvia}
\item $\sum_{i=0}^{\rk W} (-1)^i y_{W,S\setminus s_i}=0$ for all circuits $S=\{s_0, \cdots, s_{\rk W}\}$ associated to $W$. \label{eq_alla_OS}
\item $\sum_{i\in I} c_i m_i e_i=0$ for all $I$, $c_i$ and $m_i, \, i \in I$ appearing in the relations of \cref{teo_rel_chars}. \label{eq_nel_toro}
\end{enumerate}

\begin{de}\label{def_GOS}
Let $\Delta$ be a toric arrangement in normal form.
The \textit{graded Orlik-Solomon algebra} for toric arrangement $\Delta$ is
\[ GOS^{\bullet,\bullet} (M(\Delta) ;\Q) = \faktor{V}{J}\]
\end{de}

Define $I_W$ as the set of characters in $\Lambda$ that vanish on the translated to the origin of the layer $W\in \S$.
For every layer $W\in \S$, we choose arbitrary $\dim W$ characters in $\Delta$ such that their projection in $\Lambda / I_W \otimes_\Z \Q$ form a basis as $\Q$ vector space.
Let $\mathcal{B}_W \subset E$ be the set of the chosen characters for the layer $W$.

\begin{lemma}\label{lemma_generators}
Let $\Delta$ be an essential toric arrangement.
Then a set of generators of $GOS^{\bullet,\bullet}(M(\Delta);\Q)$ as $\Q$-vector space is given by the elements $e_T y_{W,S}$ with $S$ no broken circuit associated to $W$ and $T$ a subset of $\mathcal{B}_W$.
\end{lemma}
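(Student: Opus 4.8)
The plan is to show that every basis element $e_T y_{W,S}$ of $V$ can be rewritten, modulo the ideal $J$, as a $\Q$-linear combination of the claimed generators. There are two independent reductions to perform: one on the ``$y$-part'' (the symbol $S$) and one on the ``$e$-part'' (the symbol $T$), and I would treat them in turn.

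First I would reduce $S$ to a no broken circuit associated to $W$. Fix $e_T y_{W,S}$ with $S$ associated to $W$. If $S$ is dependent, then $e_T y_{W,S}=0$ in $V$ already (by the product rule, or we simply exclude such symbols since $S$ associated to $W$ is required to be independent in the relevant case — here I use that the generators $y_{W,S}$ with $S$ dependent can be killed via relation \eqref{eq_alla_OS} applied to a circuit contained in $S$). Assuming $S$ independent, if $S$ contains a broken circuit $C\setminus\{c\}$, I apply relation \eqref{eq_alla_OS} to the circuit $C$ to express $y_{W',C\setminus\{c\}}$ — summed over the layers $W'$ below the relevant intersection — in terms of $y_{W',C\setminus\{c'\}}$ for $c'\ne c$, each of which has smaller broken-circuit ``defect''; combined with the product/reordering relations \eqref{eq_reorder} this lets me rewrite $e_T y_{W,S}$ as a combination of $e_T y_{W'',S''}$ with $S''$ strictly earlier in a suitable well-order (\eg lexicographic on the sorted tuple, or by a standard Orlik–Solomon induction on $\sum_{s\in S}s$). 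This is exactly the classical Orlik–Solomon straightening argument, carried out ``layer by layer'': for each fixed layer the relevant quotient of $V$ by relations \eqref{eq_reorder}, \eqref{eq_alla_OS} surjects onto (a graded piece of) the Orlik–Solomon algebra of $\Delta[W]$, whose nbc monomials span by the Orlik–Solomon theorem quoted above. Termination of the induction is the routine point.

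Second I would reduce $T$ to a subset of $\mathcal{B}_W$. Once $S$ is an nbc associated to $W$, consider the $e$-variables. Relation \eqref{eq_ovvia} lets me assume $T\cap S=\emptyset$ (any $e_i$ with $i\in S$ kills the symbol). Now I want to replace an arbitrary index $i\in T$ by indices in $\mathcal{B}_W$. By construction the characters indexed by $\mathcal{B}_W$ project to a $\Q$-basis of $\Lambda/I_W\otimes\Q$, so modulo $I_W$ the character $\chi_i$ is a $\Q$-linear combination $\sum_{b\in\mathcal{B}_W}q_b\chi_b + \sum_{s\in S}(\cdots)\chi_s$ plus something vanishing on (the translate of) $W$. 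I claim relation \eqref{eq_nel_toro} — the relations among characters of Theorem \ref{teo_rel_chars} — together with \eqref{eq_ovvia} forces the corresponding identity $m\,e_i = \sum_{b\in\mathcal{B}_W} (\cdots) e_b$ inside $e_Ty_{W,S}$: indeed, since $S$ is associated to $W$, the intersection $\bigcap_{s\in S}H_s$ has $W$ as a component and the characters $\chi_s$, $s\in S$, together with those of $\mathcal{B}_W$ span $\Lambda_\Q$; so any linear dependence among $\chi_i$ and the $\chi_b$, $\chi_s$ comes (after clearing denominators) from an integral relation of the type in \eqref{eq_nel_toro}, and the $\chi_s$-terms are annihilated by \eqref{eq_ovvia} after multiplying through by $y_{W,S}$. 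Iterating over all $i\in T\setminus\mathcal{B}_W$ and using that $V$ is a $\Q$-algebra (so we may divide by the nonzero integer coefficients $m$), I reduce $T$ into $\mathcal{B}_W$, at which point \eqref{eq_reorder} lets me order the resulting monomials standardly.

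The main obstacle I anticipate is the interaction of the two reductions in the second step: when I rewrite $e_i$ in terms of $e_b$, $b\in\mathcal{B}_W$, I must be sure I am not introducing $e_s$ with $s\in S$ that I cannot then remove, and more delicately that the integral relation I invoke is genuinely one of those from Theorem \ref{teo_rel_chars} — \ie that passing from a $\Q$-linear dependence modulo $I_W$ to a bona fide relation among the $\chi_e$ in $\Lambda$ does not leak in coefficients that \eqref{eq_nel_toro} does not cover. This is handled by noting that $\{\chi_s\}_{s\in S}\cup\{\chi_b\}_{b\in\mathcal{B}_W}$ is a spanning set whose cardinality equals $\rk W + \dim W = r$, hence a $\Q$-basis of $\Lambda_\Q$ after discarding redundancies, so $\chi_i$ has a unique expansion in it, and clearing denominators yields precisely an element of $\ker\bigl(\begin{smallmatrix}\id_r&|&A\end{smallmatrix}\bigr)$, which is what \eqref{eq_nel_toro} encodes. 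Everything else is bookkeeping with the sign conventions of \eqref{eq_reorder}.
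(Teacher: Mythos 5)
Your proposal is correct and follows essentially the same route as the paper's proof: first reduce $S$ to a no broken circuit associated to $W$ via the reordering relation \eqref{eq_reorder} and the Orlik--Solomon-type relations \eqref{eq_alla_OS}, then use the character relations \eqref{eq_nel_toro} to push $T$ into $\mathcal{B}_W$ up to terms meeting $S$, which are killed by \eqref{eq_ovvia}. Your extra care in checking that the $\Q$-linear expansion of $\chi_i$ in $\{\chi_b\}_{b\in\mathcal{B}_W}\cup\{\chi_s\}_{s\in S}$ clears to an integral relation covered by \cref{teo_rel_chars} is a detail the paper leaves implicit, but it is the same argument.
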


\begin{proof}
It is sufficient to show that any generic element $e_Ty_{W,S}$ can be written as linear combination of the ones with $S'$ no broken circuit associated to $W'$ and $T'$ a subset of $\mathcal{B}_{W'}$.
By relation \ref{eq_reorder}, we may assume that $S$ and $T$ are standard lists and by relations \ref{eq_alla_OS} we can write each $e_Ty_{W,S}$ as sum of certain $e_Ty_{W,S'}$ with $S'$ a no broken circuit associated to $W$.
By relations \ref{eq_nel_toro}, we can write each $e_Ty_{W,S'}$ as linear combination of some $e_{T'}y_{W,S'}$ with $T'\subseteq \mathcal{B}_W$ or $T' \cap S' \neq \emptyset$.
In the case $T' \cap S' \neq \emptyset$, the relations of point \ref{eq_ovvia} show that $e_{T'}y_{W,S'}=0$ in $GOS^{\bullet,\bullet}(M(\Delta);\Q)$.
\end{proof}

Let $\omega$ be a generator of $H^1(\C^*;\Z)$.
The map $\chi : T \rightarrow \C^*$ induces a map in cohomology $\chi^*: H^1(\C^*;\Z) \rightarrow H^1(T;\Z)$ and gives  $\chi^*(\omega) \in H^1(T;\Z)$.
Let $\tilde{f}$ be the linear map $\tilde{f}: V \rightarrow A^{\bullet,\bullet} \otimes_\Z \Q$ defined on the generators $e_Ty_{W,S}$ by
\[ \tilde{f}(e_Ty_{W,S})=\chi_{t_1}^*(\omega)\chi_{t_2}^*(\omega)\cdots \chi_{t_p}^*(\omega) \otimes f_S \in H^p(W;\Z)\otimes H^{\rk W}(M(\Delta[W]);\Z) \]
where the canonical projection from $H^p(T;\Z) \rightarrow H^p(W;\Z)$ is understood and the element $f_S$ is a generator of the Orlik-Solomon algebra of the hyperplane arrangements $\Delta[W]$.


\begin{lemma}
The map $\tilde{f}$ is a morphism of algebras and factors through the algebra $GOS^{\bullet,\bullet}(M(\Delta);\Q)$.
\begin{center}
\begin{tikzpicture}[scale=1.6]
\node (x) at (0,1) {$V$};
\node (z) at (2,1) {$A^{\bullet,\bullet} \otimes_\Z \Q$};
\node (y) at (1,0) {$GOS^{\bullet,\bullet}(M(\Delta);\Q)$};
\draw[->] (x) to node[above]{$\tilde{f}$} (z);
\draw[->] (x) to (y);
\draw[->] (y) to (z);
\end{tikzpicture}
\end{center}
\end{lemma}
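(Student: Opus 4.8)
The plan is to check, on the basis of $V$, that $\tilde{f}$ is multiplicative, and then that $\tilde{f}$ annihilates each of the four families of elements generating the ideal $J$; since an algebra morphism killing a generating set of an ideal kills the whole ideal, this yields the factorization $\tilde{f}\colon V\twoheadrightarrow GOS^{\bullet,\bullet}(M(\Delta);\Q)\to A^{\bullet,\bullet}\otimes_\Z\Q$, the triangle commuting by construction. Note first that $\tilde f$ is well defined and bidegree-preserving: it sends the basis element $e_Ty_{W,S}$, of bidegree $(|T|,\rk W)$, to $\chi_T^*\omega\otimes f_S$ (I abbreviate $\chi_T^*\omega\defeq\chi_{t_1}^*(\omega)\cdots\chi_{t_p}^*(\omega)$), which lies in $H^{|T|}(W;\Z)\otimes_\Z H^{\rk W}(M(\Delta[W]);\Z)\subseteq A^{|T|,\rk W}$.

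For multiplicativity, fix generators $e_Ty_{W,S}$ and $e_{T'}y_{W',S'}$. A dimension count in the torus shows that every connected component $L$ of $W\cap W'$ has $\rk L=\rk\langle\chi_i\rangle_{i\in SS'}$, hence $\rk L\le\rk W+\rk W'$ with equality precisely when $SS'$ is independent; in that case each such $L$ is a connected component of $\bigcap_{i\in SS'}H_i$, i.e.\ $SS'$ is associated to $L$. If $SS'$ is dependent, the product in $V$ is $0$; on the other side each $L$ has $\rk L<\rk W+\rk W'$, so $H^{\rk W+\rk W'}(M(\Delta[L]);\Z)=0$ because $\Delta[L]$ is essential of rank $\rk L$, and the formula of \cref{prod} gives $\tilde f(e_Ty_{W,S})\odot\tilde f(e_{T'}y_{W',S'})=0$ as well. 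If $SS'$ is independent, one computes both sides using: the composite $H^\bullet(T)\to H^\bullet(W)\to H^\bullet(L)$ equals the projection $H^\bullet(T)\to H^\bullet(L)$; the cup product of the classes $\chi_i^*\omega$ is concatenation; and the Brieskorn inclusion $b_{W,L}$ realizes $OS^\bullet(\Delta[W])=OS^\bullet(\Delta[L]_W)$ as the subalgebra of $OS^\bullet(\Delta[L])$ on the generators $\{f_i\mid H_i\supseteq W\}$, sending $f_S\mapsto f_S$. With these identifications both $\tilde f(e_Ty_{W,S})\odot\tilde f(e_{T'}y_{W',S'})$ and $\tilde f\bigl(e_Ty_{W,S}\wedge e_{T'}y_{W',S'}\bigr)$ equal $(-1)^{|S||T'|}\sum_L(\chi_{TT'}^*\omega)|_L\otimes f_{SS'}$, the sign $(-1)^{|S||T'|}$ being the Koszul sign $(-1)^{p'q}$ of \eqref{eq:def_smile} transported to $A^{\bullet,\bullet}$ by the isomorphism of \cref{teo_second_page}. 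Hence $\tilde f$ is a morphism of bigraded $\Q$-algebras.

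It remains to see that $\tilde f$ kills the generators of $J$. Relation \ref{eq_reorder}: permuting the factors of $\chi_T^*\omega$ in the exterior algebra $H^\bullet(W)$ yields the sign $(-1)^{\sgn\tau}$, and permuting the factors of $f_S$ in $OS^\bullet(\Delta[W])$ yields $(-1)^{\sgn\sigma}$, so $\tilde f(e_{\tau T}y_{W,\sigma S})=(-1)^{\sgn\sigma+\sgn\tau}\tilde f(e_Ty_{W,S})$. Relation \ref{eq_ovvia}: for $i\in S$ one has $W\subseteq H_i=\mathcal{V}_T(1-a_i\chi_i)$, so $\chi_i$ is constant on $W$, the image of $\chi_i^*\omega$ in $H^1(W;\Z)$ vanishes, and $\tilde f(e_iy_{W,S})=\chi_i^*\omega|_W\otimes f_S=0$. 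Relation \ref{eq_alla_OS}: if $S=\{s_0,\dots,s_{\rk W}\}$ is a circuit associated to $W$, each $S\setminus s_i$ is independent of rank $\rk W$, hence (having the same rank as $S$) also associated to $W$, and $\{H_j^\upharpoonright\}_{j\in S}$ is a circuit of the hyperplane arrangement $\Delta[W]$, so $\sum_i(-1)^if_{S\setminus s_i}=0$ in $OS^\bullet(\Delta[W])\cong H^{\rk W}(M(\Delta[W]);\Z)$ by the Orlik--Solomon relation, whence $\tilde f\bigl(\sum_i(-1)^iy_{W,S\setminus s_i}\bigr)=1\otimes\sum_i(-1)^if_{S\setminus s_i}=0$. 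Relation \ref{eq_nel_toro}: since $\chi\mapsto\chi^*\omega$ is the canonical (additive) isomorphism $\Lambda\xrightarrow{\sim}H^1(T;\Z)$, each relation $\sum_{i\in I}c_im_i\chi_i=0$ furnished by \cref{teo_rel_chars} gives $\sum_{i\in I}c_im_i\chi_i^*\omega=0$, i.e.\ $\tilde f\bigl(\sum_{i\in I}c_im_ie_i\bigr)=0$.

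I expect the main obstacle to be the sign bookkeeping in the multiplicativity step — reconciling the sign $(-1)^{|S||T'|}$ built into the product on $V$ with the conventions of \eqref{eq:def_smile} and of the isomorphism of \cref{teo_second_page} — together with the related point that the product on $A^{\bullet,\bullet}$ genuinely vanishes when $SS'$ is dependent, which rests on the vanishing of $H^\bullet(M(\Delta[L]))$ above the rank of $\Delta[L]$ and hence on the correct identification of $\rk L$. The remaining verifications are routine translations between the combinatorial relations defining $J$ and standard properties of torus cohomology, Orlik--Solomon algebras, and the Brieskorn inclusion.
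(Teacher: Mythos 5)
Your proof is correct and takes essentially the same approach as the paper, whose own argument simply asserts multiplicativity of $\tilde f$ on generators and that the relations (1)--(4) are sent to zero; you carry out exactly these verifications, just in more detail. Your explicit handling of the Koszul sign $(-1)^{|S||T'|}=(-1)^{p'q}$ and of the vanishing of the product when $SS'$ is dependent makes explicit points the paper leaves implicit, but does not change the route.
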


\begin{proof}
The map $\tilde{f}$ is a morphism of algebras, since 
\[ \tilde{f}(e_Ty_{W,S} \cdot e_{T'}y_{W',S'})= \tilde{f}(e_Ty_{W,S}) \odot \tilde{f}(e_{T'}y_{W',S'}), \]
where $\odot$ is the product of $A^{\bullet,\bullet}$ introduced in \Cref{prod}.
Moreover, the map $\tilde{f}$ sends the relations \ref{eq_reorder}-\ref{eq_nel_toro} to zero, hence the ideal $J$ is mapped into zero.
\end{proof}

Call $f: GOS^{\bullet,\bullet}(M(\Delta);\Q) \rightarrow A^{\bullet,\bullet}(M(\Delta)) \otimes_\Z \Q$ the map induced by $\tilde{f}$ on the quotient.

\begin{teo}\label{thm:main_on_rational}
Let $\Delta$ be an essential toric arrangement. 
The map $f$ is an isomorphism of algebras.
\[f: GOS^{\bullet,\bullet}(M(\Delta);\Q) \xrightarrow{\sim} A^{\bullet,\bullet}(M(\Delta)) \otimes_\Z \Q\]
\end{teo}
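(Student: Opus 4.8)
The plan is to prove that $f$ is an isomorphism by combining a dimension count (via the explicit spanning set of \Cref{lemma_generators}) with a check that $f$ is surjective, and then using finite-dimensionality in each bidegree to upgrade surjectivity plus a count to bijectivity. Since everything is graded and each bidegree component is a finite-dimensional $\Q$-vector space, it suffices to work one bidegree $(p,q)$ at a time. By \Cref{lemma_generators} the space $GOS^{p,q}$ is spanned by the elements $e_Ty_{W,S}$ with $\rk W = q$, $S$ a no broken circuit associated to $W$, and $T \subseteq \mathcal{B}_W$ with $|T| = p$; this gives an upper bound on $\dim_\Q GOS^{p,q}$. On the other side, by \Cref{main_on_cohomology} (or directly \Cref{prod}), $A^{p,q}\otimes_\Z\Q = \bigoplus_{\rk W = q} H^p(W;\Q)\otimes_\Q H^q(M(\Delta[W]);\Q)$, and here I would use two standard facts: $W$ is itself a torus, so $H^p(W;\Q) \cong \bigwedge^p(\Lambda/I_W)_\Q$ has dimension $\binom{\dim W}{p}$, with basis indexed by $p$-subsets of $\mathcal{B}_W$; and by the Orlik–Solomon theorem quoted above, $H^q(M(\Delta[W]);\Q)$ has a basis indexed by no broken circuits associated to $W$ of size $q$. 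Hence the spanning set of \Cref{lemma_generators} has exactly $\dim_\Q(A^{p,q}\otimes\Q)$ elements, and it will follow that $\dim_\Q GOS^{p,q} \le \dim_\Q(A^{p,q}\otimes\Q)$.

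Next I would show $f$ is surjective. It is enough to hit, for each layer $W$ with $\rk W = q$, a spanning set of $H^p(W;\Q)\otimes H^q(M(\Delta[W]);\Q)$. The element $f(e_Ty_{W,S}) = \chi_{t_1}^*(\omega)\cdots\chi_{t_p}^*(\omega)\otimes f_S$ lies in this summand; as $S$ ranges over no broken circuits associated to $W$ the classes $f_S$ span $H^q(M(\Delta[W]);\Q)$ (Orlik–Solomon), and as $T$ ranges over $p$-subsets of $\mathcal{B}_W$ the products $\chi_{t_1}^*(\omega)\cdots\chi_{t_p}^*(\omega)$ span $H^p(W;\Q)$ — this is because, by the defining property of $\mathcal{B}_W$, the images of the $\chi_t$ for $t \in \mathcal{B}_W$ form a $\Q$-basis of $(\Lambda/I_W)_\Q \cong H^1(W;\Q)$, and the cohomology ring of the torus $W$ is the exterior algebra on $H^1(W;\Q)$. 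So $f$ is surjective in every bidegree. Combined with the dimension inequality from the previous paragraph, a surjection $GOS^{p,q}\twoheadrightarrow A^{p,q}\otimes\Q$ between finite-dimensional spaces with $\dim GOS^{p,q}\le\dim(A^{p,q}\otimes\Q)$ forces both spaces to have equal dimension and $f$ to be an isomorphism in that bidegree; since $f$ is already known to be an algebra map, it is an isomorphism of bigraded algebras.

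The main obstacle is the dimension inequality — specifically, making rigorous that the generating set of \Cref{lemma_generators} really has cardinality at most $\sum_{\rk W = q}\binom{\dim W}{p}\cdot(\#\text{nbc of size }q\text{ associated to }W)$, i.e. that there is no further collapsing one must account for beyond what \Cref{lemma_generators} already records, and dually that the surjection is onto a space of exactly that dimension. Here the two crucial inputs are: (i) the injectivity half of Orlik–Solomon (the no broken circuits are a \emph{basis}, not merely a spanning set, of each $H^q(M(\Delta[W]);\Q)$), and (ii) that the chosen $\mathcal{B}_W$ indexes an honest basis of $H^\bullet(W;\Q)$ as an exterior algebra. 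Once these are in place the counting matches on the nose and the argument closes; no delicate compatibility with the product $\odot$ is needed beyond the already-established fact that $f$ is an algebra homomorphism. One should also note the essentiality hypothesis is used to guarantee $M(\Delta[W])$ is the complement of an honest (essential enough) hyperplane arrangement so Orlik–Solomon applies verbatim, and that working over $\Q$ is what lets the exterior-algebra description of $H^\bullet(W)$ and the Orlik–Solomon basis interact cleanly without torsion corrections.
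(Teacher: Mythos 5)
Your argument is correct, and its skeleton is the same as the paper's: prove $f$ is surjective and then compare the cardinality of the spanning set from \cref{lemma_generators} with the dimension of the target, so that a surjection between spaces of matching size is forced to be an isomorphism. The genuine difference is in how that dimension is obtained. The paper counts globally: it quotes the Looijenga/De Concini--Procesi formula \eqref{poly_char} for the Poincar\'e polynomial and observes that the number of generators equals $P_\Delta(1)$, i.e.\ the total rank of $A^{\bullet,\bullet}$. You instead compute $\dim_\Q\bigl(A^{p,q}\otimes_\Z\Q\bigr)$ bidegree by bidegree, directly from \cref{prod}: $H^p(W;\Q)$ is the $p$-th exterior power of $H^1(W;\Q)$ with basis indexed by $p$-subsets of $\mathcal{B}_W$, and $H^q(M(\Delta[W]);\Q)$ has the Orlik--Solomon no-broken-circuit basis. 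This is somewhat more self-contained (no external Betti-number formula, only the Orlik--Solomon theorem already quoted) and it yields the isomorphism in each bidegree separately; the small point you should make explicit is that the size-$q$ no broken circuits of the hyperplane arrangement $\Delta[W]$ coincide with the no broken circuits associated to $W$ indexing the generators of \cref{lemma_generators}, which holds because any independent $S$ of rank $q=\rk W$ consisting of hypertori containing $W$ has $W$ as a connected component of $\bigcap_{i\in S}H_i$, and the circuit relations \ref{eq_alla_OS} used in the rewriting are exactly the circuits of $\Delta[W]$. One minor correction: essentiality is not what makes Orlik--Solomon applicable to $\Delta[W]$ (that theorem needs no such hypothesis); it is what guarantees that $\mathcal{B}_W$ exists, i.e.\ that restrictions of the classes $\chi_e^*(\omega)$ generate $H^\bullet(W;\Q)$, which is precisely where your surjectivity step (and the paper's) uses it.
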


\begin{proof}
The map $f$ is surjective because $H^\bullet(W;\Z) \otimes_\Z \Q$ is generated in degree one by the elements $\chi^*(\omega)$ with $\chi \in \Delta$.

The set of generators described in \cref{lemma_generators} is of cardinality equal to the rank of $A^{\bullet,\bullet}$, so the map $f$ must be an isomorphism.
Consequently, the elements $e_Sy_{W,T}$ with $T$ no broken circuit associated to $W$ and $S\subseteq \mathcal{B}_W$ form a basis of $GOS^{\bullet,\bullet}$ as $\Q$-vector space.

Let $\mathcal{N}{q}$ be the following set:
\[\mathcal{N}{q} \defeq \{(W,S)\in \S_q \times \mathcal{P}_q(E) \mid S \textnormal{ is a no broken circuit of }\S_{\leq W} \}. \]
As shown for the first time by Looijenga in \cite{Looijenga} and then by De Concini and Procesi in \cite{DeConcini}, the Poincaré polynomial of $M(\Delta)$ is:
\begin{equation}\label{poly_char}
P_\Delta (t)= \sum_{q=0}^r |\mathcal{N}_q| (1+t)^{r-q}t^q
\end{equation}

The number of generators is equal to:
\[ \sum_{q=0}^{r} 2^{r-q}|\mathcal{N}_q|\]
which is exactly the value $P_\Delta(1)$ of the Poincaré polynomial.
\end{proof}

\begin{example}
Consider the toric arrangement in $(\C^*)^2$ of four hypertori having equations $H_1=\{x=1\}$, $H_2=\{y=1\}$, $H_3=\{xy^3=1\}$ and $H_4= \{xy^2= \zeta_3\}$, where $\zeta_3$ is a $3^{\textnormal{th}}$ root of unity.
The layers of rank two are $5$ points:
\begin{align*}
p &=(1,1) = H_1 \cap H_2 \cap H_3 \\
q &=(1, \zeta_3^2)=H_1 \cap H_3 \cap H_4 \\
r &=(1, \zeta_3) \subset H_1 \cap H_3 \\
s &=(\zeta_3,1) = H_2 \cap H_4 \\
t &=(1, -\zeta_3) \subset  H_1 \cap H_4
\end{align*}

An additive basis for the cohomology algebra of the complement is formed by
\[ e_1, \, e_2,\, y_1,\, y_2,\, y_3,\, y_4\]
in degree one and by
\[e_{12},\, e_2y_1,\, e_1y_2,\, e_2y_3,\, e_2y_4,\, y_{p,13},\, y_{p,23},\, y_{q,14},\, y_{q,34},\, y_{r,13},\, y_{s,24},\, y_{t,14} \]
in degree two.
\end{example}

\section{Coverings of tori}
\label{sect_coverings of tori}

Let $F$ be a finitely generated free $\Z$-module and $G$ a finite abelian group.
The extensions $X$:
\[ 0 \rightarrow F \rightarrow X \rightarrow G \rightarrow 0\]
are parametrized by $\Ext^1(G,F)$ up to equivalence.
We want to characterize the cardinality of the torsion subgroup $|\Tor X|$.
The torsion subgroup is isomorphic to its image in $G$ that we will call $H$.

\begin{lemma}
Let $F$ be a free $\Z$-module. Then the contravariant functor $\Ext^1( \cdot, F)$ from finite abelian groups to $\Z$-modules is an exact functor.
\end{lemma}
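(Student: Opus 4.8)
The plan is to exploit the fact that every finite abelian group admits a finite free resolution of length one, so that $\Ext^1(\,\cdot\,,F)$ can be computed from a short presentation. Concretely, given a finite abelian group $G$, I would fix a presentation
\[ 0 \longrightarrow \Z^k \stackrel{P}{\longrightarrow} \Z^k \longrightarrow G \longrightarrow 0, \]
where $P$ is an integer matrix with $\det P\neq 0$ (this exists because $G$ is finite, so a minimal generating set and the lattice of relations have the same rank). Applying $\Hom(\,\cdot\,,F)$ and using that $F$ is free (hence $\Hom(\Z^k,F)\cong F^k$ and the functor $\Hom(\Z^k,\,\cdot\,)$ is exact), I get a complex $F^k \stackrel{P^{\mathsf t}}{\longrightarrow} F^k$ whose cohomology in degree one is $\Ext^1(G,F)=\coker(P^{\mathsf t}\colon F^k\to F^k)$ and whose degree-zero cohomology $\Hom(G,F)=\ker(P^{\mathsf t})$ vanishes since $\det P^{\mathsf t}=\det P\neq 0$ and $F$ is torsion-free.

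The key step is then to check exactness on a short exact sequence $0\to G'\to G\to G''\to 0$ of finite abelian groups. Contravariant $\Ext^1(\,\cdot\,,F)$ always fits into the long exact sequence
\[ 0 \to \Hom(G'',F)\to \Hom(G,F)\to \Hom(G',F)\to \Ext^1(G'',F)\to \Ext^1(G,F)\to \Ext^1(G',F)\to \Ext^2(G',F)\to\cdots \]
Now $\Hom(G'',F)=\Hom(G,F)=\Hom(G',F)=0$ because $F$ is free and the three groups are torsion, and $\Ext^2(G',F)=0$ because $G'$ has a free resolution of length $1$ over the PID $\Z$, so $\Ext^i$ vanishes for $i\geq 2$. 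Therefore the long exact sequence degenerates to a short exact sequence
\[ 0 \to \Ext^1(G'',F)\to \Ext^1(G,F)\to \Ext^1(G',F)\to 0, \]
which is exactly the assertion that $\Ext^1(\,\cdot\,,F)$ is an exact functor. (Equivalently, one can phrase this as: $\Ext^1(\,\cdot\,,F)$ is right exact because $\Ext^2$ vanishes over $\Z$, and left exact because $\Hom(G,F)=0$ for $G$ torsion and $F$ torsion-free.)

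I do not expect a serious obstacle here; the only point requiring a little care is the vanishing of the connecting-map domain and codomain, i.e. being precise that all the $\Hom$ groups in the relevant long exact sequence are zero (using torsion-freeness of $F$) and that $\Ext^{\geq 2}$ vanishes (using global dimension one of $\Z$). Once these two vanishing facts are in place, exactness of $\Ext^1(\,\cdot\,,F)$ on short exact sequences is immediate, and additivity on direct sums is automatic for any $\Ext$ functor, so the functor is exact in the required sense.
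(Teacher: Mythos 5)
Your argument is correct and is essentially the paper's own proof: both apply the long exact sequence of $\Ext(\cdot,F)$ to a short exact sequence of finite abelian groups and conclude using the two vanishing facts $\Hom(G,F)=0$ (torsion group into a torsion-free module) and $\Ext^2=0$ (global dimension one of $\Z$). The preliminary paragraph computing $\Ext^1(G,F)$ from a square presentation matrix is harmless but not needed for the exactness claim.
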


\begin{proof}
A short exact sequence of finite abelian groups $0 \rightarrow H \xrightarrow{i} G \xrightarrow{p} K \rightarrow 0 $ produces a short exact sequence
\[0 \rightarrow \Ext^1 (K,F) \xrightarrow{p^*} \Ext^1 (G,F) \xrightarrow{i^*} \Ext^1 (H,F) \rightarrow 0 \]
since $\Hom (H,F)=0$ and $\Ext^2(K,F)=0$ for all free $\Z$-module $F$ and all finite group $H$.
\end{proof}

\begin{lemma}\label{lemma_max_subgr}
Let $F$ be a free $\Z$-module, $G$ a finite abelian group and $x$ an element of $\Ext^1(G,F)$. Then:
\begin{enumerate}
\item There exists a unique subgroup $H\hookrightarrow G$,
 maximal among all subgroups $H'$ such that $i_{H'}^*(x)=0$.
\item There exists a unique quotient $G \twoheadrightarrow K$, minimal among all quotients $K'$ such that $x \in \im p_{K'}^*$.
\end{enumerate}
Moreover, such groups form an exact sequence $0 \rightarrow H \rightarrow G \rightarrow K \rightarrow 0 $.
\end{lemma}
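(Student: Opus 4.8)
The plan is to derive both statements formally from the exactness of the contravariant functor $\Ext^1(\cdot,F)$ on finite abelian groups established in the previous lemma, using in addition a single elementary short exact sequence.

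First I would prove (1). Let $\Sigma$ be the set of subgroups $H'\le G$ with $i_{H'}^*(x)=0$; it is nonempty, since it contains the trivial subgroup. The crucial point is that $\Sigma$ is closed under sums. Given $H_1,H_2\in\Sigma$, the sequence
\[ 0 \to H_1\cap H_2 \xrightarrow{\ d\mapsto(d,-d)\ } H_1\oplus H_2 \xrightarrow{\ q\colon (a,b)\mapsto a+b\ } H_1+H_2 \to 0 \]
is exact, so applying the exact functor $\Ext^1(\cdot,F)$ and using additivity of $\Ext^1$ shows that the induced map $q^*\colon\Ext^1(H_1+H_2,F)\to\Ext^1(H_1,F)\oplus\Ext^1(H_2,F)$ is injective. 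By functoriality, $q^*$ carries $i_{H_1+H_2}^*(x)$ to $\bigl(i_{H_1}^*(x),\,i_{H_2}^*(x)\bigr)=(0,0)$, whence $i_{H_1+H_2}^*(x)=0$ and $H_1+H_2\in\Sigma$. Since $G$ is finite, $H\defeq\sum_{H'\in\Sigma}H'$ lies in $\Sigma$ and is its unique maximal element.

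Next I would prove (2) by dualizing. For a surjection $p_{K'}\colon G\twoheadrightarrow K'$ with kernel $N'$, applying $\Ext^1(\cdot,F)$ to $0\to N'\to G\to K'\to 0$ gives, by exactness at the middle term, $\im p_{K'}^*=\ker i_{N'}^*$; hence $x\in\im p_{K'}^*$ if and only if $i_{N'}^*(x)=0$, which by (1) happens exactly when $N'\subseteq H$. Ordering the quotients of $G$ by inclusion of their kernels, the quotient $K\defeq G/H$ is therefore the unique minimal quotient with $x\in\im p_K^*$. The closing assertion is then immediate: the sequence $0\to H\to G\to K\to 0$ is nothing but $0\to H\to G\to G/H\to 0$.

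The one step demanding genuine care is the identification $q^*\bigl(i_{H_1+H_2}^*(x)\bigr)=\bigl(i_{H_1}^*(x),i_{H_2}^*(x)\bigr)$ in the closure argument: one must check that pulling $x$ back along $H_1+H_2\hookrightarrow G$, then along $q$, and then along the two coordinate inclusions $H_i\hookrightarrow H_1\oplus H_2$, agrees with restricting $x$ directly along $H_i\hookrightarrow G$. This reduces to noting that the composite $H_i\hookrightarrow H_1\oplus H_2\xrightarrow{q}H_1+H_2\hookrightarrow G$ is the inclusion $H_i\hookrightarrow G$, and then invoking functoriality of $\Ext^1$. Beyond that, and beyond the idea of using the Mayer--Vietoris sequence to get closure under sums, everything is formal.
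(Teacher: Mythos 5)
Your proposal is correct and follows essentially the same route as the paper: closure of the family of subgroups with vanishing restriction under sums, obtained by applying the exactness of $\Ext^1(\cdot,F)$ to the surjection $H_1\oplus H_2\twoheadrightarrow H_1+H_2$ (the paper uses exactly this surjection $H\times H'\twoheadrightarrow HH'$), and then deducing the statement on quotients from the equivalence $x\in\im p_{K'}^*\Leftrightarrow i_{\ker p_{K'}}^*(x)=0$ given by exactness at the middle term. Your write-up merely makes explicit a few details (the kernel of $q$, the functoriality check, and taking the sum over all members of the family), which the paper leaves implicit.
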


\begin{proof}
Suppose that, for two subgroups $H$ and $H'$ of $G$, $i_H^*(x)=0$ and $i_{H'}^*(x)=0$.
There is a surjection $H \times H' \twoheadrightarrow HH' <G$ that gives an inclusion $\Ext^1(HH',F) \hookrightarrow \Ext^1(H,F) \times \Ext^1(H',F)$.
The element $i_{HH'}^*(x)$ maps to $(0,0)$ so it must be zero ($i^*_{HH'}(x)=0$ in $\Ext^1(HH',F)$).
The arbitrariness of $H$ and $H'$ gives the first result.

The second point follows from the first making use of the following fact:
for every exact sequence $0 \rightarrow H' \rightarrow G \rightarrow K' \rightarrow 0 $ the element $i^*_{H'}(x)$ is zero if and only if $x\in \im p^*_{K'}$.
\end{proof}

Let $T \rightarrow U$ be a covering of connected tori whose Galois group is the dual of $G$. 
We want to apply the above observations to this group $G$.
Call $\Lambda$ and $\Gamma$ the characters groups of $T$ and $U$ respectively, so that $G$ is isomorphic to $\Lambda / \Gamma$.
Let $\Delta_U=\{(a_e, \chi_e)\}_{e \in E}$ (with $\chi_e \in \Gamma$, $e \in E$) be a surjective toric arrangement in $U$ and $\Delta_T$ the lifted toric arrangement (\ie the arrangement $\{(a_e,\chi_e) \}_{e \in E}$ where  $\chi_e \in \Lambda$ now are thought as characters of $T$). 
We have the two associated multiplicity functions $m_U$ and $m_T$ from $\mathcal{P}(E)$ to $\N$.

\begin{remark}\label{remark_on_G}
For essential toric arrangements the function $m_T$ determines the isomorphism class of the group $G$, hence it depends only on the poset of layers $\S$ or on the arithmetic matroid (in the centred case).
In fact $G$ is the cokernel of the matrix $A$ whose columns are the coordinate of $\chi_e$ in some basis.
Then by the Smith normal form, its isomorphism class depends only on the greatest common divisor of the determinants of the minors of $A$.

The group $G$ can be presented as the cokernel of $D$, where $D$ is the $r\times r$ diagonal matrix with entries $d_i=\frac{e_i}{e_{i-1}}$, where $e_i= \gcd \{ m_T(E) \mid |E|=i \}$.
\end{remark}

We assume only the data of $\Delta_U$ (hence the datum $m_U$) and $m_T$ (the isomorphism class of $G$) are known.
Our purpose is to give an answer to the following question:
how many extensions $\Gamma \hookrightarrow \Lambda'$ giving the combinatorial data of $\Delta_T$ exist?
In order to find the answers, we make the following construction:
for $S\subseteq E$, define $\Gamma_S$ to be the subgroup of $\Gamma$ generated by the characters $\{\chi_e\}_{e\in S}$ and $\Rad_\Gamma (\Gamma_S)$ its radical in the lattice $\Gamma$.
Let $F_S$ be the quotient of $\Gamma$ by $\Rad_\Gamma (\Gamma_S)$ and notice that $F_S$ is a free $\Z$-module.
The exact sequence $0 \rightarrow \Rad_\Gamma (\Gamma_S) \rightarrow \Gamma \rightarrow F_S \rightarrow 0$ of free modules gives, for any finite abelian group $G$ the exact sequence:
\[ 0 \rightarrow \Ext^1(G,\Rad_\Gamma (\Gamma_S)) \rightarrow \Ext^1(G, \Gamma) \xrightarrow{\pi_S} \Ext^1(G,F_S) \rightarrow 0\]
\begin{de}
Consider $x\in \Ext^1(G,\Gamma)$ and call $H_S(x)$ the maximal subgroup $H$ of $G$ given by \cref{lemma_max_subgr} for the elements $\pi_S(x) \in \Ext^1(G,F_S)$.
An element $x\in \Ext^1(G,\Gamma)$ is said \textit{coherent with $\Delta_U$ and $m_T$} if for all $S \subseteq E$ we have $|H_S(x)|= \frac{m_T(S)}{m_U(S)}$.
\end{de}

Call $\mathcal{C}$ the subset of $\Ext^1(G,F_S)$ made by all elements coherent with $\Delta_U$ and $m_T$.
Recall that there is a natural right action of $\Aut(G)$ on $\Ext^1(G,\Gamma)$ and the set $\mathcal{C}$ is an invariant subset for this action.

\begin{lemma} \label{lemma_rivest_tori}
Let $U$ be a torus and $\Gamma$ its character group.
For all finite groups $G$ there exists a correspondence:
\begin{align*}
\faktor{\Ext^1(G,\Gamma)}{\Aut(G)} & \xleftrightarrow{1:1} \{T \rightarrow U \mid \textnormal{with Galois group dual to }G\}\\
[x] & \longmapsto (\Hom (x, \C^*) \to U)
\end{align*}
where $T$ is a torus ($T$ is not necessarily connected).
\end{lemma}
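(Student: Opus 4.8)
The plan is to reduce the statement to the (contravariant) duality $A\mapsto \Hom(A,\C^*)$ between finitely generated abelian groups and possibly disconnected algebraic tori (groups of the form $(\C^*)^k\times F$ with $F$ finite abelian), whose inverse sends a group $Y$ to its group of algebraic characters $\Hom(Y,\C^*)$. Two properties of this duality are all I need: it is exact, because $\C^*$ is divisible and hence an injective $\Z$-module; and it interchanges injective maps of finitely generated abelian groups with finite cokernel and coverings of tori, matching the cokernel of an injection with the kernel (the Galois group) of the dual covering. Note also that $\Hom(X,\C^*)$ is connected exactly when $X$ is torsion-free, which is precisely why $T$ need not be connected in the statement: disconnectedness is measured by the torsion of $X$.

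First I would write down the two maps. Given $x\in\Ext^1(G,\Gamma)$, pick a representative extension $0\to\Gamma\to X\to G\to 0$ and set $T_x\defeq\Hom(X,\C^*)$; dualizing the inclusion $\Gamma\hookrightarrow X$ produces a map $T_x\to\Hom(\Gamma,\C^*)=U$ which, by exactness, is surjective with kernel $\Hom(G,\C^*)$, the dual of $G$. Thus $T_x\to U$ is a covering with the prescribed Galois group. Conversely, given a covering $q\colon T\to U$ whose Galois group is isomorphic to the dual of $G$, apply the character functor to obtain an injection $\Gamma=\Hom(U,\C^*)\hookrightarrow\Hom(T,\C^*)=:X$ with $X/\Gamma\cong\Hom(\ker q,\C^*)$, which is (noncanonically) isomorphic to $G$; a choice of such an isomorphism presents $X$ as an extension of $G$ by $\Gamma$, hence an element of $\Ext^1(G,\Gamma)$.

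Next I would check that these descend to the quotient by $\Aut(G)$ and are mutually inverse. For the forward map: equivalent extensions give isomorphic groups $X$ under $\Gamma$, and twisting the identification of the quotient by an automorphism of $G$ does not change $X$ as a group equipped with the inclusion of $\Gamma$; hence $T_x\to U$ depends only on the $\Aut(G)$-orbit of $x$, up to isomorphism of coverings over $U$. For the backward map: the class obtained in $\Ext^1(G,\Gamma)$ changes by an automorphism of $G$ when the identification $X/\Gamma\cong G$ is changed, and an isomorphism $T\xrightarrow{\sim}T'$ over $U$ dualizes to an isomorphism $X'\xrightarrow{\sim}X$ compatible with the two inclusions of $\Gamma$, i.e.\ to an equivalence of extensions, so the class is independent of the covering within its isomorphism class over $U$. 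Finally, the double-dual isomorphism $\Hom(\Hom(X,\C^*),\C^*)\cong X$ furnished by the duality shows that the two constructions are inverse to one another.

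The main point to get right is the bookkeeping of equivalences: one must verify that \emph{isomorphism of coverings over $U$} corresponds to equivalence of extensions modulo $\Aut(G)$ \emph{only}, and not modulo $\Aut(\Gamma)$ as well. This is exactly where it matters that the base torus $U$, equivalently the lattice $\Gamma$, is fixed throughout: an isomorphism over $U$ must commute with the structure maps, so on characters it restricts to the identity of $\Gamma$, and no ambiguity in $\Gamma$ enters. A secondary, essentially routine, point is that all the duality statements above must be invoked in the possibly disconnected case, i.e.\ allowing $X$ to have torsion; this is the standard theory of diagonalizable groups and requires nothing beyond citing it.
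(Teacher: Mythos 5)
Your argument is correct, and it is essentially the justification the paper itself relies on: the lemma is stated without proof, being the standard consequence of the exact antiequivalence $X\mapsto\Hom(X,\C^*)$ between finitely generated abelian groups and (possibly disconnected) diagonalizable groups, with coverings of $U$ corresponding to finite-cokernel extensions of $\Gamma$. Your bookkeeping of the equivalences — quotienting only by $\Aut(G)$ because isomorphisms over $U$ restrict to the identity on $\Gamma$ — matches exactly how the lemma is used in the proof of \cref{teo_corr_ext}, so nothing is missing.
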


\begin{teo} \label{teo_corr_ext}
Let $\Delta_U$ and $m_T$ be as above.
There exists a natural correspondence
\[ \faktor{\mathcal{C}}{\Aut(G)} \xleftrightarrow{1:1} \{T' \rightarrow U \mid m_{T'}=m_T \} \]
where $m_{T'}$ is the multiplicity function of the arrangement $\Delta_U$ lifted to $T'$ and the right hand side is considered up to isomorphism over $U$.
The bijection is the restriction of the one in \Cref{lemma_rivest_tori}.
\end{teo}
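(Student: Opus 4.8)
The plan is to derive the stated correspondence by restricting the bijection of \Cref{lemma_rivest_tori}. Fix $x\in\Ext^1(G,\Gamma)$, let $0\to\Gamma\to\Lambda'\to G\to 0$ be the extension it represents, and let $T'=\Hom(\Lambda',\C^*)\to U$ be the associated (possibly disconnected) covering; here $\Lambda'$ plays the role of the ``character group'' of $T'$ and $\Delta_{T'}$ is $\Delta_U$ read off inside $\Lambda'$. The whole statement reduces to the identity
\[ m_{T'}(S)=m_U(S)\cdot|H_S(x)| \qquad \text{for all } S\subseteq E. \]
Indeed, granting this, $m_{T'}=m_T$ holds exactly when $|H_S(x)|=\frac{m_T(S)}{m_U(S)}$ for all $S$, i.e.\ exactly when $x\in\mathcal{C}$; since $\mathcal{C}$ is invariant under the $\Aut(G)$-action and \Cref{lemma_rivest_tori} is a natural bijection, it restricts to a natural bijection between $\faktor{\mathcal{C}}{\Aut(G)}$ and $\{T'\to U \mid m_{T'}=m_T\}$, which is the assertion.

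To prove the displayed identity I would argue in three steps. The characters $\{\chi_e\}_{e\in S}$ generate the image of $\Gamma_S$ in $\Lambda'$, and when $\bigcap_{i\in S}H_i$ is non-empty it is a translate of $\Hom(\Lambda'/\Gamma_S,\C^*)$, whose number of connected components is $|\Tor(\Lambda'/\Gamma_S)|$; hence $m_{T'}(S)=|\Tor(\Lambda'/\Gamma_S)|$, and likewise $m_U(S)=|\Tor(\Gamma/\Gamma_S)|=|\Rad_\Gamma(\Gamma_S)/\Gamma_S|$ (the empty case giving $m_{T'}(S)=m_U(S)=0$, so the identity is trivial there). From $\Gamma_S\subseteq\Gamma$ and the extension defining $\Lambda'$ we get $0\to\Gamma/\Gamma_S\to\Lambda'/\Gamma_S\to G\to 0$; since $\Rad_\Gamma(\Gamma_S)/\Gamma_S$ is precisely the torsion of $\Gamma/\Gamma_S$, dividing it out produces
\[ 0\to \Rad_\Gamma(\Gamma_S)/\Gamma_S \to \Lambda'/\Gamma_S \to \Lambda'/\Rad_\Gamma(\Gamma_S) \to 0, \]
with finite left term of order $m_U(S)$, and applying the torsion subgroup functor to this sequence (which is exact here, as the left term is finite and $F_S=\Gamma/\Rad_\Gamma(\Gamma_S)$ is free) yields $m_{T'}(S)=m_U(S)\cdot|\Tor(\Lambda'/\Rad_\Gamma(\Gamma_S))|$. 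Finally, $\Lambda'/\Rad_\Gamma(\Gamma_S)$ is the pushout of $x$ along $\Gamma\twoheadrightarrow F_S$, hence is an extension $0\to F_S\to X\to G\to 0$ classified by $\pi_S(x)$, and one has $|\Tor X|=|H_S(x)|$: the torsion of $X$ meets $F_S$ trivially, so it injects into $G$ with some image $H$, and a subgroup $H'\le G$ satisfies $i_{H'}^*\pi_S(x)=0$ if and only if the pulled-back extension over $H'$ splits, i.e.\ if and only if a copy of $H'$ lies in $\Tor X$; thus $H$ is the maximal such subgroup, which is $H_S(x)$ by \Cref{lemma_max_subgr}. Combining the three steps gives the identity.

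The step I expect to be the main obstacle is this middle computation, specifically the bookkeeping along the chain $\Gamma_S\subseteq\Rad_\Gamma(\Gamma_S)\subseteq\Gamma\subseteq\Lambda'$: one has to justify that passing to torsion subgroups is exact on the sequence above, that $\Lambda'/\Rad_\Gamma(\Gamma_S)$ genuinely represents $\pi_S(x)\in\Ext^1(G,F_S)$ in the sense of the paper, and --- more carefully than in the connected case --- that the count of connected components of $\bigcap_{i\in S}H_i$ inside the possibly disconnected torus $T'$ really equals $|\Tor(\Lambda'/\Gamma_S)|$. Once these points are settled, the identification $|\Tor X|=|H_S(x)|$ is just \Cref{lemma_max_subgr} re-expressed through torsion subgroups, and the theorem follows by the restriction argument of the first paragraph.
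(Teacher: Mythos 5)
Your proposal is correct and takes essentially the same route as the paper: the heart in both cases is the identity $m_{T'}(S)=m_U(S)\cdot|H_S(x)|$, proved by identifying $H_S(x)$ with the torsion of $\Lambda'/\Rad_\Gamma(\Gamma_S)$ through the splitting criterion of \cref{lemma_max_subgr} (your pulled-back-extension argument is exactly the paper's diagram argument) and then counting torsion orders along $\Gamma_S\subseteq\Rad_\Gamma(\Gamma_S)\subseteq\Lambda'$, which is the paper's index equality. The only point the paper makes explicit and you leave implicit is that $m_{T'}=m_T$ forces the Galois group of $T'\to U$ to be dual to $G$ (via \cref{remark_on_G}), which is what guarantees that every covering in the right-hand side lies in the range of the bijection of \cref{lemma_rivest_tori} before you restrict it.
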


\begin{proof}
A standard fact from commutative algebra is the correspondence between the group $\Ext^1(G,F)$ and the extension of the two $\Z$-modules $F$ and $G$:
\[ 0 \rightarrow F \rightarrow X \rightarrow G \rightarrow 0\]
up to equivalence, \ie two extensions $X$ and $X'$ are equivalent if there exists a commutative diagram 
\begin{center}
\begin{tikzpicture}[scale=1.6]
\node (a1) at (-1,0) {$0$};
\node (x) at (0.75,0.5) {$\circlearrowleft$};
\node (x) at (2.25,0.5) {$\circlearrowleft$};
\node (a0) at (-1,1) {$0$};
\node (b0) at (0,1) {$F$};
\node (b1) at (0,0) {$F$};
\node (c1) at (1.5,0) {$X'$};
\node (c0) at (1.5,1) {$X$};
\node (d1) at (3,0) {$G$};
\node (d0) at (3,1) {$G$};
\node (e1) at (4,0) {$0$};
\node (e0) at (4,1) {$0$};
\draw[->] (a1) edge (b1) (b1) edge (c1)(c1) edge (d1)(d1) edge (e1);
\draw[->] (a0) edge (b0) (b0) edge (c0)(c0) edge (d0)(d0) edge (e0);
\draw[double equal sign distance] (b0) -- (b1);
\draw[double equal sign distance] (d0) -- (d1);
\draw[->] (c0) -- (c1);
\end{tikzpicture}
\end{center}
Let $x \in \mathcal{C}$ be a element coherent with $\Delta_U$ and $m_T$; this element gives an extension $0 \rightarrow \Gamma \rightarrow \Lambda_x \rightarrow G \rightarrow 0$.
If $S\subseteq E$ then there is a commutative diagram:
\begin{center}
\begin{tikzpicture}[scale=1.6]
\node (x) at (0.75,0.5) {$\circlearrowleft$};
\node (x) at (2.25,0.5) {$\circlearrowleft$};
\node (a1) at (-1,0) {$0$};
\node (a0) at (-1,1) {$0$};
\node (b0) at (0,1) {$\Gamma$};
\node (b1) at (0,0) {$F_S$};
\node (c1) at (1.5,0) {$\faktor{\Lambda_x}{\Rad_{\Gamma} \Gamma_S}$};
\node (c0) at (1.5,1) {$\Lambda_x$};
\node (d1) at (3,0) {$G$};
\node (d0) at (3,1) {$G$};
\node (e1) at (4,0) {$0$};
\node (e0) at (4,1) {$0$};
\draw[->] (a1) edge (b1) (b1) edge (c1)(c1) edge (d1)(d1) edge (e1);
\draw[->] (a0) edge (b0) (b0) edge (c0)(c0) edge (d0)(d0) edge (e0);
\draw[->] (b0) -- (b1);
\draw[double equal sign distance] (d0) -- (d1);
\draw[->] (c0) -- (c1);
\end{tikzpicture}
\end{center}
Call $\Lambda_S$ the quotient $\Lambda_x / \Rad_{\Gamma} \Gamma_S$; we will show that the group $H_S (x)$ is the torsion subgroup of $ \Lambda_S$.
The exact sequence $0 \rightarrow F_S \rightarrow \Lambda_S \rightarrow G \rightarrow 0$ is represented by the element $\pi_S(x)$, therefore for all $G'$ subgroup of $G$, $i_{G'}(\pi_S(x))$ is zero if and only if the upper short exact sequence of the next diagram splits.
\begin{center}
\begin{tikzpicture}[scale=1.6]
\node (x) at (0.75,0.5) {$\circlearrowleft$};
\node (x) at (2.25,0.5) {$\circlearrowleft$};
\node (a1) at (-1,0) {$0$};
\node (a0) at (-1,1) {$0$};
\node (b0) at (0,1) {$F_S$};
\node (b1) at (0,0) {$F_S$};
\node (c1) at (1.5,0) {$\Lambda_S$};
\node (c0) at (1.5,1) {$X $};
\node (d1) at (3,0) {$G$};
\node (d0) at (3,1) {$G'$};
\node (e1) at (4,0) {$0$};
\node (e0) at (4,1) {$0$};
\draw[->] (a1) edge (b1) (b1) edge (c1)(c1) edge (d1)(d1) edge (e1);
\draw[->] (a0) edge (b0) (b0) edge (c0)(c0) edge (d0)(d0) edge (e0);
\draw[double equal sign distance] (b0) -- (b1);
\draw[right hook->](d0) -- (d1);
\draw[->] (c0) -- (c1);
\end{tikzpicture}
\end{center}
The upper short exact sequence splits if and only if $X$ is included in the subgroup $F_S \times \Tor \Lambda_S$ of $\Lambda_S$.
Indeed, if the upper sequence splits then $ F_S \times G' \simeq X \subseteq \Lambda_S$ and $G'$ is a torsion group, hence included in $\Tor \Lambda_S$.
Viceversa, if $X \subseteq F_S \times \Tor \Lambda_S$ then the projection onto the first factor gives a retraction of $F_S \hookrightarrow X$ and so the sequence splits.

Hence, the maximal subgroup $H_S(x)$ is isomorphic to the torsion subgroup of $\Lambda_S$, that is, $\Rad_{\Lambda_x} \Gamma_S / \Rad_{\Gamma} \Gamma_S$.
The obvious equality:
\[\left| \faktor{\Rad_{\Lambda_x} \Gamma_S}{\Rad_{\Gamma} \Gamma_S} \right|\cdot \left|\faktor{\Rad_{\Gamma} \Gamma_S}{\Gamma_S}\right| = \left|\faktor{\Rad_{\Lambda_x} \Gamma_S}{\Gamma_S} \right| \]
implies the equality $|H_S(x)|m_U (S)=m_{T_x}(S)$, for $T_x$ the covering of $U$ induced by the inclusion of the character groups $\Gamma \hookrightarrow \Lambda_x$.
Since $x$ is a coherent element, we obtain the equality $m_{T_x}=m_T$.
If $x$ and $x'$ in $\mathcal{C}$ are in the same orbit, then there exist two commutative diagrams
\begin{center}
\begin{tikzpicture}[scale=1.6]
\node (a) at (0,0) {$\Gamma$};
\node (b) at (1,0.5) {$\Lambda_x$};
\node (c) at (1,-0.5) {$\Lambda_{x'}$};
\node (a1) at (3.5,-0.5) {$U$};
\node (b1) at (3,0.5) {$T_{x'}$};
\node (c1) at (4,0.5) {$T_{x}$};
\draw[right hook->] (a) -- (b);
\draw[right hook->] (a) -- (c);
\draw[->] (b) -- (c);
\draw[->>] (b1) -- (a1);
\draw[->>] (c1) -- (a1);
\draw[->] (b1) -- (c1);
\end{tikzpicture}
\end{center}
This implies that $x$ and $x'$ induce equivalent extensions and we have a well defined map $\mathcal{C} / \Aut(G) \rightarrow \{T' \rightarrow U \mid m_{T'}=m_T \}$.

The surjectivity of the maps follows by taking a covering $T' \rightarrow U$ and the corresponding extension $\Gamma \hookrightarrow \Lambda'$.
Our hypothesis $m_{T'}=m_T$ implies that the cokernel is isomorphic to $G$, so $\Lambda'$ is represented by a coherent element of $\Ext^1(G,\Gamma)$.

The injectivity follows from the correspondence of \cref{lemma_rivest_tori}.
\end{proof}

\begin{cor} \label{cor_invariante_centrato}
For any centred toric arrangement, the data of the poset $\S$ together with $x \in \mathcal{C} / \Aut(G)$ are a complete invariant system for the arrangement up to automorphism of the torus.
\end{cor}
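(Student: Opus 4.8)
The plan is to combine Theorem \ref{teo_corr_ext} with the uniqueness of representations established in Section \ref{sect_On uniqueness of representability}. First I would fix the reduction: given a centred toric arrangement $\Delta$ in $T$ one may assume it essential, the general case reducing to this as in Section \ref{sect_notation}. Let $\Gamma \le \Lambda$ be the sublattice generated by the characters $\{\chi_e\}_{e \in E}$, let $U = \Hom(\Gamma, \C^*)$, and $G = \Lambda/\Gamma$; the inclusion $\Gamma \hookrightarrow \Lambda$ corresponds to a covering $T \to U$ with Galois group dual to $G$, and $\Delta$ is precisely the lift $\Delta_T$ of the surjective centred arrangement $\Delta_U = \{(1,\chi_e)\}_{e \in E}$ in $U$. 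This extension is classified by an element $x \in \Ext^1(G,\Gamma)$; the computation $|H_S(x)|\, m_U(S) = m_{T_x}(S)$ carried out in the proof of Theorem \ref{teo_corr_ext}, together with the fact that here $T_x = T$ so $m_{T_x} = m_T$, shows that $x$ is coherent with $\Delta_U$ and $m_T$. Hence $[x] \in \mathcal{C}/\Aut(G)$ is a well-defined function of $\Delta$, and so is $\S = \S(\Delta)$, which gives the assignment $\Delta \mapsto (\S, [x])$.

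The heart of the proof is to reconstruct $\Delta$, up to automorphism of $T$, from the pair $(\S, [x])$. From $\S$ one reads off the underlying matroid $(E, \rk)$ and the multiplicity function $m_T$ (as the number of connected components), hence the isomorphism type of $G$ by Remark \ref{remark_on_G}. From the coherence condition defining $\mathcal{C}$, the orbit $[x]$ determines the integers $|H_S(x)| = m_T(S)/m_U(S)$ for all $S \subseteq E$ (these depend only on $[x]$, since $H_S(\cdot)$ is $\Aut(G)$-equivariant), and therefore recovers $m_U(S) = m_T(S)/|H_S(x)|$. The triple $(E, \rk, m_U)$ is a representable, surjective, torsion-free arithmetic matroid, so by Theorem \ref{teo_uniq_repr} it admits a unique essential representation up to sign reversal of columns and automorphism of $\Z^r$; this recovers the characters $\chi_e \in \Gamma \cong \Z^r$, that is, the arrangement $\Delta_U$ in $U$ up to torus automorphism. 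Finally, under the bijection of Theorem \ref{teo_corr_ext} between $\mathcal{C}/\Aut(G)$ and the coverings $T' \to U$ with $m_{T'} = m_T$, the class $[x]$ singles out the covering $T \to U$, that is, the lattice $\Gamma \hookrightarrow \Lambda$; lifting the $\chi_e$ to $\Lambda$ returns $\Delta = \Delta_T$ up to automorphism of $T$.

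The point requiring care is not the geometry, which is an immediate consequence of Theorems \ref{teo_corr_ext} and \ref{teo_uniq_repr} once the combinatorial data are extracted, but the bookkeeping that makes "two arrangements with the same invariant" precise: an isomorphism of posets $\S(\Delta) \cong \S(\Delta')$ must be transported through the induced identifications of the ground sets, of $G$ with $G'$, and of $\Gamma$ with $\Gamma'$ (each abstractly $\cong \Z^r$), under which the classes $[x]$ and $[x']$ are then compared; and one must verify that the reconstruction above does not depend on the auxiliary choices hidden in Theorem \ref{teo_uniq_repr} (a basis $\B$, a bijection $E \simeq [n]$, a maximal forest $\A$, and the chosen identification $\Gamma \cong \Z^r$). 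Once this is in place, injectivity of $\Delta \mapsto (\S, [x])$ follows, which is exactly the assertion of the corollary.
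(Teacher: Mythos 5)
Your proposal is correct and takes essentially the same route as the paper, which states this corollary without a separate proof as a direct combination of \cref{teo_corr_ext} with the uniqueness results of \cref{sect_On uniqueness of representability} (\cref{teo_uniq_repr}, \cref{cor_unic_centred_TA}) — exactly the two ingredients you assemble, including the reduction to the essential case and the bookkeeping caveats. The only cosmetic difference is that you recover $m_U$ from the coherence of $[x]$, whereas the paper reads it off from $\S$ alone via the normal-form matrix (\cref{uniq_rat_poset}, \cref{lemma_abs_det}) — which is in any case how $\Gamma$, $G$ and hence $\mathcal{C}$ are made meaningful before $[x]$ is given — so your detour is redundant but harmless.
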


\begin{example}

We continue \Cref{ex:running}.

The group $G$ associated to the arithmetic matroid is the cokernel of the matrix $\left( \begin{smallmatrix}
1 & 0 \\
0 & 5
\end{smallmatrix} \right)$, hence $G \simeq \Z/5\Z$.
The matrix $\left( \begin{array}{c|c}
\id_2 & A
\end{array} \right)$ describes the three vectors $e_1, e_2, \frac{1}{2}(5e_1+3e_2)$ in the vector space $\Q^2$; let $\Gamma$ be the lattice in $\Q^2$ generated by these three vectors.

The surjective arithmetic matroid associated to $([3],\rk,m)$ is the arithmetic matroid that represents the vectors $e_1, e_2, \frac{1}{2}(5e_1+3e_2)$ in $\Gamma$.
The multiplicity function $m_U$ is defined by:
\begin{align*}
& m_U(\emptyset)=1\\
& m_U(e)=1  \quad \textnormal{for } e=1,2,3 \\
& m_U(1,2)= 2 \\
& m_U(1,3)= 3 \\
& m_U(2,3)= 5 \\
& m_U(1,2,3)=1
\end{align*}

The group $\Ext^1(G,\Gamma)$ is isomorphic to $(\Z/5\Z)^2$.
We look for the elements $x \in \Ext^1(G,\Gamma)$ which are coherent with the arithmetic matroid.
Notice that the subgroups of $G$ are only $0$ and $G$ (this is not true in general) and imposing the coherence conditions for $x$ yields:
\begin{gather*}
|H_\emptyset(x)|=\frac{m(\emptyset)}{m_U(\emptyset)}= 1 \Leftrightarrow H_\emptyset(x)=0 \Leftrightarrow x \neq 0 \in \Ext^1(G,\Gamma)\\
|H_i(x)|=\frac{m(i)}{m_U(i)}= 1 \Leftrightarrow H_i(x)=0 \Leftrightarrow \pi_i(x) \neq 0
\end{gather*}
The last implication holds for $i=1,2,3$.
If we identify $x\in \Ext^1(G,\Gamma)$ with pairs $(a,b)$ such that $a,b \in \Z/5\Z$ the conditions become, respectively:
\[ \pi_1(x)= b \neq 0 \qquad \pi_2(x)=a \neq 0 \qquad \pi_3(x)=3a-5b \neq 0\]
For the remaining subsets $S\subset [3]$, which are all of rank two, we have that $\rad_\Gamma \Gamma_S$ coincides with the whole lattice $\Gamma$.
In particular $\pi_S(x)=0$ and $H_S(x)=G$, therefore these conditions of coherence are always satisfied.

Summing all up, the coherent elements are $\mathcal{C}=\{(a,b)| a,b \neq 0\}$ (notice that the conditions $a \neq 0$ and $3a-5b \neq 0$ are equivalent) and the set $\mathcal{C} / \Aut(G)$ coincides with $\{a \in G | a \neq 0\}$ (take a representative with $b=3$).

We are going to built a representation of the arithmetic matroid for each element of $\mathcal{C} / \Aut(G)$, such that any two of them are non-isomorphic.
Let $\Lambda_x$ be the lattice in $\Q^2$ generated by $e_1, e_2, \frac{1}{2}(5e_1+3e_2), \frac{1}{5} (a e_1 + 3e_2)$ (recall that $x=(a,3)$).
A basis for $\Lambda_x$ is given by
\[\Big\{ e_1, \frac{1}{10}((2a-5)e_1+e_2) \Big\}\]
where $a \in \{1, \dots, 4\}$.
The three vectors $e_1, e_2, \frac{1}{2}(5e_1+3e_2)$ have coordinates in the basis above given by the matrix:
\[ C_a=\left( \begin{array}{ccc}
1 & 2a-5 & 3a-5 \\
0 & 10 & 15
\end{array} \right)\]
These are all the representations of the initial arithmetic matroid, up to left action of $\gl(2;\Z)$ and sign reverse of the columns.

In fact, the initial representation is equivalent to $C_2$:
\[ \left( \begin{array}{ccc}
-2 & -32 & -43 \\
1 & 21 & 29
\end{array} \right)=\left( \begin{array}{cc}
2 & -3  \\
-1 & 2
\end{array} \right)\left( \begin{array}{ccc}
1 & -1 & 1 \\
0 & 10 & 15
\end{array} \right)
\left( \begin{array}{ccc}
-1 &  &  \\
 & 1 & \\
& & 1
\end{array} \right)\]
\end{example}

\section{Cohomology algebra with integer coefficients}
\label{sect_cohom_int_coeff}
In this section we introduce the bigraded algebra $GOS^{\bullet,\bullet }(M(\Delta);\Z)$ of a toric arrangement $\Delta$, which turns out to be isomorphic to $\gr H^\bullet(M(\Delta);\Z)$, studied in \cref{sect_OS_algebra}.
In order to define this new algebra, we will use the data given by the poset of layers $\S$ and the coherent element $x\in \mathcal{C}/\Aut (G)$ associated to $\Delta$.
We will find that $GOS^{\bullet,\bullet }(M(\Delta);\Z)$ is an analogue (with integer coefficients) of the algebra $GOS^{\bullet,\bullet }(M(\Delta);\Q)$ we have seen in \cref{sect_Leray_ss}: in fact, $GOS^{\bullet,\bullet }(M(\Delta);\Z) \otimes_\Z \Q \simeq GOS^{\bullet,\bullet }(M(\Delta);\Q)$.

Let $\Delta=\{(a_e,\chi_e)\}_{e \in E}$ be a toric arrangement in normal form in the torus $T$ and $\S$ be its poset of layers.
Consider the character group $\Lambda$ of $T$ and its subgroup $\Gamma$ generated by $\{ \chi_e \}_{e\in E}$;
the inclusion $\Gamma \hookrightarrow \Lambda$ corresponds to a covering $T \rightarrow U$, where $U$ is the torus associated to $\Gamma$.
The group of covering automorphisms of $T\rightarrow U$ is $G=\Lambda/ \Gamma$; the hypertori $H_e=\mathcal{V}_T(1-a_e\chi_e)$ are stable under the action of $G$ and therefore each of them defines a hypertorus $H_e'=\mathcal{V}_U(1-a_e\chi_e) \subset U$.
Call $\Delta_U=\{(a_e, \chi_e)\}_{e \in E}$ the surjective toric arrangement given by $H'_e$ in $U$.

The poset $\S$ allows us to determine a lattice $\Gamma'$ and vectors $\{v_e\}_{e \in E} \subset \Gamma'$ such that $\Gamma \simeq \Gamma'$ through an isomorphism mapping each $v_e$ to $\chi_e$.
Since $\S$ determines the isomorphism class of the group $G$ (see, \Cref{remark_on_G}) and hence the conditions of coherence, the set $\mathcal{C}/ \Aut(G)$ can be recovered from $\S$.

Suppose now that we know the coherent element $x \in \mathcal{C} / \Aut(G)$ corresponding to $\Delta$.
From $x$ we build an extension $\Lambda'$ of $\Gamma'$ such that $\Lambda \simeq \Lambda'$ through an isomorphism that restricts to the previous one $\Gamma \simeq \Gamma'$.
With a slight abuse of notation, the lattices $\Lambda'$ and $\Gamma'$ will be called $\Lambda$ and $\Gamma$ as well.

Define the free $\Z$-module $V_\Z$ as the module $\bigwedge^\bullet \Lambda [ \{ y_{W,S} \}_{W,S}]$ with $S$ circuits associated to $W$.
This module is the direct sum of some copies of the external algebra $\bigwedge^\bullet \Lambda$ and can be endowed with a product $\cdot$ that coincides with the wedge product on the external algebra $\bigwedge ^\bullet \Lambda$ and is defined on the generators by
\[ y_{W,S} \cdot y_{W',S'}= 0 \]
if the tuple $SS'$ is dependent, or by
\[ y_{W,S} \cdot y_{W',S'} = \sum_{W''} y_{W'',SS'} \]
otherwise, where $W''$ runs over all connected components of $W \cap W'$.
Let the elements of $\Lambda$ be homogeneous of degree $(1,0)$ and $y_{W,S}$ be of degree $(0,|S|)$, so that $V_\Z$ is a bigraded $\Z$-algebra.

Define the map $\tilde{\psi}: V_\Z \rightarrow A^{\bullet, \bullet}(\Delta)$ (recall \cref{prod}) of bigraded algebras that sends $\lambda \in \Lambda$ to $\lambda^*(\omega) \otimes 1$ in the direct summand $H^1(T;\Z) \otimes H^0(\C^r;\Z)$ of $A^{\bullet, \bullet}(\Delta)$ and $y_{W,S}$ to $1 \otimes f_S$, where $f_S$ is the generator of the Orlik-Solomon algebra isomorphic to $H^{\rk W} (M(\Delta[W]);\Z)$.

Let $J_\Z$ be the ideal generated by the three relations:
\begin{enumerate}[1')]
\item $y_{W,S}=(-1)^{\sgn \sigma}y_{W,\sigma S}$ for all permutation $\sigma \in \mathfrak{S}_{|S|}$. \label{eq_reorder_Z}
\item $\lambda \cdot y_{W,S}=0$ for all $\lambda \in \Rad_\Lambda \Gamma_S$. \label{eq_ovvia_Z}
\item $\sum_{i=0}^{\rk W} (-1)^i y_{W,S\setminus s_i}=0$ for all circuits $S=\{s_0, \cdots, s_{\rk W}\}$ associated to $W$. \label{eq_relazioni_int}
\end{enumerate}

\begin{de} \label{def_GOS_int}
The graded toric Orlik-Solomon of the toric arrangement $\Delta$ with integer coefficients is $GOS^{\bullet,\bullet}(M(\Delta);\Z)$ defined as the quotient of $V_\Z$ by the ideal $J_\Z$.
\end{de}

\begin{lemma}
The algebra $GOS^{\bullet,\bullet}(M(\Delta);\Z)$ is a torsion free $\Z$-module.
\end{lemma}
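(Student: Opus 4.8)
The plan is to prove the stronger statement that $GOS^{\bullet,\bullet}(M(\Delta);\Z)$ is a \emph{free} $\Z$-module; being finitely generated, this is equivalent to torsion-freeness, but freeness is most easily obtained by a rank count. First I would produce an explicit spanning set of the expected size. Fix, for each layer $W$, a $\Z$-basis $\nu^W_1,\dots,\nu^W_{\dim W}$ of $\Lambda/I_W$, which is legitimate because $I_W$, being the kernel of the restriction of characters to the layer, is a saturated sublattice, so $\Lambda/I_W$ is free. I claim the elements $\nu^W_{i_1}\wedge\cdots\wedge\nu^W_{i_p}\cdot y_{W,S}$, with $S$ a no-broken-circuit set associated to $W$, span $GOS^{\bullet,\bullet}(M(\Delta);\Z)$ over $\Z$. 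To see this one mimics the proof of \Cref{lemma_generators}: relation \ref{eq_reorder_Z} lets one assume $S$ standard; for every $S$ associated to $W$ one has $\Rad_\Lambda\Gamma_S=I_W$ (both are saturated sublattices of $\Lambda$ of rank $\rk W$ and $\Gamma_S\subseteq I_W$), so relation \ref{eq_ovvia_Z} collapses the exterior factor to $\bigwedge^\bullet(\Lambda/I_W)$, where it becomes a $\Z$-combination of wedges of the chosen basis vectors; and relations \ref{eq_relazioni_int}, used together with the product rule $y_{W',S'}\cdot y_{W'',S''}=\sum y_{W''',S'S''}$, carry out the Orlik--Solomon no-broken-circuit reduction of $y_{W,S}$ (a broken circuit $B\subseteq S$ coming from a circuit $C$ of smaller rank is resolved by writing $y_{W,S}$ as a product of a factor supported on the layer associated to $B$ and one supported on $S\setminus B$, and applying relation \ref{eq_relazioni_int} for $C$ at that lower layer).

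The cardinality of this spanning set is $\sum_{W\in\S}2^{\dim W}\cdot\#\{S : S \text{ a no-broken-circuit set associated to }W\}$, which equals $P_\Delta(1)$, the $\Q$-dimension of $GOS^{\bullet,\bullet}(M(\Delta);\Q)$ as computed from \Cref{lemma_generators} and \Cref{thm:main_on_rational}. On the other hand, tensoring the defining presentation with $\Q$ is exact, and (as announced at the start of this section) $GOS^{\bullet,\bullet}(M(\Delta);\Z)\otimes_\Z\Q\simeq GOS^{\bullet,\bullet}(M(\Delta);\Q)$, relation \ref{eq_ovvia_Z} rationally encoding relations \ref{eq_ovvia} and \ref{eq_nel_toro}; hence the rational rank of $GOS^{\bullet,\bullet}(M(\Delta);\Z)$ equals the number of elements in the spanning set. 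A finitely generated $\Z$-module that can be generated by as many elements as its rank is free: writing it as $\Z^N/K$ with $N$ the rank and tensoring with $\Q$ forces $K$ to be torsion, hence $K=0$. Therefore $GOS^{\bullet,\bullet}(M(\Delta);\Z)$ is free, in particular torsion-free.

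I expect the main obstacle to be the no-broken-circuit reduction inside $GOS^{\bullet,\bullet}(M(\Delta);\Z)$, that is, checking that the ``local'' relations \ref{eq_reorder_Z}--\ref{eq_relazioni_int}, through repeated use of the product rule, really suffice to rewrite an arbitrary $y_{W,S}$ in terms of the no-broken-circuit ones: the subtlety is that a broken circuit in $S$ may come from a circuit of rank strictly smaller than $\rk W$, and one must transport the corresponding relation up to $W$ via products with factors on complementary layers, verifying that no spurious cancellation occurs when a concatenation $SS'$ becomes dependent. A more structural alternative is to argue that $J_\Z$ is a submodule graded by the layer $W$, which would yield the decomposition $GOS^{\bullet,\bullet}(M(\Delta);\Z)\cong\bigoplus_{W\in\S}\bigwedge^\bullet(\Lambda/I_W)\otimes_\Z OS^{\rk W}(\Delta[W];\Z)$, each summand being visibly free; but establishing that layer-grading of $J_\Z$ is itself delicate and rests, again, on the compatibility of the product with the Brieskorn inclusions already exploited in \Cref{lemma_prod}.
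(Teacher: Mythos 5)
Your plan is correct in outline, but it is a genuinely different route from the paper's. The paper does no rank count at all: it notes that the elements $y_{W,T}$ with $T$ a no broken circuit associated to $W$ generate $GOS^{\bullet,\bullet}(M(\Delta);\Z)$ as a module over $\bigwedge^\bullet\Lambda$, so the algebra is the quotient of the free module $\bigwedge^\bullet\Lambda[\{y_{W,T}\}_{(W,T)\in\mathcal{N}}]$ by the submodule $N$ generated by the relations $\lambda\cdot y_{W,T}$, $\lambda\in\Rad_\Lambda\Gamma_T$; since each $\Rad_\Lambda\Gamma_T$ is a saturated (direct summand) sublattice, the quotient is $\bigoplus_{(W,T)\in\mathcal{N}}\bigwedge^\bullet\bigl(\Lambda/\Rad_\Lambda\Gamma_T\bigr)$, which is free, equivalently $mx\in N$ forces $x\in N$, and torsion-freeness follows at once. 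This is precisely the ``structural alternative'' you sketch and set aside in your last paragraph. Your route instead combines a $\Z$-spanning set of cardinality $P_\Delta(1)$ (your use of a $\Z$-basis of $\Lambda/I_W$ and the identification $\Rad_\Lambda\Gamma_S=I_W$ are correctly justified) with the equality of that number with the rank, via $GOS^{\bullet,\bullet}(M(\Delta);\Z)\otimes_\Z\Q\simeq GOS^{\bullet,\bullet}(M(\Delta);\Q)$ and \cref{thm:main_on_rational}, and the standard fact that a finitely generated $\Z$-module admitting rank-many generators is free. What each buys: your argument needs only the spanning statement (essentially \cref{lemma_generetors_int} with $\mathcal{B}_W$ upgraded to a $\Z$-basis) and delivers freeness with an explicit basis, but it imports the rational theory; here you must prove the isomorphism $GOS_\Z\otimes\Q\simeq GOS_\Q$ directly by comparing presentations, as your parenthetical remark indicates, and not by citing the announcement at the head of the section, since the paper's justification of that announcement passes through \cref{main_on_integers} and hence through the very lemma being proved. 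The paper's argument avoids the rational input but implicitly asserts slightly more than spanning, namely that the only relations among the NBC generators are those in $N$. Finally, the no-broken-circuit reduction you flag as the main obstacle (transporting a circuit relation of lower rank up to $W$ through the product) is indeed the delicate step, but the paper leaves it at the same level of detail (\cref{lemma_generetors_int} is proved ``analogously'' to \cref{lemma_generators}), so your proposal is on equal footing there.
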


\begin{proof}
The elements $y_{W,S}$ with $S$ no broken circuit associated to $W$ generate $GOS^{\bullet,\bullet}(M(\Delta);\Z)$ as $\bigwedge^\bullet \Lambda$-algebra.
Consequently, the module $GOS^{\bullet,\bullet}$ is the quotient of $\bigwedge^\bullet \Lambda [ \{ y_{W,S} \}_{(W,S) \in \mathcal{N}}]$ by the submodule $N$ generated by the elements $\lambda \cdot y_{W,S}$ for all $S$ associated to $W$ and $\lambda \in \Rad_\Lambda \Gamma_S$ (\ref{eq_ovvia_Z}).
The module $N$ has the property that for all integer $m$, if $m x$ belongs to $N$, then $x\in N$.
Thus $GOS^{\bullet,\bullet}(M(\Delta);\Z)$ is a torsion-free $\Z$-module.
\end{proof}
The map $\tilde{\psi}$ passes to the quotient by $J_\Z$, in fact an easy check shows that $J_\Z$ is contained in $\ker \tilde{\psi}$.
Call $\psi$ the induced map $GOS^{\bullet ,\bullet}(M(\Delta);\Z) \rightarrow A^{\bullet, \bullet}$.

Recall \cref{def_GOS} of $\mathcal{B}_W$ for $W \in \S$.
\begin{lemma} \label{lemma_generetors_int}
For any toric arrangement $\Delta$, the elements $(\prod_{\lambda \in S'} \lambda) \cdot y_{W,S}$, with $S$ no broken circuits associated to $W$ and $S'$ a subset of $\mathcal{B}_W$, generate $GOS^{\bullet,\bullet}(M(\Delta);\Z)$ as $\Z$-module.
\end{lemma}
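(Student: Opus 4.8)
The plan is to adapt the argument of \cref{lemma_generators} to integral coefficients. As follows from the reasoning in the proof that $GOS^{\bullet,\bullet}(M(\Delta);\Z)$ is a torsion-free $\Z$-module, this algebra is generated as a $\bigwedge^\bullet\Lambda$-module by the symbols $y_{W,S}$ with $S$ a no broken circuit associated to $W$. Hence it is enough to show that, for every such pair $(W,S)$, an arbitrary element $\mu\cdot y_{W,S}$ with $\mu\in\bigwedge^\bullet\Lambda$ can be rewritten, modulo $J_\Z$, as a $\Z$-linear combination of the elements $\bigl(\prod_{b\in S'}\chi_b\bigr)\cdot y_{W,S}$ with $S'\subseteq\mathcal{B}_W$.

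Fix $(W,S)$ with $S$ a no broken circuit associated to $W$. Because $S$ is independent and $W$ is a connected component of $\bigcap_{i\in S}H_i$, one has $|S|=\rk W$ and $\Rad_\Lambda\Gamma_S=I_W$: both are the saturated sublattice of $\Lambda$ consisting of the characters trivial on the translate to the origin of $W$, and both have rank $\rk W$. Consequently relation \ref{eq_ovvia_Z} becomes $\lambda\cdot y_{W,S}=0$ for all $\lambda\in I_W$, so that $\mu\cdot y_{W,S}$ depends only on the image of $\mu$ in the exterior algebra $\bigwedge^\bullet(\Lambda/I_W)$. Writing $\mu=\lambda_1\wedge\cdots\wedge\lambda_p$ with $\lambda_t\in\Lambda$, I would express each class $\lambda_t\bmod I_W$ as a $\Z$-combination of the classes $\{\chi_b \bmod I_W\}_{b\in\mathcal{B}_W}$ and expand the wedge; using relation \ref{eq_ovvia_Z} once more in the form $\chi_b\cdot y_{W,S}=0$ for $b\in S$ (legitimate because $\chi_b\in\Gamma_S\subseteq I_W$) to kill the terms whose index set meets $S$, what remains is a $\Z$-combination of elements $\bigl(\prod_{b\in S'}\chi_b\bigr)\cdot y_{W,S}$ with $S'\subseteq\mathcal{B}_W$, as wanted.

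The step I expect to be the real obstacle — and the only place where the integral proof genuinely departs from \cref{lemma_generators} — is the passage, in the second paragraph, from the fact that $\mathcal{B}_W$ spans $(\Lambda/I_W)\otimes_\Z\Q$ to the stronger statement that the classes $\chi_b\bmod I_W$ generate the free $\Z$-module $\Lambda/I_W$, equivalently that their $p$-fold exterior products generate $\bigwedge^p(\Lambda/I_W)$ over $\Z$. Over $\Q$ these classes form a basis and the expansion of $\mu$ is automatic; over $\Z$ one must verify that $\mathcal{B}_W$ can be, or has been, chosen so that its reductions actually span $\Lambda/I_W$ integrally, and this is the point on which the argument hinges. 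Granting it, relations \ref{eq_reorder_Z}, \ref{eq_ovvia_Z} and \ref{eq_relazioni_int} close the argument exactly as in the rational case.
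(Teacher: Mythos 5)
Your reduction is the one the paper intends, and up to the step you flag it is correct: straightening with relations \ref{eq_reorder_Z} and \ref{eq_relazioni_int} shows that the $y_{W,S}$ with $S$ a no broken circuit associated to $W$ generate $GOS^{\bullet,\bullet}(M(\Delta);\Z)$ over $\bigwedge^\bullet\Lambda$, and for $S$ associated to $W$ one indeed has $\Rad_\Lambda\Gamma_S=I_W$, so relation \ref{eq_ovvia_Z} reduces everything to $\bigwedge^\bullet(\Lambda/I_W)\cdot y_{W,S}$. This is exactly the paper's route, since its proof of \cref{lemma_generetors_int} consists solely of the remark that the argument of \cref{lemma_generators} carries over.

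However, the step you then ``grant'' is a genuine gap, not a formality, and it cannot be closed in the stated generality: $\mathcal{B}_W$ is only required to project to a $\Q$-basis of $(\Lambda/I_W)\otimes_\Z\Q$, so its classes in general span only a finite-index sublattice of $\Lambda/I_W$. Concretely, for the centred essential arrangement in $(\C^*)^2$ with primitive characters $\chi_1=(1,0)$ and $\chi_2=(1,2)$, the set $\mathcal{B}_T$ is forced to be $\{\chi_1,\chi_2\}$, which spans an index-two sublattice of $\Lambda=\Z^2$; hence in bidegree $(1,0)$, where $GOS^{1,0}\cong\Lambda$, the elements $\chi_1\cdot y_{T,\emptyset}$ and $\chi_2\cdot y_{T,\emptyset}$ listed in \cref{lemma_generetors_int} generate only an index-two subgroup, and no better choice of $\mathcal{B}_T$ inside $\Delta$ exists. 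So your argument actually proves generation after tensoring with $\Q$ (equivalently, that the listed elements span a full-rank submodule), which is the statement the rank comparison in the proof of \cref{main_on_integers} really uses; the literal $\Z$-module generation claim would require the reductions of $\mathcal{B}_W$ to generate $\Lambda/I_W$ integrally, which the definitions do not guarantee. In short, you have correctly isolated the one problematic point, but leaving it as an unproved hypothesis means the proposal does not establish the lemma as stated, and the example above shows the hypothesis genuinely fails for some admissible choices of $\mathcal{B}_W$.
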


The proof of \cref{lemma_generetors_int} is analogous to the proof of \cref{lemma_generators}. 

\begin{teo} \label{main_on_integers}
For every toric arrangement $\Delta$ in normal form, the poset of layers $\S$ and an element $x \in \mathcal{C} / \Aut(G)$ determine the graded algebra $GOS^{\bullet ,\bullet}(M(\Delta);\Z)$.

Moreover, the map $\psi \circ f$ is an isomorphism between $GOS^{\bullet ,\bullet}(M(\Delta);\Z)$ and $\gr H^\bullet(M(\Delta);\Z)$.
\end{teo}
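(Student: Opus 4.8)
The plan is to establish the theorem in two stages, matching its two assertions. The first assertion---that $\S$ together with $x \in \mathcal{C}/\Aut(G)$ determine $GOS^{\bullet,\bullet}(M(\Delta);\Z)$ up to isomorphism of bigraded algebras---is essentially a bookkeeping matter: I would trace through the construction preceding \cref{def_GOS_int}. The poset $\S$ determines the underlying matroid, hence the independent/dependent sets, the circuits associated to each $W$, and the no broken circuits; it determines the multiplicity function (hence $\Delta_U$ and the free lattice $\Gamma'$ with its generating vectors $\{v_e\}$ via \cref{uniq_rat_poset} and the normal form); and by \cref{remark_on_G} it determines the isomorphism class of $G$ and thus, by \Cref{teo_corr_ext}, the set $\mathcal{C}/\Aut(G)$. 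Given $x$, one reconstructs the extension $\Lambda$ of $\Gamma$, whence the free module $V_\Z = \bigwedge^\bullet \Lambda[\{y_{W,S}\}]$, the product $\cdot$, and the ideal $J_\Z$---since relations \ref{eq_reorder_Z}, \ref{eq_ovvia_Z}, \ref{eq_relazioni_int} only reference $\Lambda$, the $\Rad_\Lambda \Gamma_S$, and the circuits, all of which are now available. A subtlety worth spelling out: different choices of representative $x$ in its $\Aut(G)$-orbit give extensions $\Lambda$ differing by an automorphism fixing $\Gamma$, and this automorphism carries $V_\Z$, the product, and $J_\Z$ to the corresponding data for the other choice, so the quotient is well-defined up to canonical isomorphism.

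\emph{For the second assertion} I would show $\psi \circ f$ is an isomorphism. Here $f$ is the (earlier-constructed) isomorphism realizing $A^{\bullet,\bullet}(\Delta) \xrightarrow{\sim} \gr H^\bullet(M(\Delta);\Z)$ from \cref{main_on_cohomology} (I would note the mild renaming: the map called $f$ in \cref{main_on_cohomology} plays this role, not the rational map $f$ of \cref{thm:main_on_rational}). So it suffices to prove $\psi \colon GOS^{\bullet,\bullet}(M(\Delta);\Z) \to A^{\bullet,\bullet}(\Delta)$ is an isomorphism of bigraded algebras. That $\psi$ is a well-defined algebra map is already noted (one checks $J_\Z \subseteq \ker\tilde\psi$, which is routine since $\tilde\psi$ kills each of the three families of relations---$\lambda^*(\omega)$ restricts to zero on $W$ when $\lambda \in \Rad_\Lambda\Gamma_S$, and the Orlik--Solomon relations go to Orlik--Solomon relations under $y_{W,S}\mapsto 1\otimes f_S$). \emph{Surjectivity} follows exactly as in the proof of \cref{thm:main_on_rational}: the summand $H^p(W;\Z)$ of $A^{\bullet,\bullet}$ is generated by products of classes $\chi^*(\omega)$, all of which lie in the image, and the $f_S$ generate the Orlik--Solomon factors.

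\emph{Injectivity is the main obstacle.} The strategy is a rank/freeness comparison combined with the rational case. Since $GOS^{\bullet,\bullet}(M(\Delta);\Z)$ is a torsion-free $\Z$-module (proved just above in the excerpt) and $A^{\bullet,\bullet}(\Delta)$ is torsion-free by minimality of the toric Salvetti complex (the cohomology $H^\bullet(M(\Delta);\Z)$ is torsion-free, cf.\ the references to \cite{dAD2012,Callegaro}), it is enough to show two things: (i) $\psi \otimes \Q$ is an isomorphism, and (ii) $\psi$ is surjective onto a direct summand, or more directly, that $GOS^{\bullet,\bullet}(M(\Delta);\Z)$ and $A^{\bullet,\bullet}(\Delta)$ have the same rank in each bidegree and $\psi$ is surjective---then a surjection of free modules of equal finite rank with torsion-free source is an isomorphism. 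For (i): $\psi \otimes \Q$ is identified with (a relabeling of) the isomorphism $f$ of \cref{thm:main_on_rational}, using the already-observed identification $GOS^{\bullet,\bullet}(M(\Delta);\Z)\otimes_\Z\Q \simeq GOS^{\bullet,\bullet}(M(\Delta);\Q)$; one must check the generating set of \cref{lemma_generetors_int} maps onto the generating set of \cref{lemma_generators} under $\otimes\Q$, which is immediate. For (ii): \cref{lemma_generetors_int} gives a spanning set of $GOS^{\bullet,\bullet}(M(\Delta);\Z)$ of cardinality equal to the number of pairs $(S', (W,S))$ with $S$ a no broken circuit associated to $W$ and $S' \subseteq \mathcal{B}_W$; by the rational case this equals $\rk A^{\bullet,\bullet}(\Delta)$ in each bidegree. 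Since a spanning set of a free module of rank $N$ with exactly $N$ elements is a basis, $GOS^{\bullet,\bullet}(M(\Delta);\Z)$ is free with this basis, and $\psi$ sends it to a spanning set of the free module $A^{\bullet,\bullet}(\Delta)$ of the same rank; hence $\psi$ carries a basis to a basis and is an isomorphism. The delicate point to get right is that the spanning set of \cref{lemma_generetors_int} really does have no $\Z$-linear relations---this is forced by the rational statement together with torsion-freeness, so no extra argument is needed, but one should say so explicitly rather than leave it implicit. Composing, $\psi\circ f^{-1}$ (or $\psi$ followed by $f$ as the theorem states) is the desired isomorphism onto $\gr H^\bullet(M(\Delta);\Z)$.
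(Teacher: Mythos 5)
Your proposal follows the paper's own proof essentially step for step: the first assertion by reconstructing the characters and their incidences from $\S$ and $x$ (via \cref{cor_invariante_centrato}), and the second by showing $\psi$ is surjective and then injective through a rank count combined with torsion-freeness of $GOS^{\bullet,\bullet}(M(\Delta);\Z)$, finally composing with the isomorphism $f$ of \cref{main_on_cohomology}. One caution on the surjectivity step: over $\Z$ it does \emph{not} follow ``exactly as in the proof of \cref{thm:main_on_rational},'' because the classes $\chi^*(\omega)$ with $\chi\in\Delta$ generate only the finite-index sublattice $\Gamma\subseteq\Lambda\simeq H^1(T;\Z)$ (index $7$ in \cref{controesempio}, say), so their products need not generate $H^p(W;\Z)$ integrally; the correct argument, which is the one the paper gives, is that $V_\Z$ contains all of $\bigwedge^\bullet\Lambda$ and $\tilde{\psi}(\lambda)=\lambda^*(\omega)\otimes 1$, so the image contains $A^{1,0}$ and $A^{0,\bullet}$ and hence every $A^{p,q}=(A^{1,0})^p\cdot A^{0,q}$. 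With that correction your argument coincides with the paper's, and the extra details you supply (well-definedness over the $\Aut(G)$-orbit, and the observation that the generating set of \cref{lemma_generetors_int} is actually a basis) are consistent with, and slightly more explicit than, the paper's terser rank-counting.
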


\begin{proof}
The characters $\chi_i \in \Lambda$ for $i=1, \dots, n$ are determined, up to isomorphism, by $\S$ and $x$ (\cref{cor_invariante_centrato}).
The incidence relations among the hypertori of $\Delta$ are codified in $\S$.
Since the construction of $GOS^{\bullet ,\bullet}(M(\Delta);\Z)$ depends only on the characters and the incidence relations between hypertori, the first assertion follows.

We prove the surjectivity of $\psi$ by showing the surjectivity of $\tilde{\psi}$.
The image of $\tilde{\psi}$ contains $A^{1,0}$ and $A^{0,\bullet}$, because $\Lambda \simeq H^1(T;\Z)$ and the elements $f_S$ generate $H^{\rk W}(M(\Delta[W]);\Z)$.
It is obvious that $A^{p,q}= (A^{1,0})^p \cdot A^{0,q}$.

The map $\psi$ is injective since the two $\Z$-modules have the same rank equal to $ \sum_{q=0}^{r} 2^{r-q}|\mathcal{N}_q|$ and $GOS^{\bullet ,\bullet}(M(\Delta);\Z)$ is torsion-free.
Therefore, $\psi$ is an isomorphism.

The composition of $\psi$ with the isomorphism $f$ of \cref{main_on_cohomology} gives the desired isomorphism $GOS^{\bullet ,\bullet}(M(\Delta);\Z) \rightarrow \gr H^\bullet(M(\Delta);\Z)$.
\end{proof}

\section{An example}
\label{sect_example}
In this section we give an example of two toric arrangements with the same poset of intersections $\mathscr{S}$ and different invariant in $\mathcal{C} / \Aut(G)$.
By the theory we have developed so far, these two arrangements will have isomorphic graded cohomology ring $\gr^\bullet H^\bullet(M(\Delta);\Q)$ with rational coefficients and isomorphic cohomology modules with integer coefficients.
Nevertheless, their cohomology rings with integer coefficients are not naturally isomorphic, \ie there is no isomorphism between the cohomology rings that preserve the cohomology of the tori.

\begin{example}
\label{controesempio}
Let $\Delta$ and $\Delta'$ be the centred arrangements in $(\C^*)^2$ given by the three characters in $\Lambda=\Z^2$ and, respectively, in $\Lambda'=\Z^2$:
\begin{align*}
\left( \begin{array}{c|c|c}
\chi_1 & \chi_2 & \chi_3
\end{array} \right)
 = \left( \begin{array}{ccc}
1 & 1 & 2 \\ 
0 & 7 & 7
\end{array} \right) \\
\left( \begin{array}{c|c|c}
\chi_1' & \chi_2' & \chi_3'
\end{array} \right)
 = \left( \begin{array}{ccc}
1 & 2 & 3 \\ 
0 & 7 & 7
\end{array} \right)
\end{align*}
\end{example}
These two arrangements have the same poset of intersections $\S$ and the same arithmetic matroid $(E,\rk, m)$.
The first two characters of $\Delta$ (respectively, of $\Delta'$) form a basis of the arithmetic matroid (or, equivalently, of the vector space $\Lambda \otimes \Q$).
Following the ideas in \cref{sect_On uniqueness of representability}, we write the three characters in this chosen basis; the coordinates of $\chi_1, \chi_2, \chi_3$ (respectively of $\chi'_1, \chi'_2, \chi'_3$) are
\[ D=\left( \begin{array}{ccc}
1 & 0 & 1 \\
0 & 1 & 1
\end{array} \right)\]
in both cases. Hence the matrix $A= \left( \begin{smallmatrix}
1 \\ 1
\end{smallmatrix} \right)$ is in normal form with respect the unique maximal tree in the associated graph.
Let $\Gamma$ be the lattice $\Z^2$, $U$ the two dimensional torus and $\Delta_U$ the arrangement in $U$ described by the matrix $D$.

The inclusion $\Gamma \hookrightarrow \Lambda$ related to the arrangement $\Delta$, defined by
\begin{align*}
 e_1 & \longmapsto \chi_1 \\
 e_2 & \longmapsto \chi_2 \\
 e_1+e_2 & \longmapsto \chi_3 
\end{align*}
has cokernel $\Lambda / \Gamma \simeq \Z_7$\footnote{We denote with $\Z_n$ the cyclic group on $n$ elements.} and the same holds for the inclusion $\Gamma \hookrightarrow \Lambda'$ given by $\Delta'$.
We want to find an element in $ \Ext^1(\Z_7,\Gamma ) / \Z_7^*$ that represents the extension $\Gamma \hookrightarrow \Lambda$.
The group $\Ext^1(\Z_7, \Gamma )$ is isomorphic to $\Z_7 \times \Z_7$ and the elements of $\Ext^1(\Z_7 , \Gamma)$ coherent with $\Delta_U$ and $m$ are
\[ \mathcal{C}=\{ (a,b) \in \Z_7 \times \Z_7 \mid a,b,a+b \neq 0 \textnormal{ in } \Z_7\} \]
hence the quotient by the natural action of $\Z_7^*$ is
\[ \faktor{\mathcal{C}}{\Z_7^*} \simeq \{ x \in \Z_7 \mid x,x+1 \neq 0 \textnormal{ in } \Z_7\}, \]
where each element $x$ corresponds to the $\Z_7$ orbit of $(x,1)$ in $\mathcal{C}$.
The coherent element associated to $\Delta$ is $x=1$ and the one associated to $\Delta'$ is $x'=2$.

The two algebras $GOS^{\bullet,\bullet}(M(\Delta);\Q)$ and $GOS^{\bullet,\bullet}(M(\Delta');\Q)$ are isomorphic because, as the construction in \cref{sect_OS_algebra}, they depend only on the poset $\S$ (or on $(E,\rk,m)$ for centred toric arrangements).

\begin{prop}
There is no isomorphism of algebras $f$ between the cohomologies with integer coefficients of $M(\Delta)$ and $M(\Delta')$ of \cref{controesempio} such that the following diagram commutes
\begin{center}
\begin{tikzpicture}[scale=1.8]
\node (xa1) at (1,1) {$H^\bullet(M(\Delta);\Z)$};
\node (x1) at (-1,0.5) {$H^\bullet(T;\Z)$};
\node (x2) at (1,0) {$H^\bullet(M(\Delta');\Z)$};
\draw[right hook->] (x1) to (xa1);;
\draw[right hook->] (x1) to (x2);
\draw[->,dashed] (xa1) to node[left]{$f$} (x2);
\end{tikzpicture}
\end{center}
\end{prop}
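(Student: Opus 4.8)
The plan is to argue by contradiction, reducing everything to the canonical bigrading. Suppose such an isomorphism $f$ exists. Being a morphism of graded algebras compatible with the inclusion of $H^\bullet(T;\Z)$, the map $f$ is $H^\bullet(T;\Z)$-linear and degree-preserving, hence it sends $H^{\le i}(M(\Delta);\Z)$ onto $H^{\le i}(M(\Delta');\Z)$ and therefore preserves the filtration $F_\bullet$. It thus descends to an isomorphism of bigraded algebras $\gr H^\bullet(M(\Delta);\Z)\to\gr H^\bullet(M(\Delta');\Z)$, which by \Cref{main_on_cohomology} is an isomorphism $\bar f\colon A^{\bullet,\bullet}(\Delta)\to A^{\bullet,\bullet}(\Delta')$ restricting to the identity on $A^{\bullet,0}=H^\bullet(T;\Z)=\bigwedge^\bullet\Z^2$ (the torus is the same for both arrangements).

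Next I would read off the relevant local structure. All three characters $\chi_1,\chi_2,\chi_3$ of $\Delta$, and $\chi_1',\chi_2',\chi_3'$ of $\Delta'$, are primitive and pairwise non-proportional, so each hypertorus $H_i=\mathcal{V}_T(1-\chi_i)$ is a connected rank-one layer, the restriction $H^1(T;\Z)=\Lambda\twoheadrightarrow H^1(H_i;\Z)\cong\Lambda/\Z\chi_i$ is the quotient map, and $\Delta[H_i]$ consists of a single hyperplane, so $H^1(M(\Delta[H_i]);\Z)\cong\Z$. Hence $A^{0,1}(\Delta)=\bigoplus_{i=1}^3\Z\,y_i$ with $y_i$ the generator supported on $H_i$, $A^{1,1}(\Delta)\cong\bigoplus_{i=1}^3 \Lambda/\Z\chi_i$ (one direct summand per rank-one layer), and by \Cref{prod} multiplication by $\lambda\in A^{1,0}=\Lambda$ carries $y_i$ to the class of $\lambda$ in the $i$-th summand $\Lambda/\Z\chi_i$. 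Since $\rk\Lambda=2$, each $\Lambda/\Z\chi_i$ is torsion free, so $\ann_\Lambda(y_i)=\Z\chi_i$.

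Now I would exploit $\bar f$. Writing $\bar f(y_i)=\sum_j c_{ij}\,y_j'$ and using $\Lambda$-linearity, $\ann_\Lambda(y_i)=\ann_\Lambda(\bar f(y_i))$. As the summands of $A^{1,1}(\Delta')$ indexed by the distinct layers $H_1',H_2',H_3'$ are genuine direct summands and each $\Lambda/\Z\chi_j'$ is torsion free, $\ann_\Lambda(\bar f(y_i))=\bigcap_{j\,:\,c_{ij}\neq 0}\Z\chi_j'$. Two distinct primitive rank-one sublattices of $\Z^2$ meet only in $0$, so $\Z\chi_i=\bigcap_{j:c_{ij}\neq 0}\Z\chi_j'$ forces exactly one $c_{ij}$ to be nonzero, say at $j=\sigma(i)$, with $\Z\chi_i=\Z\chi_{\sigma(i)}'$; and since $\bar f$ restricts to an isomorphism of $A^{0,1}(\Delta)\cong\Z^3$ onto $A^{0,1}(\Delta')\cong\Z^3$, the matrix $(c_{ij})$ is monomial and invertible over $\Z$, so $\sigma\in\mathfrak{S}_3$. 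This gives $\{\Z\chi_1,\Z\chi_2,\Z\chi_3\}=\{\Z\chi_1',\Z\chi_2',\Z\chi_3'\}$ as sets of sublattices of $\Z^2$, which is false: $\Z\chi_2=\Z(1,7)$ is none of $\Z(1,0),\Z(2,7),\Z(3,7)$. This contradiction proves the proposition. (Equivalently, such a $\bar f$ would force the coherent elements of $\Delta$ and $\Delta'$ to agree, contradicting the values $x=1\neq 2=x'$ computed for \Cref{controesempio} in view of \Cref{main_on_integers}.)

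The main obstacle is the middle step: nailing down the $\Lambda$-module structure of $A^{0,1}$ and $A^{1,1}$ and verifying that the annihilators behave exactly as claimed. This requires using \Cref{prod} together with the elementary topology of the local arrangements $\Delta[H_i]$ (complements of single hyperplanes) and the surjectivity of $H^1(T;\Z)\to H^1(H_i;\Z)$ with kernel $\Z\chi_i$; once this is in place, the reduction to $\gr$ and the final combinatorial comparison of the lines $\Z\chi_i$ are routine.
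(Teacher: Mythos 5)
Your argument is correct, and its skeleton matches the paper's: pass to the associated bigraded algebra via the filtration (you use $A^{\bullet,\bullet}$ through \cref{main_on_cohomology}, the paper uses the isomorphic $GOS^{\bullet,\bullet}(\,\cdot\,;\Z)$ through \cref{main_on_integers}), and then extract the lines $\Z\chi_i$ from the multiplication $A^{1,0}\otimes A^{0,1}\to A^{1,1}$, using that $\Lambda/\Z\chi_i$ is torsion free because the characters are primitive. Where you diverge is the endgame. You exploit the strict commutativity of the triangle to conclude that the induced map is the \emph{identity} on $A^{1,0}=\Lambda$, and then the annihilator computation $\ann_\Lambda(y_i)=\Z\chi_i$ forces $\bar f(y_i)$ to be supported on a single summand with $\Z\chi_i=\Z\chi'_{\sigma(i)}$ as literal sublattices of $\Z^2$; the contradiction is then immediate because $\Z(1,7)$ is not among $\Z(1,0),\Z(2,7),\Z(3,7)$. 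The paper instead only records that the union $\bigcup_i\Z\chi_i$ is intrinsically characterized (as the non-injectivity locus of multiplication $GOS^{0,1}\to GOS^{1,1}$) and therefore must be carried to $\bigcup_i\Z\chi'_i$ by \emph{some} automorphism of $\Lambda$; ruling that out requires the extra observation that $-\chi_1+\chi_2=7(0,1)\in 7\Lambda$ while no two primitive generators of distinct lines $\Z\chi'_i$ sum into $7\Lambda$. So your proof is shorter and suffices for the proposition as literally stated (identity on $H^\bullet(T;\Z)$), whereas the paper's mod-$7$ step buys the stronger conclusion that no isomorphism exists even up to an automorphism of the torus, which is the form of the statement that genuinely certifies that the coherent element, and not just the poset, is needed. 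Two minor remarks: like the paper, you tacitly assume $f$ is degree-preserving so that it is filtered (this is clearly the intended reading); and your closing parenthetical about the coherent elements $x=1\neq 2=x'$ is not needed and, as phrased, would require more care, since coherent elements classify arrangements only up to torus automorphism.
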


\begin{proof}
Suppose that such an isomorphism exists, then it induces a graded isomorphism between the algebras $\gr H^\bullet(M(\Delta);\Z)$ and $\gr H^\bullet(M(\Delta);\Z)$.
By \cref{main_on_integers} the algebras $GOS^{\bullet,\bullet}(M(\Delta);\Z)$ and $GOS^{\bullet,\bullet}(M(\Delta');\Z)$ are isomorphic as bigraded algebras.
We want to study the behavior of the product of two homogeneous elements of degree $(1,0)$ and $(0,1)$ respectively.
Consider $\lambda \in \Lambda$ and $\sum_{i=1}^3 a_i y_{H_i,i}$ with $a_i \in \Z$; then
\[ \lambda \cdot \sum_{i=1}^3 a_i y_{H_i,i} =0 \Longleftrightarrow \forall\, i=1,2,3 \quad a_i=0 \textnormal{ or } \lambda \in \Z \chi_i \]
Therefore, the set
\[ \{\lambda \in \Lambda \mid GOS^{0,1}(M(\Delta);\Z) \xrightarrow{\cdot \lambda} GOS^{1,1}(M(\Delta);\Z) \textnormal{ is not injective}\}\]
coincides with the union of the three subgroups $\Z \chi_i$, for $i=1, \dots, 3$.
Analogously, we can define intrinsically the three subgroups $\Z \chi'_i$, for $i=1, \dots, 3$, in $\Lambda'$.
We claim that there is no isomorphism $\Lambda \xrightarrow{\sim} \Lambda'$ that sends each subgroup $\Z \chi_i$ into the subgroup $\Z \chi'_{\sigma(i)}$, for some $\sigma \in \mathfrak{S}_3$.

In $GOS^{\bullet,\bullet}(M(\Delta);\Z)$ there exist two elements that generate two different subgroups $\Z \chi_i$ whose sum is in the sublattice $7H^1(T;\Z)$ (namely $\left( \begin{smallmatrix}
-1 \\ 
0
\end{smallmatrix} \right) + \left( \begin{smallmatrix}
1 \\ 
7
\end{smallmatrix} \right)= 7\left( \begin{smallmatrix}
0 \\ 
1
\end{smallmatrix} \right)$), while this property does not hold for $\Lambda'$ and the three subgroups  $\Z \chi_i'$.
We have thus proven that the cohomology rings $H^\bullet(M(\Delta);\Z)$ and $H^\bullet(M(\Delta');\Z)$ are not naturally isomorphic.
\end{proof}

\section{Discriminantal toric arrangements}
\label{sect_discr}

We want to study the continuous deformations of a toric arrangement.
Since the characters group $\Lambda$ of a toric arrangement is a discrete set, no deformation can change the set of characters in $\Delta$, however it is possible that some hypertorus in the arrangement are translated.

A particular nice type of deformation is the \textit{poset isotopy}, which is by definition a deformation  that does not change the poset of layers.
Two toric arrangements are said to be \textit{poset isotopy equivalent} if there exists a poset isotopy that deforms one into the other.
This notion has already been defined in the context of hyperplane arrangements, see \cite[Definition 5.27]{orlik-terao}.

There are two natural questions: 
\begin{enumerate}
\item Are two poset isotopy equivalent toric arrangements diffeomorphic? Are they homeomorphic? Are they homotopy equivalent?
\item Are two toric arrangements with isomorphic poset of layers and same characters poset isotopy equivalent?
\end{enumerate}
In this section we will give a negative answer to the second question and a positive one to the first question.

The following example was suggested by Filippo Callegaro and it is a counterexample to the second question.

\begin{example} \label{es_homotopy}
Let $\Delta$ and $\Delta'$ be the following two toric arrangements in $T^2=\hom (\Z^2, \C^*)$:
\begin{align*}
\Delta &= \left\{(1,\left( \begin{matrix}
1 \\ 0
\end{matrix} \right)), (1,\left( \begin{matrix}
1 \\ 7
\end{matrix} \right)), (1,\left( \begin{matrix}
0 \\ 1
\end{matrix} \right)), (\zeta,\left( \begin{matrix}
0 \\ 1
\end{matrix} \right))
\right\} \\
\Delta' &= \left\{(1,\left( \begin{matrix}
1 \\ 0
\end{matrix} \right)), (1,\left( \begin{matrix}
1 \\ 7
\end{matrix} \right)), (1,\left( \begin{matrix}
0 \\ 1
\end{matrix} \right)), (\zeta^2,\left( \begin{matrix}
0 \\ 1
\end{matrix} \right))
\right\}
\end{align*}
where $\zeta$ is a primitive $7^{th}$-root of unity.

\begin{figure}
    \centering
    \begin{subfigure}[b]{0.48\textwidth}
		\begin{tikzpicture}[scale=0.8]
			\draw[gray] (0, 0) rectangle (7,7);
			\draw [line width=1pt] (0,3.5) -- (7,3.5);
			\foreach \x in {1,2,...,7}{
				\draw [line width=1pt] (\x-1,0) -- (\x,7);
				\draw (\x-0.5,3.5) node [circle, draw, fill=black, inner sep=0pt, minimum width=4pt] {};
			}
			\draw [line width=1pt](0.5,0) -- (0.5,7);
			\draw [color=red, line width=1pt](1.5,0) -- (1.5,7);
		\end{tikzpicture}
		\caption{$M(\Delta)$}
	\end{subfigure}
    \begin{subfigure}[b]{0.48\textwidth}
		\begin{tikzpicture}[scale=0.8]
			\draw[gray] (0, 0) rectangle (7,7);
			\draw [line width=1pt] (0,3.5) -- (7,3.5);
			\foreach \x in {1,2,...,7}{
				\draw [line width=1pt] (\x-1,0) -- (\x,7);
				\draw (\x-0.5,3.5) node [circle, draw, fill=black, inner sep=0pt, minimum width=4pt] {};
			}
			\draw [line width=1pt](0.5,0) -- (0.5,7);
			\draw [color=red, line width=1pt](2.5,0) -- (2.5,7);
		\end{tikzpicture}
		\caption{$M(\Delta')$}
	\end{subfigure}
	\caption{Representations of the two arrangements on the compact subtorus. The red subtori are the hypertori $H_4$ and $H'_4$.}
\end{figure}
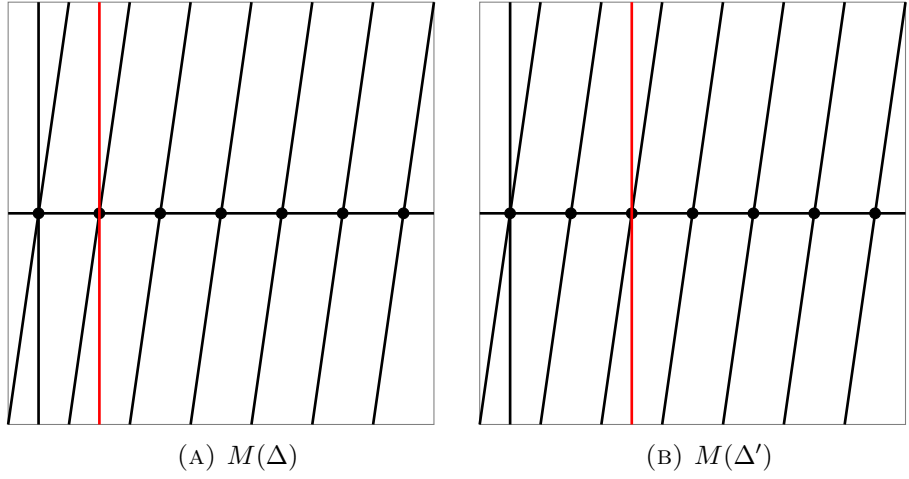

These arrangements have the same poset of intersections $\mathscr{S}$ and the same characters $\chi_i$ for $i=1, \dots, 4$, therefore by \cref{main_on_integers} their bigraded cohomology groups are isomorphic.
\end{example}

\begin{de}
A layer $W \in \mathscr{S}$ is \textit{generic} if there exist exactly $\rk W$ hypertori containing $W$.

A poset is \textit{generic} if all layers in $\mathscr{S}$ are generic.

A poset $\mathscr{S}$ is \textit{nearly generic} if there exists a layer $\overline{W} \in \mathscr{S}$ such that all layers $W$ not containing $\overline{W}$ are generic.
\end{de}
Clearly, generic arrangements are nearly generic.
We fix $n$ characters $\chi_i$ in $\Lambda$ and study the poset of layers of $\Delta=\{(a_i, \chi_i)\}_{i=1,\dots, n}$, where $a_i$ ranges in $\C^*$.
From now on, we suppose all characters $\chi_i$, $i=1,\dots, n$, primitive and we consider the natural surjective map from $E=\{1,\dots, n\}$ onto $\mathscr{S}_1(\{(a_i, \chi_i)\}_{i=1,\dots, n})$, the set of atoms.
We will understand that all isomorphisms between these posets will be over $E$.

We adopt the convention that a list of indexes $(j_1, \cdots, j_k)$ is denoted by an underlined letter $\underline{j}$.

\begin{teo} \label{deform}
The subset of $(\C^*)^n$ given by
\[ L(\mathscr{S})= \{\underline{a}= (a_i)_i \in (\C^*)^n \mid \mathscr{S}(\{(a_i, \chi_i)\}_{i=1,\dots, n}) \simeq \mathscr{S} \} \]
is a smooth locally closed subset of $(\C^*)^n$.
Moreover, if $\mathscr{S}$ is nearly generic then $L(\mathscr{S})$ is connected or empty.
\end{teo}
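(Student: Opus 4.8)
The plan is to describe $L(\mathscr{S})$ as the locus where a finite collection of polynomial (Laurent) conditions are satisfied with equality or inequality, then argue smoothness and connectedness from this description.

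First I would set up the basic combinatorial framework. For each subset $S\subseteq E$ with $\{\chi_i\}_{i\in S}$ independent, the intersection $\bigcap_{i\in S}H_i$ (where $H_i=\mathcal{V}_T(1-a_i\chi_i)$) is nonempty precisely when a certain compatibility condition on the $a_i$, $i\in S$, holds; when nonempty it is a union of $m(S)$ translated subtori of codimension $|S|$, where $m(S)=[\,\Lambda:\Rad_\Lambda\langle\chi_i\rangle_{i\in S}\,]$ depends only on the characters. The poset $\mathscr{S}(\underline a)$ is determined by which of these intersections are nonempty and how their connected components are contained in one another, i.e.\ by which circuits $C$ (minimal dependent subsets of $E$) are ``satisfied'' by $\underline a$ in the sense that $\bigcap_{i\in C}H_i\neq\emptyset$. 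Concretely, for a circuit $C$ there is a unique (up to sign/scaling) primitive relation $\sum_{i\in C}\nu_i\chi_i=0$, and the intersection condition reads $\prod_{i\in C}a_i^{\nu_i}=\zeta$ for $\zeta$ in an explicit finite set of roots of unity (the set has $\gcd$-many elements). Thus $L(\mathscr{S})$ is cut out inside $(\mathbb{C}^*)^n$ by imposing, for each circuit, that the monomial $\prod a_i^{\nu_i}$ either lies in a prescribed nonempty subset of that root-of-unity set or avoids it entirely, together with finitely many further equalities/inequalities among the $a_i$ encoding finer incidences. This exhibits $L(\mathscr{S})$ as a finite union of locally closed pieces; to see it is itself locally closed (and smooth) one checks that the ``equality'' constraints it imposes are transverse — they are of the form $\prod a_i^{\nu_i}=$ const for linearly independent exponent vectors, hence define a smooth subgroup-coset, and the ``inequality'' constraints are open.

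For connectedness under the nearly generic hypothesis, let $\overline W$ be the distinguished layer. The key point is that generic layers impose only \emph{open} conditions (a generic layer is automatically cut out correctly as soon as no spurious coincidences occur), so the only \emph{closed} conditions defining $L(\mathscr{S})$ come from circuits $C$ with $\overline W\leq\bigcap_{i\in C}H_i$, i.e.\ circuits ``through'' $\overline W$. I would argue that all such circuits share a common structure forcing their defining equations $\prod_{i\in C}a_i^{\nu_i}=\zeta_C$ to be simultaneously solvable along a single connected coset: pick the indices $I\subseteq E$ of hypertori through $\overline W$; the closed locus is the coset of a subtorus $\{\underline a : \prod a_i^{\nu}=\text{prescribed for the relations supported on }I\}\subseteq(\mathbb{C}^*)^n$, which is connected, and on it the remaining (open, generic) conditions carve out a connected open dense subset because their complement is a proper closed subvariety of a connected variety and $L(\mathscr{S})$ is a single such piece rather than a disjoint union once we've fixed the poset. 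So $L(\mathscr{S})$ is either empty or an open dense subset of a connected coset of a subtorus, hence connected.

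\textbf{The main obstacle} I anticipate is the connectedness claim: specifically, verifying that in the nearly generic case $L(\mathscr{S})$ really is a \emph{single} stratum — that the open generic conditions do not disconnect the closed coset into several components, and that no two distinct sign/root-of-unity choices for the circuits through $\overline W$ can yield the \emph{same} poset $\mathscr{S}$ on different components. For hyperplane arrangements the analogue is that the complement of a subspace arrangement of complex codimension $\geq 1$ in a connected variety stays connected; here the subtlety is that the removed locus might be a \emph{union of cosets} (coming from the multi-valuedness $m(C)>1$), so one must check these do not separate the ambient coset — which holds because each has complex codimension $\geq 1$, but one must confirm the ambient piece is irreducible (equivalently, that the relations through $\overline W$ define an irreducible, i.e.\ primitive-saturated, subtorus coset, which follows from working with primitive characters). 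Handling the bookkeeping of which poset isomorphism-over-$E$ each choice induces, and ruling out that two components realize isomorphic posets, is the delicate part; everything else (smoothness, local closedness) is a routine transversality computation.
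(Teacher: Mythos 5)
Your overall plan is the same as the paper's (stratify $(\C^*)^n$ by the nonemptiness conditions for intersections of the translated hypertori, observe that the closed conditions cut out group subvarieties and the remaining conditions are open, and in the nearly generic case reduce to one irreducible closed stratum minus proper closed subsets), but two of your concrete steps are off, and the decisive one is exactly the point the paper has to work for. First, the nonemptiness condition for a circuit $C$ is the single equation $\prod_{i\in C}a_i^{\nu_i}=1$, where $(\nu_i)$ is the primitive generator of the relation lattice $\ker(\Z^{C}\to\Lambda)$: this kernel is automatically saturated because $\Lambda$ is torsion free, and a homomorphism defined on $\langle\chi_i\rangle_{i\in C}$ extends to $\Lambda$ since $\C^*$ is divisible. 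There is no finite set of roots of unity to choose among; $m(C)>1$ only counts connected components of the intersection inside $T$, not admissible values of $\underline{a}$. Second, and more seriously, the closed condition attached to the non-generic layer $\overline{W}$ is not the conjunction of the circuit equations for circuits through $\overline{W}$, but the nonemptiness of $\bigcap_{i\in\underline{j}(\overline{W})}H_i$, i.e.\ that $\chi_i\mapsto a_i^{-1}$ kills the \emph{whole} relation lattice of $\{\chi_i\}_{i\in\underline{j}(\overline{W})}$; circuit vectors generate that lattice only up to finite index in general, so the intersection of your circuit subtori may be strictly larger and disconnected.

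This is where your proof of the ``nearly generic $\Rightarrow$ connected'' half has a genuine gap: connectedness of the closed stratum is the real content, and the justification you offer (``linearly independent exponent vectors'', ``follows from working with primitive characters'') is not valid. Primitivity of each individual relation does not make an intersection of subtori connected: in $(\C^*)^2$ the subtori $\{xy=1\}$ and $\{xy^{-1}=1\}$ are each cut out by a primitive character, yet meet in two points. What actually works, and what the paper proves as a separate lemma, is that the locus $B_{\underline{j}}=\{\underline{a}\mid\bigcap_{i\in\underline{j}}H_i(\underline{a})\neq\emptyset\}$ is a single connected subtorus of $(\C^*)^n$: either because it is cut out by a saturated lattice of characters of $(\C^*)^n$, or, as in the paper, because it is the image of the irreducible torus $\{(\underline{a},\underline{z})\mid a_i\chi_i(z)=1,\ i\in\underline{j}\}$ and the elimination ideal of a binomial ideal is again binomial. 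Once this is in place, the nearly generic case gives $L(\mathscr{S})=B_{\underline{j}(\overline{W})}\setminus\bigcup B_{\underline{j}}$, an open subset of an irreducible torus, hence connected (also in the euclidean topology), and the worry you raise about several components realizing the same poset evaporates. A minor further point: smoothness does not need transversality or independence of the exponent vectors (which can indeed fail when several circuits pass through $\overline{W}$); it is enough that the closed locus is a subgroup of $(\C^*)^n$, hence diagonalizable and smooth, with $L(\mathscr{S})$ open in it.
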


For each $\underline{a} \in (\C^*)^n$ define the hypertori $H_i=H_i(\underline{a})= \mathcal{V}_T (1-a_i \chi_i)\subset T$ for $i=1,\dots , n$.

\begin{lemma} \label{lemma_tori_discr}
For all sublists $\underline{j}$ of $(1, \dots, n)$, the subset 
\[B_{\underline{j}}= \{ \underline{a}\in (\C^*)^n \mid H_{j_1}(\underline{a}) \cap \cdots \cap H_{j_k}(\underline{a})\neq \emptyset\}\]
is a connected torus in $(\C^*)^n$.
Moreover, the intersection of $H_{j_1}, \cdots, H_{j_k}$ in $T$ is independent, up to translation, of the point $\underline{a} \in B_{\underline{j}}$.
\end{lemma}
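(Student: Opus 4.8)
The plan is to describe $B_{\underline{j}}$ explicitly as the solution set of a system of multiplicative equations in the $a_i$, and to recognize that set as (a coset of) a subtorus. First I would write each hypertorus as $H_{j}(\underline{a})=\{t\in T\mid \chi_j(t)=a_j^{-1}\}$, so that a point $t$ lies in $H_{j_1}(\underline{a})\cap\cdots\cap H_{j_k}(\underline{a})$ exactly when $\chi_{j_\ell}(t)=a_{j_\ell}^{-1}$ for all $\ell$. The existence of such a $t$ is a solvability condition on the tuple $(a_{j_1},\dots,a_{j_k})$: the homomorphism $T\to(\C^*)^k$, $t\mapsto(\chi_{j_1}(t),\dots,\chi_{j_k}(t))$, has image equal to the subtorus $\{(u_1,\dots,u_k)\mid \prod u_\ell^{n_\ell}=1 \text{ whenever }\sum n_\ell\chi_{j_\ell}=0 \text{ in }\Lambda\}$, since $T$ is an algebraic torus and the image of a torus under a homomorphism is the subtorus cut out by the relations among the characters. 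Hence $\underline{a}\in B_{\underline{j}}$ if and only if for every integer relation $\sum_\ell n_\ell\chi_{j_\ell}=0$ one has $\prod_\ell a_{j_\ell}^{\,n_\ell}=1$, with the $a_i$ for $i\notin\{j_1,\dots,j_k\}$ free. This is a finite system of Laurent-monomial equations in the coordinates of $(\C^*)^n$, so $B_{\underline{j}}$ is an algebraic subgroup of $(\C^*)^n$.

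Next I would check that $B_{\underline{j}}$ is in fact connected, i.e.\ a subtorus rather than a subgroup with nontrivial component group. The subgroup of $\Z^k$ of relations among $\chi_{j_1},\dots,\chi_{j_k}$ is a saturated sublattice precisely when the $\chi_{j_\ell}$ are primitive; more to the point, the equations defining $B_{\underline{j}}$ in the free coordinates $a_{j_1},\dots,a_{j_k}$ are given by a matrix of relations whose row span is saturated in $\Z^k$ (the relation lattice of any finite list of vectors in a free $\Z$-module is always saturated, being the kernel of $\Z^k\to\Lambda$). Consequently the defining subgroup of $(\C^*)^k$ is connected, and since the remaining $n-k$ coordinates are unconstrained, $B_{\underline{j}}\cong(\text{connected subtorus of }(\C^*)^k)\times(\C^*)^{n-k}$ is a connected torus in $(\C^*)^n$.

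For the final assertion, suppose $\underline{a},\underline{a}'\in B_{\underline{j}}$. The intersection $\bigcap_\ell H_{j_\ell}(\underline{a})$, when nonempty, is a coset of the subtorus $T_0=\bigcap_\ell\ker\chi_{j_\ell}\subseteq T$, which does not depend on $\underline{a}$; choosing base points $t\in\bigcap_\ell H_{j_\ell}(\underline{a})$ and $t'\in\bigcap_\ell H_{j_\ell}(\underline{a}')$, the two intersections are $tT_0$ and $t'T_0$, hence translates of one another by $t't^{-1}$. So up to translation in $T$ the intersection is independent of $\underline{a}\in B_{\underline{j}}$, as claimed.

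The main obstacle I anticipate is purely bookkeeping: being careful about which coordinates are constrained and which are free when the list $\underline{j}$ omits indices, and making sure the ``subtorus = relations among characters'' description is applied to the right list (the $\chi_{j_\ell}$, not all the $\chi_i$). The connectedness point deserves an explicit sentence about saturation of the relation lattice, but it is not genuinely difficult. No delicate estimate or deep input is needed beyond the standard structure theory of diagonalizable algebraic groups.
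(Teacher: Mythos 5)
Your proof is correct, but it takes a different route from the paper's. The paper works with the incidence torus $Y\subset(\C^*)^{n+r}$ cut out by the binomial ideal $(1-a_i\chi_i)_{i\le k}$, projects it to $(\C^*)^n$, and identifies the image $B_{\underline{j}}$ with the zero set of the elimination ideal, invoking the fact that elimination ideals of binomial ideals are again binomial (Eisenbud--Sturmfels) to see that $B_{\underline{j}}$ is a torus, with connectedness coming for free as the image of the connected torus $Y$; you instead read off $B_{\underline{j}}$ directly as the preimage, in the $a$-coordinates, of the image of the character map $T\to(\C^*)^k$, $t\mapsto(\chi_{j_1}(t),\dots,\chi_{j_k}(t))$, and use the standard duality for diagonalizable groups (divisibility of $\C^*$, i.e.\ extension of homomorphisms from $\Gamma_{\underline j}$ to $\Lambda$) to describe that image as the subgroup cut out by the relation lattice, whose saturation (it is the kernel of $\Z^k\to\Lambda$ with $\Lambda$ free) gives connectedness. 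Your argument is more elementary in that it avoids the Gr\"obner-basis/elimination input and makes the product structure $B_{\underline{j}}\cong(\text{subtorus of }(\C^*)^k)\times(\C^*)^{n-k}$ explicit, while the paper's incidence-variety formulation has the advantage of simultaneously setting up the fibration whose fibers are the intersections $\bigcap_\ell H_{j_\ell}(\underline a)$, which it then uses for the translation statement; your treatment of that last point (nonempty solution sets are cosets of the fixed subgroup $\bigcap_\ell\ker\chi_{j_\ell}$, hence translates of one another) is the same idea in different clothing. One terminological nitpick: $\bigcap_\ell\ker\chi_{j_\ell}$ need not be connected, so calling it a subtorus is loose (the paper itself notes the fibers can have several components, the number being a determinant divisor), but this does not affect the translation argument, which only needs it to be a fixed subgroup independent of $\underline a$.
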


\begin{proof}
Without loss of generality we suppose $j_i=i$, for $i=1, \dots, k$ and study the subtorus $Y$ of 
\[(\C^*)^{n+r}=\spec \C[ a_1^{\pm 1}, \cdots, a_n^{\pm 1}, z_1^{\pm 1}, \cdots, z_r^{\pm 1}]\] given by the equations $I= (1-a_i\chi_i)_{i=1, \dots, k}$.
The rings morphism
\[\C[ a_1^{\pm 1}, \cdots, a_n^{\pm 1}] \longrightarrow \faktor{ \C[ a_1^{\pm 1}, \cdots, a_n^{\pm 1}, z_1^{\pm 1}, \cdots, z_r^{\pm 1}]}{I}\]
induces a projection between the associated tori:
\[\begin{matrix} p \colon & Y & \longrightarrow & (\C^*)^n \\ & (\underline{a}, \underline{z}) & \longmapsto & \underline{a} & \end{matrix}\]
The image of the map  $p$ is the closed subset $B$ described by the contracted ideal 
\[I^c = (1-a_i\chi_i)_{i=1,\dots, k} \cap \C[a_1^{\pm 1}, \cdots, a_n^{\pm 1}]\]
Given that the intersection of $H_1(\underline{a}), \cdots, H_k (\underline{a})$ is nonempty if and only if $\underline{a}$ is in the image of $p$, $B$ coincides with $B_{\underline{j}}$.
The elimination ideal of a binomial ideal is still binomial: this is a standard fact about binomial ideals and Gr\"obner bases (for a proof see \cite[Corollary 1.3]{eisenbud1996}).
In our case, since $I$ is a binomial ideal, $I^c$ is binomial and the closed subset $B$ is a torus.
Moreover, $B$ is connected since it is the image of the connected torus $Y$ under the map $p$.

The fibers of the map $p$ are either empty or a torus of codimension $\rk \left( \begin{array}{ccc}
\chi_1 & \cdots & \chi_k
\end{array} \right)$
and with $m$ connected components, where $m$ is the $k^{th}$-deter\-mi\-nant divisor of the matrix $\left( \begin{array}{ccc}
\chi_1 & \cdots & \chi_k
\end{array} \right)$.
The fiber of a point in $B$, seen as torus in $T$, is obtained from  any other nonempty fiber by translations.
\end{proof}

Recall that $\chi_i$, $i=1,\dots,n$, are characters of a $r$-dimensional torus.

\begin{remark}
Let $\underline{j}$ be a sublist of $(1,\dots, n)$ of cardinality $k$.
The torus $Y$ is of dimension $n+r-k$, so the set $B_{\underline{j}}$ is of dimension $n+r-k-\rk \underline{j}$.
In particular, $B_{\underline{j}}$ is a hypertorus in $(\C^*)^n$ if and only if $\underline{j}$ is a circuit (\ie $\rk \underline{j} = r-k+1$).
\end{remark}

\begin{de}\label{def_discriminantal_TA}
The centred toric arrangement $D(\chi_1, \chi_2, \dots, \chi_n)$ given by the sets $B_{\underline{j}}$,for all circuits $\underline{j}$, in $T'=(\C^*)^n$ is called \textit{discriminantal toric arrangement} associated to the $n$ characters $\chi_i$, $i=1,\dots , n$.
\end{de}

\begin{proof}[Proof of \Cref{deform}]
If $\S$ is not $\mathscr{S}(\{(b_i, \chi_i)\}_{i=1,\dots, n})$ for some $b_i$, there is nothing to prove.
Otherwise, for each layer $W$ of $\S$, let $\underline{j}(W)$ be the ordered set:
\[\underline{j}(W) = \{ i \mid A_i \leq W \}\]
where $A_i$ is the atom of $\S$ associated to $i$.

The condition $\mathscr{S}(\{(a_i, \chi_i)\}_{i=1,\dots, n})= \mathscr{S}$ is equivalent to 
\[
\forall \, \underline{a} \in (\C^*)^n \, \exists \, W \in \S 
\, \left( \underline{a} \in B_{\underline{j}} \Leftrightarrow \underline{j}\subseteq \underline{j}(W) \right)
\]
By \Cref{lemma_tori_discr}, the set 
\[L(\mathscr{S})= \bigcap_{ W\in \mathscr{S_r}} B_{\underline{j}(W)} \setminus \bigcup_{\substack{ \underline{j} \not \subseteq \underline{j}(W) \\ \forall W\in \mathscr{S_r}}} B_{\underline{j}}\]
is locally closed in $(\C^*)^n$ and open in the torus $\bigcap_{ W\in \mathscr{S_r}} B_{\underline{j}(W)}$, hence it is also smooth.

If $W$ is a generic layer, then we have that $B_{\underline{j}}=(\C^*)^n$ for all sets $\underline{j}$ included in $\underline{j}(W)$.
Let $\mathscr{S}$ be a nearly generic poset and $\overline{W}$ non-generic maximal layer; then the equality $L(\mathscr{S})= B_{\underline{j}(\overline{W})} \setminus \bigcup_{\text{some } \underline{j}} B_{\underline{j}} $ holds.
If $L(\mathscr{S})$ is nonempty, it is an irreducible set; it is connected in the Zariski topology and thus also in the euclidean one.
\end{proof}

\begin{example}[continuation of \Cref{es_homotopy}]
Let $\mathscr{S}$ be the poset associated to $\Delta$ or, equivalently, to $\Delta'$.
The discriminantal toric arrangement associated to $\left( \begin{smallmatrix}
1 & 1 & 0 & 0 \\
0 & 7 & 1 & 1 
\end{smallmatrix} \right)$ is the centred toric arrangement in $(\C^*)^4= \spec \C[\{a_i^{\pm 1}\}_{i \leq 4}]$ given by the subtori
\begin{align*}
B_{3,4}&= \mathcal{V}(1-a_3a_4^{-1})\\
B_{1,2,3}&= \mathcal{V}(1-a_1a_2^{-1}a_3^7) \\
B_{1,2,4}&= \mathcal{V}(1-a_1a_2^{-1}a_4^7) \\
B_{1,2,3,4}&= \mathcal{V}(1-a_1a_2^{-1}a_3^7, 1-a_3a_4^{-1}).
\end{align*}
All the others $B_{\underline{j}}$ are equal to $(\C^*)^4$ and consequently the subset $L(\mathscr{S})$ is: 
\[L(\mathscr{S})=B_{1,2,3} \cap B_{1,2,4} \setminus B_{3,4}=\{\underline{a} \mid a_1^{-1}a_2=a_3^7=a_4^7, \, a_3 \neq a_4\}\]
Hence the set $L(\mathscr{S})$ is the disjoint union of six connected $2$-dimensional tori.
The two arrangements $\Delta$ and $\Delta'$ belong to different connected components so they cannot be deformed one into the other by means of translations.
We have thus shown that $\Delta$ and $\Delta'$ have the same characters and the same poset of intersections but are not poset isotopy equivalent, see \Cref{def:poset_isotopic_equivalent}.
\end{example}

\begin{de}\label{def:poset_isotopic_equivalent}
A \textit{deformation} of a toric arrangement is a collection of $n$ hypersurfaces $H_i$ in $(\C ^*)^r \times B$ (where $B$ is an algebraic variety over $\C$) such that for every point $b\in B$ the subset $H_i \cap \pr_2^{-1}(b)$ is a hypertorus in $(\C^*)^r \times \{b\}$.
We call $M_b$ the open set $(\C^*)^r \times \{b\} \setminus \bigcup_{i\leq n} H_i$.

A deformation is said to be a \textit{poset-preserve deformations} if the poset of layers of $M_b$ does not depend on the point $b \in B$.

We said that two toric arrangements $M_1, M_2 \subset T$ are \textit{poset isotopic equivalent} if there exists a layers-preserve deformation in $T \times B$, $B$ connected, such that the pair $(T,M_1)$ (and $(T,M_2)$) is isomorphic to a fiber $(T\times \{b_1\} ,M_{b_1})$ (and, respectively, to $(T\times \{b_2\} ,M_{b_2})$).
\end{de}

The next result is an analogous to the one on hyperplane arrangements of Randell \cite{Randell}.
\begin{teo} \label{teo_diffeom}
If the toric arrangements $M, M' \subset T$ are poset isotopic equivalent then $M$ and $M'$ are diffeomorphic.
\end{teo}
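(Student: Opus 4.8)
The plan is to adapt Randell's argument \cite{Randell}: we build a diffeomorphism between the two complements by integrating a controlled vector field on the total space of the deformation, obtained from Thom's first isotopy lemma. Since being diffeomorphic is an equivalence relation and $B$ is connected (hence path-connected), after pulling the deformation back along a smooth arc in $B$ we may assume $B=[0,1]$, with $M$ and $M'$ the fibres over $0$ and $1$. Recall also that a deformation only translates the hypertori, so it is the restriction of the universal family of hypertori over $(\C^*)^n$ along a map $[0,1]\to(\C^*)^n$, $b\mapsto(a_i(b))_i$.

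First I would set up the stratified total space. Let $\Sigma=\bigcup_{i\le n}H_i\subset T\times[0,1]$ and, for each layer $W\in\S$, let $\mathcal W\subset T\times[0,1]$ be the union over $b$ of the connected component of $\bigcap_{i\in\underline j(W)}H_i(b)$ corresponding to $W$. By \cref{lemma_tori_discr} each fibre $W_b$ is the translate of one fixed subtorus of $T$, the translation depending smoothly on $b$; since the deformation preserves $\S$ the number of such components is constant, so $\mathcal W$ is a smooth submanifold of $T\times[0,1]$ and $\pi\colon\mathcal W\to[0,1]$ is a trivial fibration. The locally closed sets $\mathcal W^{\circ}=\mathcal W\setminus\bigcup_{W'>W}\mathcal W'$ ($W\in\S$) together with the open set $(T\times[0,1])\setminus\Sigma$ partition $T\times[0,1]$, giving a stratification on which $\pi$ is a submersion stratum by stratum. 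It is Whitney $(b)$-regular: regularity is local on $T$, and in a chart $V_x\cong\T_x T$ around any point the stratification is the total space over $[0,1]$ of a poset-constant deformation of the hyperplane arrangement $\Delta[x]$ with its stratification by flats, for which Whitney regularity is classical.

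To turn a stratified submersion into a diffeomorphism of fibres I need $\pi$ proper, which fails because $T$ is noncompact; this is the main obstacle. The plan is to fix a smooth projective toric compactification $\overline T\supset T$, pass to the closures $\overline{H_i}$, and work with the closed set $\Xi=\overline\Sigma\cup(\partial\overline T\times[0,1])\subset\overline T\times[0,1]$, whose open complement has fibres exactly the $M_b$. One extends the stratification above to a Whitney stratification of $\overline T\times[0,1]$, the new strata lying over $\partial\overline T$, and checks that $\pi$ is still submersive on each stratum. The key point is that near $\partial\overline T$ the equation $1-a_i\chi_i=0$ degenerates in a way controlled by the character $\chi_i$ alone (not by $a_i$), so that on each boundary stratum one again has a poset-preserving family of lower-dimensional toric arrangements and $\pi$ remains a submersion; this requires choosing $\overline T$ so that the divisors $\overline{H_i}$ and the toric boundary are in suitably general position. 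Once this is done, $\pi\colon\overline T\times[0,1]\to[0,1]$ is proper, and Thom's first isotopy lemma yields a stratum-preserving trivialization over $[0,1]$; since in its controlled-vector-field proof this trivialization is a diffeomorphism on each stratum, its restriction to the open stratum is a diffeomorphism $M=M_0\xrightarrow{\ \sim\ }M_1=M'$.

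I expect the delicate part to be precisely this boundary analysis --- choosing the toric compactification and verifying that $\pi$ is a proper \emph{stratified} submersion there, \ie that translating the hypertori does not change the combinatorics of their intersections with the toric boundary, together with the bookkeeping that the restriction of a poset-preserving deformation to each boundary stratum is again poset-preserving. By contrast, the Whitney regularity (local, reduced to the hyperplane case) and the final passage from "stratified trivialization" to "diffeomorphism of the open stratum" are routine, the latter because $M_b$ is a single open stratum.
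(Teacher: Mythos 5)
Your overall strategy is the same as the paper's: follow Randell \cite{Randell}, compactify to make the family proper, and apply Thom's first isotopy lemma (\cite{Goresky-MP}) to a proper stratified submersion, then restrict the stratum-preserving trivialization to the open stratum. The interior part of your argument (Whitney regularity away from the boundary via local reduction to hyperplane arrangements, submersivity on strata from poset-preservation, reduction to a path in $B$) is fine. The genuine gap is exactly where you flag it, and it is not a routine check that you can defer: you need a compactification of the \emph{family} in which (i) the total space is smooth and proper over the base, (ii) the boundary $\Xi$ admits a Whitney stratification, and (iii) the projection to the base is a submersion on every boundary stratum, uniformly in $b$. Your proposed fix --- choose a smooth projective toric compactification $\overline{T}$ with the closures $\overline{H_i}$ and the toric boundary ``in suitably general position'' --- is asserted, not proved, and in general fails as stated: the closure of a translated hypertorus in a fixed toric compactification need not even be smooth (its fan need not be compatible with that of $\overline{T}$), and the union of several such closures with the boundary divisors is typically not normal crossing and carries no evident Whitney stratification without further blow-ups. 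Producing such a compactification is precisely the nontrivial content that the paper does not reprove but imports: it places the deformation parameter inside the torus (working with the hypertori $1-a_j\chi_j(z)=0$ in $T\times U$, where $U$ is the closure of a component of $L(\S)$), applies the wonderful model of \cite{wonderfulmodel} to this single larger toric arrangement to get a smooth projective variety whose boundary is a normal crossing divisor, and then restricts over $B$; properness is then automatic (\cref{lemma_smooth_proj}) and Whitney regularity follows from Randell's lemma because every stratum of a normal crossing stratification has smooth closure (\cref{lemma_Whitney_strat}).

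So while your plan identifies the right obstacle (non-properness of $\pi$ on $T\times[0,1]$) and the right tool, the step ``one extends the stratification to $\overline{T}\times[0,1]$ and checks that $\pi$ is still a stratified submersion'' is where the actual work lies, and your sketch would essentially have to reconstruct the blow-up construction of \cite{wonderfulmodel} (or an embedded resolution of $\overline{\Sigma}\cup\partial\overline{T}$ compatible with the family) to carry it out. As written, the proof is incomplete at this point; either invoke the wonderful model of the family as the paper does, or supply the boundary resolution and the verification that the resulting boundary strata form poset-constant, hence submersive, families over the base.
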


Since the group character $\Lambda$ of a torus $T$ is a discrete set, two poset isotopy equivalent arrangements $M$ and $M'$ are described by the same characters $\chi_1, \cdots, \chi_n$.
Let $\S$ be the poset of layers of $M$ or equivalently of $M'$, the base $B$ of the deformation can be chosen to be a connected component of $L(\S)$.
Call $U$ the closure of $B$ in $(\C^*)^n$, clearly $U$ is a connected torus (possibly translated) of dimension $m$.

Consider the torus $T \times U$ with coordinates $(z_i, a_j)$ as before, with hypertori defined by $1-a_j \chi_j(z)=0$ and call $\tilde{M}$ the toric arrangement $(T \times U) \setminus \bigcup_{i=1,\dots,n} \mathcal{V}(1-a_j \chi_j(z))$. 
Choose a component-wise embedding of $T \times U$ in $(\mathbb{P}^1)^r \times (\mathbb{P}^1)^m $.
The main result of \cite{wonderfulmodel} is that there exists a smooth project variety $\mathcal{W}$ obtained from $(\mathbb{P}^1)^{r+m}$ by means of a suitable sequence of blow-ups that contains $\tilde{M}$.
Call $\mathcal{W}_B$ the inverse image of $(\mathbb{P}^1)^{r}\times B$ through the natural map $\mathcal{W}\rightarrow (\mathbb{P}^1)^{r+m}$.

Let $w: \mathcal{W} _B \rightarrow B$ be the composition of the natural map $\mathcal{W}_B \rightarrow (\mathbb{P}^1)^r \times B$ with the projection onto the second component.

\begin{lemma} \label{lemma_smooth_proj}
The map $w: \mathcal{W} _B \rightarrow B$ is a projective smooth map.
\end{lemma}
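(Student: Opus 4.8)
The plan is to split the statement into two parts, projectivity and smoothness of $w$, the first being formal and the second resting on the constancy of the combinatorial type of the family of arrangements over $B$. Let $\rho\colon\mathcal{W}\to(\mathbb{P}^1)^m$ be the composition of the structure map $\mathcal{W}\to(\mathbb{P}^1)^{r+m}$ with the second projection; identifying $B$ with its image in $(\mathbb{P}^1)^m$, we have $\mathcal{W}_B=\rho^{-1}(B)$ and $w=\rho|_{\mathcal{W}_B}$. Projectivity of $w$ is then immediate: the map $\mathcal{W}\to(\mathbb{P}^1)^{r+m}$ is a finite composition of blow-ups along closed subvarieties, hence projective, so its base change $\mathcal{W}_B\to(\mathbb{P}^1)^r\times B$ is projective; composing with the projection $(\mathbb{P}^1)^r\times B\to B$, which is projective because $(\mathbb{P}^1)^r$ is, shows that $w$ is projective.

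For smoothness I would prove the stronger assertion that $\rho$ is a smooth morphism over the open subset $B$; restricting over $B$ then gives that $w$ is smooth, of relative dimension $r$, and in particular that $\mathcal{W}_B$ is a smooth variety. By the wonderful-model construction of \cite{wonderfulmodel}, $\mathcal{W}$ is obtained from $(\mathbb{P}^1)^{r+m}$ by blowing up, in a suitable order, the strict transforms of the members of a building set $\mathcal{G}$ for the toric arrangement $\tilde{M}\subset T\times U$; the members of $\mathcal{G}$ are the irreducible components of the intersections of the hypertori $\mathcal{V}(1-a_j\chi_j(z))$ with one another and with the toric boundary divisors of $(\mathbb{P}^1)^{r+m}$. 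I would proceed by induction on this sequence of blow-ups, the inductive hypothesis being that each intermediate variety is smooth over $B$ and that the center of the next blow-up, restricted over $B$, is smooth with submersive projection onto $B$. Since blowing up a variety smooth over $B$ along a center smooth over $B$ again produces a variety smooth over $B$, the induction yields the claim. The inductive step is the crucial one: by \Cref{lemma_tori_discr} every intersection of hypertori is, over the locus $B_{\underline{j}}$ where it is nonempty, a torus bundle constant up to translation, and over $B$ the whole incidence pattern of these intersections — equivalently, the poset $\mathscr{S}$ of the fibrewise arrangement in $T$ — is constant by the very definition of $L(\mathscr{S})$; hence over $B$ the building set $\mathcal{G}$ and all its iterated strict transforms form an equisingular family, \'etale-locally a product with $B$, so each center is smooth over $B$ with surjective differential onto $TB$.

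To conclude, I would observe that restricting the blow-up centers over a point $\underline{a}\in B$ recovers exactly the building set of the arrangement $M_{\underline{a}}\subset T$ for the induced compactification $T\hookrightarrow(\mathbb{P}^1)^r$, so that the fibre $w^{-1}(\underline{a})$ is canonically the wonderful model of $M_{\underline{a}}$, a smooth projective variety of dimension $r$; this reconfirms the flatness of $w$ and the expected fibre dimension. The only genuine obstacle is the inductive step above — the equisingularity over $B$ of $\mathcal{G}$ and its transforms — and it is precisely there that the hypothesis $B\subseteq L(\mathscr{S})$, \ie the constancy of the poset of layers along $B$, is used, together with the explicit combinatorial description of the blow-up centers in \cite{wonderfulmodel} and with \Cref{lemma_tori_discr}.
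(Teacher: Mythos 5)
Your argument reaches the right conclusion, and the projectivity half coincides with the paper's: both factor $w$ through $(\mathbb{P}^1)^r\times B$ and use that a composition of blow-ups is projective, that base change preserves projectivity, and that $(\mathbb{P}^1)^r\times B\to B$ is projective. Where you genuinely diverge is on smoothness. The paper disposes of it in one line, asserting that $\mathcal{W}_B\to(\mathbb{P}^1)^r\times B$ is ``smooth by base change of a blow-up map'' and composing with the smooth projection --- an argument that, taken literally, proves less than it claims, since a blow-up along a positive-codimension center is projective and birational but not a smooth morphism, so smoothness does not pass through base change this way; in the end what the application in \cref{teo_diffeom} actually extracts from this lemma is properness, the submersivity along strata being checked there separately from the constancy of the poset. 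Your route instead does the real work: induction on the blow-up tower of \cite{wonderfulmodel}, using that blowing up a $B$-smooth variety along a $B$-smooth center stays $B$-smooth, and deriving $B$-smoothness of each center from the constancy of the poset of layers over $B$ (the definition of $L(\mathscr{S})$) together with \cref{lemma_tori_discr}. This buys a statement that is literally true as a smooth morphism, makes visible exactly where the hypothesis $B\subseteq L(\mathscr{S})$ enters, and identifies the fibres of $w$ as the wonderful models of the individual arrangements $M_{\underline{a}}$. The one point you should spell out is the equisingularity you invoke for the centers: they are closures and iterated strict transforms inside $(\mathbb{P}^1)^{r+m}$, so constancy over $B$ must also be checked along the toric boundary, not only inside $T\times U$; this does hold, because the closure at infinity of a translated subtorus depends only on the subtorus of which it is a translate (\ie on the characters), not on the translation parameters $\underline{a}$, so the incidence pattern at the boundary is automatically constant over $B$ --- but as written this step is asserted rather than proved, and it is the crux of your induction.
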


\begin{proof}
The morphism $\mathcal{W}_B \rightarrow (\mathbb{P}^1)^r \times B$ is projective and smooth by base change of a blow-up map.
The projection $(\mathbb{P}^1)^r \times B \rightarrow B$ is projective and smooth.
So the composition $w$ is smooth and projective.
\end{proof}


\begin{lemma}\label{lemma_Whitney_strat}
The variety $\mathcal{W}_B$ admits a Whitney stratification whose open stratum is naturally isomorphic to $\tilde{M}$.
\end{lemma}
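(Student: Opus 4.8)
The plan is to pull the standard Whitney stratification of a normal crossing compactification back to $\mathcal{W}_B$, after checking that taking inverse images over $B$ is compatible with that stratification.

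First I would recall what \cite{wonderfulmodel} provides: $\mathcal{W}$ is built from $(\mathbb{P}^1)^{r+m}$ by a finite sequence of blow-ups along smooth centres governed by a building set of nested layers of the toric arrangement $\tilde{M}\subset T\times U$, and inside $\mathcal{W}$ the open set $\tilde{M}$ is the complement of a simple normal crossing divisor $\mathcal{D}$ whose components are the exceptional divisors and the strict transforms of the toric boundary divisors of $(\mathbb{P}^1)^{r+m}$. Such a $\mathcal{D}$ induces a stratification $\mathcal{T}$ of $\mathcal{W}$: the strata are the connected components of $\bigcap_{D\in I}D\setminus\bigcup_{D\notin I}D$, for $I$ a subset of the set of components of $\mathcal{D}$, and the open stratum ($I=\emptyset$) is $\tilde{M}$. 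Near every point $\mathcal{T}$ is biholomorphic to the stratification of $\mathbb{C}^N$ by intersections of coordinate hyperplanes, which satisfies Whitney's conditions (a) and (b); hence $\mathcal{T}$ is a Whitney stratification of $\mathcal{W}$.

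Next I would restrict over $B$. By \cref{lemma_smooth_proj} the morphism $w$ is smooth and projective, so $\mathcal{W}_B=w^{-1}(B)$ is a smooth locally closed subvariety of $\mathcal{W}$. The key point to establish is that $\mathcal{W}_B$ meets every stratum of $\mathcal{T}$ transversally; equivalently, $w$ restricts to a submersion onto $B$ on each stratum. Granting this, $\mathcal{T}$ cuts out a stratification $\mathcal{T}_B$ of $\mathcal{W}_B$ with strata $S\cap\mathcal{W}_B$; since the transverse intersection of a simple normal crossing divisor with a smooth subvariety is again simple normal crossing, $\mathcal{T}_B$ is again locally modelled on coordinate hyperplanes and hence Whitney, and its open stratum is $\tilde{M}\cap\mathcal{W}_B$, i.e.\ the copy of $\tilde{M}$ lying over $B$. (This stratification moreover makes $w$ submersive on each stratum, which is exactly what is needed to apply Thom's first isotopy lemma in the proof of \cref{teo_diffeom}.)

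The \emph{crux} is the transversality claim, which I would prove by following the blow-up construction relative to $B$. The centres of the blow-ups are closures of layers of $\tilde{M}$; by \cref{lemma_tori_discr} and the description of $L(\S)$ in \cref{deform}, over $B$ each such layer has fibres that are mutual translates of a single subtorus of $T$, so the layer maps submersively onto $B$ — this uses that over a connected component of $L(\S)$ the combinatorial type of the arrangement is constant. Hence the first blow-up centre is submersive over $B$, and a blow-up of a variety smooth and submersive over $B$ along a smooth centre submersive over $B$ has its exceptional divisor and the strict transforms of the previous divisors again smooth and submersive over $B$; by induction on the blow-ups, every component of $\mathcal{D}$, and then every intersection of such components, is submersive over $B$ after restriction to $\mathcal{W}_B$, which gives the claim. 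The delicate point to handle with care is that the boundary divisors coming from the $(\mathbb{P}^1)^m$-factor of the compactification are not transverse to the fibres of $w$; they are removed precisely by passing to $\mathcal{W}_B=w^{-1}(B)$, and one must check that the divisors surviving over $(\mathbb{P}^1)^r\times B$ are genuinely transverse to every fibre, which follows from the product structure $(\mathbb{P}^1)^r\times B$ together with the fact that all blow-up centres depend only on the $B$-independent characters $\chi_1,\dots,\chi_n$.
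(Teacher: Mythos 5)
Your proof is correct and its core coincides with the paper's: both extract the stratification from the normal crossing boundary divisor supplied by the wonderful compactification of \cite{wonderfulmodel}, the only cosmetic difference being how the Whitney conditions are checked (you invoke the local coordinate-hyperplane model of a simple normal crossing divisor, while the paper observes that each stratum has smooth closure and cites a lemma of Randell). The real divergence is your ``crux'': the transversality of $\mathcal{W}_B$ with the strata of $\mathcal{W}$ needs no argument at all, because $B$ is a locally closed set, hence open in its closure $U\simeq(\C^*)^m$, so $(\mathbb{P}^1)^r\times B$ is open in $(\mathbb{P}^1)^{r+m}$ and $\mathcal{W}_B$ is an \emph{open} subvariety of $\mathcal{W}$; restricting a Whitney stratification to an open set is automatically Whitney, and its open stratum is the part of $\tilde{M}$ lying over $B$, which is all that \cref{lemma_Whitney_strat} asserts. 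What your induction on blow-up centres actually establishes --- that each stratum is submersive over $B$, via \cref{lemma_tori_discr} and the constancy of the poset of layers over a connected component of $L(\S)$ --- is not the content of this lemma but the hypothesis of Thom's isotopy theorem used in the proof of \cref{teo_diffeom}, where the paper justifies it in a single sentence; so that portion of your argument is not wasted (it usefully expands that step), but it belongs there, and presenting it as ``equivalent'' to transversality of $\mathcal{W}_B$ with the strata is inaccurate: the latter is vacuous by openness, the former is a genuinely fibrewise statement.
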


\begin{proof}
The complement $\mathcal{K}$ of $\tilde{M}$ in $\mathcal{W}_B$ is a union of some smooth divisors $\mathcal{K}_i$, $i=1, \dots,k$.
Moreover $\mathcal{K}$ is normal crossing (see \cite{wonderfulmodel}).
Consider the stratification given by the closed sets $\mathcal{K}_i$, $i=1, \dots, k$.
Each stratum has smooth closure in $\mathcal{W}_B$, therefore by \cite[Lemma]{Randell} this is a Whitney stratification. 
\end{proof}

To prove of \cref{teo_diffeom} we follow the ideas of \cite{Randell}.

\begin{proof}[Proof of \cref{teo_diffeom}]
We apply the Thom's isotopy theorem (\cite[Section I Theorem 1.5]{Goresky-MP}) to the map $w$.
Consider the Whitney stratification on $\mathcal{W}_B$ of \cref{lemma_Whitney_strat}.
Since the poset of layers is the same for every point $b\in B$, the restriction of the map $w: \mathcal{W} _B \rightarrow B$ to every stratum is a submersion. 
Hence for all $b \in B$, there exists a smooth stratum-preserving map $h$ such that the diagram below commutes:
\begin{center}
\begin{tikzpicture}[scale=1.8]
\node (xa1) at (1,1) {$B \times w^{-1}(b)$};
\node (x1) at (-1,1) {$\mathcal{W}_B$};
\node (x2) at (0,0) {$B$};
\draw[->] (x1) to node[above]{$h$} (xa1);
\draw[->] (x1) to node[left]{$w$} (x2);
\draw[->] (xa1) to node[right]{$pr_1$} (x2);
\end{tikzpicture}
\end{center}
Thus for all $b'\in B$, $M_b$ and $M_{b'}$ are diffeomorphic.
\end{proof}

\section{Cohomology generated in degree one}
\label{sect_gen_deg_one}
In this section we will analyze the property of the cohomology ring of being generated in degree one.
We will show that this property depends only on the poset of intersections and we will give a combinatorial criterion to determine when this property holds.

\begin{lemma}
Let $B$ be a graded algebra and $\{F_i\}_{i \in \N}$ an increasing homogeneous filtration such that $\bigcup_{i \in \N} F_i=B$.
Then $B$ is generated in degree one if and only if $\gr_{F} B$ does.
\end{lemma}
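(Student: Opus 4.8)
The plan is to run everything through the \emph{symbol} (leading-term) calculus of the filtration. For $b\in B$ write $\mathrm{fil}(b)=\min\{i\mid b\in F_i\}$ and let $\sigma(b)=b+F_{\mathrm{fil}(b)-1}\in F_{\mathrm{fil}(b)}/F_{\mathrm{fil}(b)-1}$ be its symbol (with $F_{-1}=0$). Since the filtration is homogeneous, $\gr_F B=\bigoplus_i F_i/F_{i-1}$ carries, besides the filtration index, the grading induced by that of $B$; the relevant notion of degree is the latter, so ``generated in degree one'' always refers to the $B$-grading on both sides. Two elementary facts are used throughout: (a) the degree-one elements of $\gr_F B$ are exactly the symbols $\sigma(c)$ of degree-one elements $c\in B^1$; (b) for $b,b'\in B$ one has $\sigma(b)\sigma(b')=\sigma(bb')$ whenever $\mathrm{fil}(bb')=\mathrm{fil}(b)+\mathrm{fil}(b')$, and $\sigma(b)\sigma(b')=0$ in $\gr_F B$ otherwise.

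\emph{($\Leftarrow$) If $\gr_F B$ is generated in degree one then so is $B$.} Let $R\subseteq B$ be the subalgebra generated by $B^1$; by exhaustiveness it suffices to prove $F_i\subseteq R$ for all $i$, and I would do this by induction on $i$, the case $F_{-1}=0$ being trivial. Given a homogeneous $b\in F_i$, expand $\sigma(b)\in F_i/F_{i-1}$ as a sum of products of degree-one elements of $\gr_F B$; by (a) each factor is $\sigma(c)$ with $c\in B^1$, and every product that contributes to the piece $F_i/F_{i-1}$ has the filtration levels of its factors summing to $i$ and is nonzero, hence by (b) equals $\sigma$ of the corresponding product in $B$, which lies in $F_i\cap R$. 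Summing these lifts gives $b'\in R\cap F_i$ with $\sigma(b')=\sigma(b)$, so $b-b'\in F_{i-1}\subseteq R$ by induction and $b\in R$. This direction works with no extra hypotheses.

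\emph{($\Rightarrow$) If $B$ is generated in degree one then so is $\gr_F B$.} This is the real content. Let $\widetilde R\subseteq\gr_F B$ be the subalgebra generated by its degree-one part; every bihomogeneous element of $\gr_F B$ is $\sigma(b)$ for some homogeneous $b\in B$, and we must show $\sigma(b)\in\widetilde R$. Since $B$ is generated in degree one, write $b=\sum_k p_k$ with each $p_k=c_{k,1}\cdots c_{k,n}$ a product of elements of $B^1$, set $i=\mathrm{fil}(b)$, and pass to symbols: terms with $\mathrm{fil}(p_k)<i$ do not affect the class in $F_i/F_{i-1}$, while for the others $\mathrm{fil}(p_k)=\sum_j\mathrm{fil}(c_{k,j})=i$, so by (b) $\sigma(p_k)=\sigma(c_{k,1})\cdots\sigma(c_{k,n})\in\widetilde R$ and $\sigma(b)=\sum_k\sigma(p_k)\in\widetilde R$. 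The step that needs care — and which I expect to be the main obstacle — is that a product $p_k$ may a priori have filtration level \emph{larger} than $i$, with such high-filtration contributions cancelling against one another, so that the naive grouping is not justified for an arbitrary presentation of $b$. To repair this I would first prove a normalization statement: any homogeneous $b$ of filtration level $i$ admits a presentation as a sum of products of degree-one elements in which every product already lies in $F_i$; this is precisely where the structural hypotheses on the filtration enter (homogeneity, together with the fact that degree-one elements of $B$ occupy only low filtration, lets one successively lower the filtration of an offending factor by substituting relations of $B$). Once this normalization is available the two-line computation above yields $\sigma(b)\in\widetilde R$, and, adding the immediate degree-zero and degree-one cases, the proof is complete.
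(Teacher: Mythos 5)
Your $(\Leftarrow)$ direction (if $\gr_F B$ is generated in degree one then so is $B$) is correct: it is the standard induction on the filtration level via symbols, and the small imprecisions (bihomogeneous degree-one elements of $\gr_F B$, rather than all degree-one elements, are symbols of elements of $B^1$; contributing products with vanishing symbol can simply be discarded) are harmless. The paper states the lemma without proof, so there is no argument of the author's to compare against. The problem is your $(\Rightarrow)$ direction, which you yourself leave open: you reduce it to an unproven ``normalization statement'' and assert that it would finish the proof, but neither half of that is right. Even granting a presentation $b=\sum_k p_k$ with every product $p_k=c_{k,1}\cdots c_{k,n}$ lying in $F_i$, your appeal to fact (b) needs $\mathrm{fil}(p_k)=\sum_j\mathrm{fil}(c_{k,j})$, and this does not follow from $p_k\in F_i$: a product of degree-one elements can drop filtration, and then $\sigma(p_k)$ is \emph{not} $\sigma(c_{k,1})\cdots\sigma(c_{k,n})$ (that product sits in a higher filtration component and vanishes), so there is no reason for $\sigma(p_k)$ to lie in the subalgebra generated by degree one.

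The gap is not merely technical: from the hypotheses as literally stated the implication cannot be proved, because it is false at that level of generality. Take $B=\Q[x]$ with $\deg x=1$ and $F_i=\operatorname{span}(1,x,\dots,x^{2i})$; this filtration is increasing, exhaustive, homogeneous and multiplicative, and $B$ is generated in degree one, yet the degree-one part of $\gr_F B$ is spanned by $\sigma(x)\in F_1/F_0$, whose square is the class of $x^2$ in $F_2/F_1$, i.e.\ zero; hence the nonzero class of $x^2$ in $F_1/F_0$ is not a polynomial in degree-one elements, and $\gr_F B$ is not generated in degree one. Note that $x\cdot x$ already lies in $F_1$, so your normalization statement holds here and the conclusion still fails. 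A correct proof of this direction must therefore use the specific filtration of the paper, $F_i=H^{\le i}(M;\Z)\cdot H^\bullet(T;\Z)$: there $B^1=H^1(M;\Z)\subseteq F_1$, the subring $F_0=H^\bullet(T;\Z)$ is generated by its degree-one part, and every class in $F_i/F_{i-1}$ of cohomological degree $d$ is represented by a sum of products $\beta\gamma$ with $\beta\in H^i(M;\Z)$ and $\gamma\in H^{d-i}(T;\Z)$; writing $\beta$ as a sum of products of $i$ elements of $H^1(M;\Z)\subseteq F_1$ and $\gamma$ as a product of elements of $H^1(T;\Z)\subseteq F_0$ keeps the filtration bookkeeping exact, and then your symbol computation goes through. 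Either import these structural facts explicitly (or add them as hypotheses); as written, the second half of your proof is missing.
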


Since $H^\bullet(T;\Z)\simeq H^\bullet(\C^*;\Z)^{\otimes r}$ is generated in degree one, $H^\bullet(M;\Z)$ is generated in degree one if and only if $A^{0,\bullet}$ is as well.
By \cref{main_on_integers}, we work on $GOS^{0,\bullet}(M;\Z)$ instead of on $A^{0,\bullet}$.
A similar argument show that $H^\bullet(M;\Q)$ is generated in degree one if and only if the same holds for $GOS^{0,\bullet}(M;\Q)$.

In this section we will work with the assumption that $\Delta$ is a primitive arrangement.

\begin{prop}
Let $\Delta$ be a primitive toric arrangement and $S$ be a standard list of cardinality $k$.
The following formula holds in $GOS^{0,\bullet}(M;\Z)$:
\[ y_S= \sum_{\substack{W\in \mathscr{S}_k \\ W \in \bigcap S}} y_{W,S}\]
\end{prop}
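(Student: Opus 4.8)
The plan is to prove the identity by induction on $k=|S|$, using associativity and the product rule of $GOS^{\bullet,\bullet}(M;\Z)$ (\cref{def_GOS_int}) together with elementary geometry of the layers. Throughout I use that $\bigcap_{i\in I}H_i$ is a union of cosets of the subtorus $\bigl(\bigcap_{i\in I}\ker\chi_i\bigr)^\circ$, so each of its connected components is irreducible of codimension $\rk I$ and any connected subset of $\bigcap_{i\in I}H_i$ lies in exactly one of them; moreover, $\Delta$ being primitive, each $H_i$ is connected and equals the atom $A_i\in\mathscr{S}_1$, so that the degree-$(0,1)$ generator indexed by $i$ is unambiguously $y_{A_i,(i)}$ and $y_S=y_{s_1}\cdots y_{s_k}$ with $y_{s_j}=y_{A_{s_j},(s_j)}$.

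For $k=1$, $S=(i)$, the right-hand side runs over the layers $W\in\mathscr{S}_1$ with $W\subseteq H_i$; since $H_i$ is irreducible of codimension one this is just $H_i=A_i$, and both sides equal $y_{A_i,(i)}$. For the inductive step write $S=S's$ with $S'=(s_1,\dots,s_{k-1})$ and $s=s_k$; both $S'$ and the concatenation $S=S's$ are standard because $s$ is the largest index. If $S$ is dependent, both sides vanish: on the left $y_S=y_{S'}\cdot y_{A_s,(s)}$ is $0$ either by induction (if $S'$ is dependent) or by the product rule, which kills every product indexed by the dependent set $S$; on the right $y_{W,S}=0$ for all $W$, a dependent $S$ being associated to no layer. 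So assume $S$, hence $S'$, independent. By induction $y_{S'}=\sum_{W'}y_{W',S'}$, the sum over the connected components $W'$ of $\bigcap_{i\in S'}H_i$. Multiplying by $y_{A_s,(s)}$ and expanding by the product rule gives
\[ y_S=\sum_{W'}\ \sum_{W''}\ y_{W'',S},\qquad W''\text{ running over the connected components of }W'\cap H_s. \]

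The crux is the identification of this double sum with $\sum_{W\in\mathscr{S}_k,\ W\subseteq\bigcap S}y_{W,S}$, which comes from the bijection $(W',W'')\mapsto W''$ between the pairs appearing above and the layers $W\in\mathscr{S}_k$ contained in $\bigcap_{i\in S}H_i$. Indeed $\bigcap_{i\in S}H_i=\bigsqcup_{W'}(W'\cap H_s)$, so the connected components of $\bigcap_{i\in S}H_i$ are precisely those of the various $W'\cap H_s$; since $S$ is independent they all have codimension $k$ and hence belong to $\mathscr{S}_k$, and conversely any layer of codimension $k$ inside $\bigcap_{i\in S}H_i$ is one of its components by irreducibility. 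Uniqueness of $W'$ is clear: $W''$ is connected and contained in $\bigcap_{i\in S'}H_i$, hence in a single component of it. This closes the induction.

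The argument is routine and I do not expect a genuine obstacle. The two points demanding care are the degenerate case (a dependent list $S$, where one must see that both sides vanish and for compatible reasons) and the bijection, which rests on the fact that layers are irreducible of the expected codimension. Primitivity enters only to make the degree-$(0,1)$ generators $y_{(i)}$ unambiguous and to ensure that the full hypertorus $H_s$ — rather than a proper connected component of it — appears in the product $y_{S'}\cdot y_{A_s,(s)}$.
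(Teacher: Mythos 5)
Your argument is correct and is essentially the paper's own proof, which simply declares the formula an easy consequence of the product rule of \cref{def_GOS_int}; your induction on $k$ just spells out that expansion, with the component-by-component bookkeeping, the dependent case, and the role of primitivity (connectedness of each $H_i$) all handled as intended.
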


\begin{proof}
The formula is an easy consequence of the definition of product (\cref{def_GOS_int}) in the algebra $GOS^{\bullet,\bullet}(M;\Z)$.
\end{proof}

\begin{remark}
By \cref{lemma_generetors_int}, the elements $y_{W,S}$ can be written uniquely as linear combinations of the $y_{W,S'}$ with $S'$ no broken circuit, using the formulae \ref{eq_reorder_Z}- \ref{eq_relazioni_int}.
\[ y_{W,S} = \sum_{T \textnormal{ no broken}} r_{W,T}^S y_{W,T}\]
The coefficients $r^S_{W,T}$ are uniquely determined by the poset $\mathscr{S}_{\leq W}$.
\end{remark}

We proved that if $y_{S} = \sum r^S_{W,T} y_{W,T}$, where the sum runs over all layers $W$ and all no broken circuits $T$ associated to $W$, then the coefficients $r^S_{W,T}$ are integers.
We collect this data in a matrix $R^k$ whose columns are given by $(r^S_{W,T})_{W,T}$ for all $S$ of cardinality $k$; these are merely the coordinates of the element $y_S$ with respect to the basis $\{y_{W,T} \mid T \textnormal{ no broken circuit} \}$.

\begin{teo}\label{teo_gen_deg_uno_rat}
The algebra $H^\bullet(M(\Delta);\Q)$ is generated in degree one if and only if all the matrices $\{R^k \}_ {k \leq r}$ have rank equal to $\dim_{\Q}H^k(M(\Delta);\Q)$.
\end{teo}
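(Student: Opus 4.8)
The plan is to reduce the statement to a rank computation for the ``multiplication of degree‑one generators'' maps on the single graded strand $GOS^{0,\bullet}(M(\Delta);\Q)$, and then to recognise the matrix of such a map as $R^k$. By the reduction made just before the statement, $H^\bullet(M(\Delta);\Q)$ is generated in degree one if and only if $GOS^{0,\bullet}(M(\Delta);\Q)$ is; so it suffices to decide, for each $k\le r$, when the subalgebra generated by $GOS^{0,1}$ already exhausts the piece $GOS^{0,k}$. Since $\Delta$ is primitive, each hypertorus $H_i$ is connected, so $\mathscr{S}_1$ is identified with $E$ and $GOS^{0,1}$ is spanned by the classes $y_i\defeq y_{H_i,\{i\}}$; hence generation in degree one says exactly that the $k$‑fold products $y_{i_1}\wedge\cdots\wedge y_{i_k}$ span $GOS^{0,k}$ for every $k$.

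First I would make these products explicit. Using the reorder relations \ref{eq_reorder_Z} together with the fact that $y_{W,S}\cdot y_{W',S'}=0$ whenever $SS'$ is dependent (in particular $y_i\wedge y_i=0$), every nonzero product $y_{i_1}\wedge\cdots\wedge y_{i_k}$ equals, up to sign, $y_S$ for the standard list $S=\{i_1<\cdots<i_k\}$; and by the formula $y_S=\sum_{W\in\mathscr{S}_k,\,W\subseteq\bigcap S}y_{W,S}$ established above, each such $y_S$ lies in $GOS^{0,k}$. Thus the degree‑$k$ part of the subalgebra generated in degree one is precisely $\langle\,y_S\mid S\subseteq E,\ |S|=k\,\rangle_\Q\subseteq GOS^{0,k}$. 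By \cref{thm:main_on_rational} the classes $y_{W,T}$ with $T$ a no‑broken‑circuit associated to $W$ (necessarily with $\rk W=|T|=k$) form a $\Q$‑basis of $GOS^{0,k}$, and rewriting each $y_S$ in this basis by means of \ref{eq_reorder_Z}--\ref{eq_relazioni_int} produces, by definition, the columns of $R^k$. Hence $\langle\,y_S\mid |S|=k\,\rangle_\Q$ is exactly the column space of $R^k$.

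Combining the two steps, $GOS^{0,k}$ is generated by its degree‑one elements if and only if the column space of $R^k$ is all of $GOS^{0,k}$, that is, if and only if $\rk R^k$ attains the maximal value $\dim_\Q GOS^{0,k}(M(\Delta);\Q)$ — the number appearing in the statement, which is the count of rank‑$k$ no‑broken‑circuit pairs read off from the Poincaré polynomial \eqref{poly_char} via \cref{main_on_cohomology}. Quantifying over $k\le r$ yields the asserted equivalence. The main obstacle is the bookkeeping in the middle step: one has to verify that the reorder relations genuinely collapse every product of degree‑one generators to a single $y_S$, that no extraneous basis vectors (layers $W$ with $\rk W\neq k$, or lists $T$ that are not no‑broken‑circuits) enter the expansions, and that the no‑broken‑circuit classes really form a basis in each fixed bidegree $(0,k)$ — all of which follow from \cref{lemma_generators} and the argument proving \cref{thm:main_on_rational}, but must be assembled with care.
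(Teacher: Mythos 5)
You follow essentially the same route as the paper: reduce generation in degree one to the strand $GOS^{0,\bullet}(M(\Delta);\Q)$, identify the $k$-fold products of the degree-one classes $y_i$ with the elements $y_S$, observe that in the no-broken-circuit basis the column space of $R^k$ is exactly the span of these $y_S$, and conclude that generation in degree $k$ amounts to $R^k$ having full row rank --- the paper's proof is just a compressed version of this (``$R^k$ has a right inverse''). The one inaccurate point is your closing identification of $\dim_\Q GOS^{0,k}$ with ``the number appearing in the statement'', $\dim_\Q H^k(M(\Delta);\Q)$, justified via the Poincar\'e polynomial: by \eqref{poly_char} one has $\dim_\Q H^k(M;\Q)=\sum_{q\le k}|\mathcal{N}_q|\binom{r-q}{k-q}$, which strictly exceeds $|\mathcal{N}_k|=\dim_\Q GOS^{0,k}$ for every $1\le k\le r$ (the $q=0$ term alone contributes $\binom{r}{k}$), so these two numbers never coincide, and since $\rk R^k\le |\mathcal{N}_k|$ the correct threshold in the criterion is $\dim_\Q A^{0,k}=|\mathcal{N}_k|$ rather than $\dim_\Q H^k(M;\Q)$. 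This conflation, however, appears verbatim in the paper's own statement and proof, so it is an imprecision you inherited rather than a gap in your reasoning; apart from it, your argument is sound and matches the intended one.
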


\begin{proof}
Fix the degree $k$.
The submodule $H^k(M;\Q)$ is included in the subalgebra generated by $H^1(M;\Q)$ if and only if $R^k$ has a right inverse; this happens if and only if $R^k$ has rank equal to $\dim_{\Q}H^k(M(\Delta);\Q)$.
\end{proof}

\begin{de}
The \textit{$r$-th determinant divisor} of an integer matrix is the greatest common divisor of all the determinants of its minors of size $r$.
\end{de}

We call $d_k=\rk_{\Z}H^k(M(\Delta);\Z)$.
\begin{teo}
The algebra $H^\bullet(M(\Delta);\Z)$ is generated in degree one if and only if all the matrices $\{R^k \}_{k\leq r}$ have $d_k$-th determinant divisor equal to one.
\end{teo}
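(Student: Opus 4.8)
The plan is to repeat the proof of \cref{teo_gen_deg_uno_rat} with $\Q$ replaced by $\Z$, so that the rank condition is upgraded to a condition on determinant divisors. The first two reductions have already been made in the discussion preceding \cref{teo_gen_deg_uno_rat}: by the lemma on filtered graded algebras, together with the fact that $H^\bullet(T;\Z)$ is generated in degree one, the ring $H^\bullet(M(\Delta);\Z)$ is generated in degree one if and only if $\gr H^\bullet(M(\Delta);\Z)\cong GOS^{\bullet,\bullet}(M(\Delta);\Z)$ is (here using \cref{main_on_integers}), and this in turn holds if and only if the subalgebra $GOS^{0,\bullet}(M(\Delta);\Z)$ is generated by its degree-one part. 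So I would only have to decide, for each fixed $k\le r$, when $GOS^{0,k}(M(\Delta);\Z)$ is contained in the subalgebra generated by $GOS^{0,1}(M(\Delta);\Z)$.

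To translate this containment into the matrix $R^k$, I would use that $\Delta$ is primitive: then the rank-one layers are the connected hypertori $H_i$, so $GOS^{0,1}(M(\Delta);\Z)$ is spanned by the classes $y_i=y_{H_i,i}$, and the degree-$k$ part of the subalgebra generated in degree one is spanned by the $k$-fold products of the $y_i$, which by the Proposition of this section are precisely the elements $y_S$ for $S$ a standard list of cardinality $k$. Writing these elements in the no-broken-circuit basis $\{y_{W,T}\}$ of the free $\Z$-module $GOS^{0,k}(M(\Delta);\Z)$ is exactly the matrix $R^k$, by its very definition. Hence $GOS^{0,k}(M(\Delta);\Z)$ is contained in the subalgebra generated in degree one if and only if $R^k$, regarded as a map of free $\Z$-modules onto $GOS^{0,k}(M(\Delta);\Z)$, is surjective. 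Here it is important that $GOS^{0,k}(M(\Delta);\Z)$ is torsion-free, which was already proved, so that the column span of $R^k$ is the whole module exactly when $R^k$ is an integral surjection and not merely a rational one.

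Finally I would invoke the Smith normal form: an integer matrix with $d_k$ rows is surjective as a map of free $\Z$-modules onto $\Z^{d_k}$ if and only if its Smith normal form has all $d_k$ invariant factors equal to $1$, equivalently if and only if its $d_k\times d_k$ minors generate the unit ideal of $\Z$, that is, its $d_k$-th determinant divisor equals $1$. Applying this to $R^k$ for every $k\le r$ and combining with the two reductions above gives the theorem. I expect the only genuinely delicate point --- and the reason the statement differs from the rational one in \cref{teo_gen_deg_uno_rat} --- to be the comparison between the rational and the integral span: full rank of $R^k$ over $\Q$ only forces the cokernel of the integral map to be finite, whereas generation in degree one over $\Z$ demands that this cokernel vanish, which is precisely the requirement that the top determinant divisor be a unit rather than merely nonzero.
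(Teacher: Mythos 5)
Your proposal is correct and follows essentially the same route as the paper: reduce via the filtration and \cref{main_on_integers} to asking when $GOS^{0,k}(M(\Delta);\Z)$ lies in the subalgebra generated in degree one, observe that this is the integral column span of $R^k$ in the no-broken-circuit basis, and conclude by Smith normal form that this span is everything exactly when the $d_k$-th determinant divisor is $1$ (the paper phrases this as $R^k$ admitting an integer right inverse, which is the same as your surjectivity condition, and also notes the degenerate case of fewer than $d_k$ columns, which your formulation covers implicitly).
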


\begin{proof}
As in the proof of \cref{teo_gen_deg_uno_rat}, the submodule $H^k(M;\Z)$ is contained in the subalgebra generated by $H^1(M;\Z)$ if and only if $R^k$ has a right inverse with integer coefficients.
By the Smith normal form, this right inverse exists if and only if the $d_k$-th determinant divisor is equal to one.
Notice that if $R^k$ has strictly less then $d_k$ columns, then the $d_k$-th determinant divisor is zero.
\end{proof}


\bibliography{Toric_arr.bib}{}

\begin{thebibliography}{10}

\bibitem{Bibby}
{\sc {Bibby}, C.}
\newblock {Cohomology of abelian arrangements}.
\newblock {\em Proceedings of the American Mathematical Society 144\/} (2016),
  3093--3104.

\bibitem{BM14}
{\sc Br\"and\'en, P., and Moci, L.}
\newblock The multivariate arithmetic {T}utte polynomial.
\newblock {\em Trans. Amer. Math. Soc. 366}, 10 (2014), 5523--5540.

\bibitem{Bredon}
{\sc {Bredon}, G.~E.}
\newblock {\em Sheaf Theory}, 2~ed.
\newblock Graduate Texts in Mathematics 170. Springer-Verlag New York, 1997.

\bibitem{Brieskorn}
{\sc Brieskorn, E.}
\newblock Sur les groupes de tresses.
\newblock {\em Séminaire Bourbaki 14\/} (1971-1972), 21--44.

\bibitem{Callegaro}
{\sc Callegaro, F., and Delucchi, E.}
\newblock The integer cohomology algebra of toric arrangements.
\newblock {\em Advances in Mathematics 313\/} (2017), 746 -- 802.

\bibitem{dAD2012}
{\sc d'Antonio, G., and Delucchi, E.}
\newblock A {S}alvetti complex for toric arrangements and its fundamental
  group.
\newblock {\em Int. Math. Res. Not. IMRN}, 15 (2012), 3535--3566.

\bibitem{dAntonio}
{\sc d'Antonio, G., and Delucchi, E.}
\newblock Minimality of toric arrangements.
\newblock {\em Journal of the European Mathematical Society 017}, 3 (2015),
  483--521.

\bibitem{DCGII}
{\sc {De Concini}, C., and {Gaiffi}, G.}
\newblock {Cohomology rings of compactifications of toric arrangements}.
\newblock {\em ArXiv e-prints\/} (Jan. 2018), 25.

\bibitem{wonderfulmodel}
{\sc {De Concini}, C., and {Gaiffi}, G.}
\newblock Projective wonderful models for toric arrangements.
\newblock {\em Adv. Math. 327\/} (2018), 390--409.

\bibitem{DeConcini}
{\sc De~Concini, C., and Procesi, C.}
\newblock On the geometry of toric arrangements.
\newblock {\em Transformation Groups 10}, 3 (2005), 387--422.

\bibitem{Procesi}
{\sc De~Concini, C., and Procesi, C.}
\newblock {\em Topics in hyperplane arrangements, polytopes and box-splines}.
\newblock Springer, New York, 2011.

\bibitem{Dupa}
{\sc Dupont, C.}
\newblock The {O}rlik-{S}olomon model for hypersurface arrangements.
\newblock {\em Annales de l'institut Fourier 65}, 6 (2016), 2507--2545.

\bibitem{Dupb}
{\sc Dupont, C.}
\newblock Purity, formality, and arrangement complements.
\newblock {\em Int. Math. Res. Not. IMRN}, 13 (2016), 4132--4144.

\bibitem{DADDERIO}
{\sc {D’Adderio}, M., and Moci, L.}
\newblock Arithmetic matroids, the tutte polynomial and toric arrangements.
\newblock {\em Advances in Mathematics 232}, 1 (2013), 335 -- 367.

\bibitem{eisenbud1996}
{\sc Eisenbud, D., and Sturmfels, B.}
\newblock Binomial ideals.
\newblock {\em Duke Math. J. 84}, 1 (07 1996), 1--45.

\bibitem{Goresky-MP}
{\sc {Goresky}, M., and {MacPherson}, R.}
\newblock {\em Stratified Morse Theory}, 1~ed.
\newblock Ergebnisse der Mathematik und ihrer Grenzgebiete 14. Springer-Verlag
  Berlin Heidelberg, 1988.

\bibitem{Lenz}
{\sc {Lenz}, M.}
\newblock {Representations of weakly multiplicative arithmetic matroids are
  unique}.
\newblock {\em ArXiv e-prints: 1704.08607\/} (June 2017).

\bibitem{Looijenga}
{\sc Looijenga, E.}
\newblock {Cohomology of $\mathcal{M}_3$ and $\mathcal{M}^1_3$}.
\newblock In {\em Mapping class groups and moduli spaces of Riemann surfaces
  (G\" ottingen, 1991/Seattle, WA, 1991)\/} (1993), vol.~150 of {\em
  Contemporary Mathematics}, Amererican Mathematical Society, Providence, RI,
  p.~205–228.

\bibitem{MPP91}
{\sc MacPherson, R., and Procesi, C.}
\newblock Making conical compactifications wonderful.
\newblock {\em Selecta Math. (N.S.) 4}, 1 (1998), 125--139.

\bibitem{MociTutte2012}
{\sc Moci, L.}
\newblock A {T}utte polynomial for toric arrangements.
\newblock {\em Trans. Amer. Math. Soc. 364}, 2 (2012), 1067--1088.

\bibitem{Moci12}
{\sc Moci, L.}
\newblock Wonderful models for toric arrangements.
\newblock {\em Int. Math. Res. Not. IMRN}, 1 (2012), 213--238.

\bibitem{Settepanella}
{\sc Moci, L., and Settepanella, S.}
\newblock The homotopy type of toric arrangements.
\newblock {\em Journal of Pure and Applied Algebra 215}, 8 (2011), 1980 --
  1989.

\bibitem{Orlik-Solomon}
{\sc Orlik, P., and Solomon, L.}
\newblock Combinatorics and topology of complements of hyperplanes.
\newblock {\em Inventiones mathematicae 56\/} (1980), 167--190.

\bibitem{orlik-terao}
{\sc {Orlik}, P., and {Terao}, H.}
\newblock {\em Arrangements of Hyperplanes}, 1~ed.
\newblock Grundlehren der mathematischen Wissenschaften 300. Springer-Verlag
  Berlin Heidelberg, 1992.

\bibitem{OxleyBook}
{\sc Oxley, J.}
\newblock {\em Matroid theory}, second~ed., vol.~21 of {\em Oxford Graduate
  Texts in Mathematics}.
\newblock Oxford University Press, Oxford, 2011.

\bibitem{Randell}
{\sc Randell, R.}
\newblock Lattice-isotopic arrangements are topologically isomorphic.
\newblock {\em Proceedings of the American Mathematical Society 107}, 2 (1989),
  555--559.

\end{thebibliography}
\bibliographystyle{acm}
\end{document}